\newcommand\norm[1]{\left\lVert#1\right\rVert}
\newcommand\restr[2]{{
		\left.\kern-\nulldelimiterspace 
		#1 
		\vphantom{\big|} 
		\right|_{#2} 
}}
\let\save@mathaccent\mathaccent
\newcommand*\if@single[3]{%
	\setbox0\hbox{${\mathaccent"0362{#1}}^H$}%
	\setbox2\hbox{${\mathaccent"0362{\kern0pt#1}}^H$}%
	\ifdim\ht0=\ht2 #3\else #2\fi
}
\newcommand*\rel@kern[1]{\kern#1\dimexpr\macc@kerna}
\newcommand*\widebar[1]{\@ifnextchar^{{\wide@bar{#1}{0}}}{\wide@bar{#1}{1}}}
\newcommand*\wide@bar[2]{\if@single{#1}{\wide@bar@{#1}{#2}{1}}{\wide@bar@{#1}{#2}{2}}}
\newcommand*\wide@bar@[3]{%
	\begingroup
	\def\mathaccent##1##2{%
		\let\mathaccent\save@mathaccent
		\if#32 \let\macc@nucleus\first@char \fi
		\setbox\z@\hbox{$\macc@style{\macc@nucleus}_{}$}%
		\setbox\tw@\hbox{$\macc@style{\macc@nucleus}{}_{}$}%
		\dimen@\wd\tw@
		\advance\dimen@-\wd\z@
		\divide\dimen@ 3
		\@tempdima\wd\tw@
		\advance\@tempdima-\scriptspace
		\divide\@tempdima 10
		\advance\dimen@-\@tempdima
		\ifdim\dimen@>\z@ \dimen@0pt\fi
		\rel@kern{0.6}\kern-\dimen@
		\if#31
		\overline{\rel@kern{-0.6}\kern\dimen@\macc@nucleus\rel@kern{0.4}\kern\dimen@}%
		\advance\dimen@0.4\dimexpr\macc@kerna
		\let\final@kern#2%
		\ifdim\dimen@<\z@ \let\final@kern1\fi
		\if\final@kern1 \kern-\dimen@\fi
		\else
		\overline{\rel@kern{-0.6}\kern\dimen@#1}%
		\fi
	}%
	\macc@depth\@ne
	\let\math@bgroup\@empty \let\math@egroup\macc@set@skewchar
	\mathsurround\z@ \frozen@everymath{\mathgroup\macc@group\relax}%
	\macc@set@skewchar\relax
	\let\mathaccentV\macc@nested@a
	\if#31
	\macc@nested@a\relax111{#1}%
	\else
	\def\gobble@till@marker##1\endmarker{}%
	\futurelet\first@char\gobble@till@marker#1\endmarker
	\ifcat\noexpand\first@char A\else
	\def\first@char{}%
	\fi
	\macc@nested@a\relax111{\first@char}%
	\fi
	\endgroup
}
\newcommand{\dd}{\mathrm{d}}
\newcommand{\upperRomannumeral}[1]{\uppercase\expandafter{\romannumeral#1}}
\newcommand{\lowerRomannumeral}[1]{\lowercase\expandafter{\romannumeral#1}}
\theoremstyle{plain}
\newtheorem{Condition}{Condition}
\newtheorem{hyp}{Hypothesis}
\newtheorem{theorem}{Theorem}
\newtheorem{lemma}[theorem]{Lemma}
\newtheorem*{lemma*}{Lemma}
\newtheorem{corollary}[theorem]{Corollary}
\newtheorem{proposition}[theorem]{Proposition}
\newtheorem{rem}{Remark}
\theoremstyle{definition}
\def\ps@pprintTitle{%
	\let\@oddhead\@empty
	\let\@evenhead\@empty
	\def\@oddfoot{\footnotesize\itshape
		\ifx\@empty\@empty
		\else\@journal\fi\hfill\today}%
	\let\@evenfoot\@oddfoot	
}
\begin{document}
	 \begin{frontmatter}
	 	\title{Affine Volterra processes with jumps\tnoteref{t1}}
		\tnotetext[t1]{The research of Sergio Pulido benefited from the financial support of the chairs ``Deep finance \& Statistics'' and ``Machine Learning \& systematic methods in finance'' of \'Ecole Polytechnique. Giulia Livieri and Sergio Pulido acknowledge support by the Europlace Institute of Finance (EIF) and the Labex Louis Bachelier, research project: ``The impact of information on financial markets''. }
	 	\author[1]{Alessandro Bondi%
	 	}
	 	\ead{alessandro.bondi@sns.it}
	 	\author[2]{Giulia Livieri%
	 	}
	 	\ead{giulia.livieri@sns.it}
	 	\author[3]{Sergio Pulido%
	 	}
	 	\ead{sergio.pulidonino@ensiie.fr}
	 	\address[1]{Classe di Scienze, Scuola Normale Superiore di Pisa}
	 	\address[2]{Classe di Scienze, Scuola Normale Superiore di Pisa}
	 	\address[3]{Universit\'e Paris--Saclay, CNRS, ENSIIE, Univ \'Evry, Laboratoire de Math\'ematiques et Mod\'elisation d'\'Evry (LaMME)}
	 		\begin{abstract}
	 		The theory of affine processes has been recently extended to the framework of stochastic Volterra equations with continuous trajectories. These so--called affine Volterra processes overcome modeling shortcomings of affine processes because they can have trajectories whose regularity is different from the regularity of the paths of Brownian motion. More specifically, singular kernels yield rough affine processes. This paper extends the theory by considering affine stochastic Volterra equations with jumps. This extension is not straightforward because the jump structure together with possible singularities of the kernel may induce explosions of the trajectories. 
	 		\\[2ex] 
	 		\noindent{\textbf {Keywords:} affine processes, affine Volterra processes, stochastic Volterra equations, Riccati--Volterra equations, rough volatility.}
	 		\\[2ex]
	 		\noindent{\textbf {MSC2020 classifications:} 60H20 (primary), 60G22, 45D05, 60G17 (secondary).}
	 	\end{abstract}
	 \end{frontmatter}

	\section{Introduction}\label{sec:intro}
	Affine stochastic processes constitute unquestionably the most popular multi--factor framework to model rich and flexible stochastic dependence structures.
	Semi--explicit formulas for the Fourier--Laplace transform of affine processes make them numerically tractable since Fourier transform--based methods can be used to perform fast calculations. 
	
	We recall that a square--integrable conservative regular affine process $X$ with state space $E\subset \mathbb{R}^m$ is a special semimartingale whose semimartingale characteristics  $(B,C,\nu)$, with respect to the ``truncation function'' $h(\xi)=\xi$, are of the form
	\begin{equation}\label{eq:BCnu}
		B_t=\int_0^t b(X_s)\,\dd s,\quad C_t=\int_0^t a(X_s)\,\dd s,\quad \nu(\dd t,\dd\xi)=\eta(X_{t},\dd\xi)\,\dd t
	\end{equation}
	where
	\begin{equation}\label{eq:defabeta}
		b(x)=b_0+\sum_{k=1}^m x_kb_k,\quad
		a(x)=A_0+\sum_{k=1}^m x_kA_k,\quad
		\eta(x,\dd\xi)=\nu_0(\dd\xi)+\sum_{k=1}^m x_k\nu_k(\dd\xi),\quad x\in E.
	\end{equation}
	In \eqref{eq:defabeta} we take $A_k\in\mathbb{R}^{m\times m}$, $b_k\in\mathbb{R}^m$, and $\nu_k(\dd\xi)$ signed measures on $\mathbb{R}^m$ such that $\nu_k(\{0\})=0$ and $\int_{\mathbb{R}^m}|\xi|^2|\nu_k|(\dd\xi)<\infty$. Additional conditions on the parameters $A_k$, $b_k$ and $\nu_k$ have to imposed in order to guarantee existence and invariance results depending on the state space $E$. See for instance \cite{DFS} for $E=\mathbb{R}_+^k\times\mathbb{R}^l$ and \cite{CFMT} for $E$ equal to the space of positive semidefinite matrices.
	
	The Fourier--Laplace transform of such an affine process $X$ is given by 
	\begin{equation}\label{eq:FourierLaplaceclassical}
		\mathbb{E}\left[\exp\left(\int_0^T f(T-s)^\top  X_s\,\dd s\right)\bigg |\mathcal{F}_t\right]=\exp\left(\phi(T-t)+\int_0^t f(T-s)^\top  X_s\,\dd s+ \psi(T-t)^\top X_t\right)
	\end{equation}
	with  $\psi$ a $\mathbb{C}^m-$valued function that solves the Riccati equation
	\begin{equation}\label{eq:Riccati}
		\psi(t)=\int_0^t F\left(s,\psi(s)\right)\dd s
	\end{equation}
	where
	\begin{equation}\label{eq:F}
		F_k\left(s,z\right)=f_k\left(s\right)+\frac12z^\top A_k z+ z^\top b_k  + \int_{\mathbb{R}^m}\left({\rm e}^{ z^\top \xi}-1-z^\top \xi \right)\nu_k(\dd\xi),\quad k=1,\ldots,m,
	\end{equation}
	and $\phi$ the $\mathbb{C}-$valued function given by 
	\begin{equation}\label{eq:phiRiccati}
		\phi(t)=\int_0^t \left(\psi(s)^\top b_0 + \frac12\psi(s)^\top A_0 \psi(s) + \int_{\mathbb{R}^m}\left({\rm e}^{ \psi(s)^\top \xi}-1-\psi(s)^\top \xi \right)\nu_0(\dd\xi)\right)\dd s.
	\end{equation}
	The identity \eqref{eq:FourierLaplaceclassical} is only valid under additional hypotheses on the $\mathbb{C}^m-$valued function $f$ and $t,T\ge 0$ that imply appropriate conditions on the functions $\phi$ and $\psi$. \footnote{One of those conditions could be for instance the boundness of the right term in \eqref{eq:FourierLaplaceclassical}. On a related subject, we also refer to \cite{KR} where the authors analyze the possible explosions of the associated Riccati equation \eqref{eq:Riccati}.}

	The theory of affine processes was recently extended in \cite{sergio,GK} to the framework of stochastic Volterra equations with continuous trajectories, where in general the semimartingale and Markov properties do not hold. These so--called affine Volterra processes overcome modeling shortcomings of affine processes because they can have trajectories whose regularity is different from the regularity of the paths of Brownian motion. More specifically, singular kernels yield rough processes in the spirit of \cite{BFG,gath,el}. The goal of this paper is to extend the results in \cite{sergio} by considering affine stochastic Volterra equations with jumps. This extension is not straightforward because the jump structure together with possible singularities of the kernel may induce explosions of the trajectories. 
	
	Our study can be motivated by financial models for stock volatility. In particular the observation in \cite{to} that a complete description of volatility should take into account both path roughness and jumps; see also \cite{xi} for an interesting discussion on the topic. In this paper, however, we concentrate on the mathematical properties of this family of processes and we address their possible applications in a separate article \cite{BLP}.
	
	We summarize in this introduction the framework and the main results of our study. Suppose that $X$ is a predictable solution to a stochastic Volterra equation of the form
	\begin{equation}\label{eq:SVE}
		X_t = g_0(t) + \int_{0}^t K(t-s)\,\dd Z_s,\quad \mathbb{P}\otimes\dd t\text{--a.e.}
	\end{equation}
	defined on a filtered probability space $(\Omega,\mathcal F,(\mathcal F_t)_{t\ge 0},\mathbb{P})$ satisfying the usual conditions and with trajectories in  $L^1_\text{loc}(\mathbb{R}_+;E)$ for some state space $E\subset \mathbb{R}^m$. In \eqref{eq:SVE} we take $g_0\in L^1_\text{loc}(\mathbb{R}_+;\mathbb{R}^m)$, $K\in L^2_\text{loc}(\mathbb{R}_+;\mathbb{R}^{m\times d})$ a matrix--valued kernel, and $Z$ a $d-$dimensional semimartingale whose characteristics depend on $X$. In order to have an affine structure we suppose that $Z$ has characteristics of the form \eqref{eq:BCnu}--\eqref{eq:defabeta}, with $A_k\in\mathbb{R}^{d\times d}$, $b_k\in\mathbb{R}^d$, and $\nu_k$ signed measures on $\mathbb{R}^d$ such that $\nu_k(\{0\})=0$ and $\int_{\mathbb{R}^d}|\xi|^2|\nu_k|(\dd\xi)<\infty$. In this case we call $X$ an affine Volterra process. When $E=\mathbb{R}^m$, existence of weak solutions to \eqref{eq:SVE} with trajectories in $L^2_\text{loc}(\mathbb{R}_+;\mathbb{R}^m)$ has been established in \cite[Theorem 1.2]{ACLP}. For $E=\mathbb{R}_+$ and for a Volterra CIR--type of process with positive jumps results in this direction can be found in \cite[Theorem  2.13]{edu}. 
	
	Section \ref{sec:prelim} contains more details about this setting as well as important results regarding stochastic convolutions with respect to processes with jumps. These results, which play a crucial role in our arguments, extend and are inspired by those in \cite{sergio} where the authors study stochastic convolutions with respect to processes with continuous trajectories.
	
	Fix $T>0$, $f\in C\left(\mathbb{R}_+;\mathbb{C}^m\right)$. By analogy with \eqref{eq:Riccati} assume that $\psi\in C(\mathbb{R}_+;\mathbb{C}^d)$ solves the Riccati--Volterra equation
	\begin{equation}\label{eq:RiccatiVolterra}
		\psi\left(t\right)^\top=\int_0^t F\left(s,\psi(s)\right)^\top K(t-s)\,\dd s,
	\end{equation} 
	with $F$ as in \eqref{eq:F}, and let $\phi$ be given by \eqref{eq:phiRiccati}.
	
	In Section \ref{sec:FLT} we show our first main result, namely Theorem \ref{t6}, which is a generalization of \cite[Theorem 4.3]{sergio} and provides a semi--explicit formula for the Fourier--Laplace transform of $X$. This theorem shows that under the above--mentioned framework if we define 
	\begin{equation}\label{eq:M}
		M_t=\exp\left(\phi(T-t)+\int_0^t f(T-s)^\top X_s\,\dd s+\int_t^T F\left(T-s,\psi(T-s)\right)^\top g_t\left(s\right)\dd s\right)
	\end{equation}
	where $\left(g_t\left(\cdot\right)\right)_{t\ge 0}$ denotes the adjusted forward process\footnote{This adjusted forward process was also used in \cite{AE} and \cite{KLP} to elucidate the affine structure of affine Volterra processes with continuous trajectories.}
	\begin{equation}\label{eq:fwdprocess}
		g_t(s)=g_0(s)+\int_0^t K\left(s-r\right)\,\dd Z_r,\quad s>t,
	\end{equation}
	then $M$ is a local martingale, and if  $M$ is a martingale then one has the exponential--affine formula
	\begin{equation}\label{eq:FourierLaplace}
		\mathbb{E}\left[\exp\left(\int_0^T f(T-s)^\top  X_s\,\dd s\right)\bigg |\mathcal{F}_t\right]=M_t.
	\end{equation}
	As a consequence, under these conditions, uniqueness in law holds for the stochastic Volterra equation \eqref{eq:SVE}.
	
	Section \ref{sec:past} contains our second main result which is Theorem \ref{main}. This theorem establishes, under the assumption $m=d$ and additional conditions on the kernel $K$, an alternative formula for the local martingale $M_t$ in \eqref{eq:M} in terms of $(X_s)_{s\le t}$ and $Z_t$ only, namely
	\begin{multline}\label{eq:past}
		\log\left(M_t\right) = \phi\left(T-t\right)+\int_0^t f\left(T-s\right)^\top X_s\,\dd s +\int_0^{T-t}F\left(s,\psi\left(s\right)\right)^\top g_0\left(T-s\right)\dd s \\
		+\psi\left(T-t\right)^\top Z_t +\left(\pi_{T-t}^\top \ast \left(X-g_0\right)\right)\left(t\right),
	\end{multline} 
	with $\phi$ as in \eqref{eq:phiRiccati} and $\pi_h\in L_\text{loc}^1(\mathbb{R}_+;\mathbb{C}^d),\,h>0,$ a deterministic function that depends on $K$ and $\psi$. This expression is a corollary of a similar expression for the adjusted forward process \eqref{eq:fwdprocess}, shown in Lemma \ref{l7}.  The identity \eqref{eq:past} can be used to show that \eqref{eq:FourierLaplaceclassical} is a particular instance of \eqref{eq:FourierLaplace} when  $g_0$ is constant, and $K$ is constant and equal to the identity matrix.

	In Section \ref{sec:example}, using our first two main results, we give a complete proof in Theorems \ref{tadd} and \ref{bound_t} of the exponential--affine formula \eqref{eq:FourierLaplace} in the particular case $m=d=1$, $E=\mathbb{R}_+$ and for a Volterra CIR--type processes with positive jumps. The argument hinges on a novel comparison result between solutions of Riccati--Volterra equations, namely between a solution of \eqref{eq:RiccatiVolterra} and a solution of an analogous equation in which the functions $\psi$ and $F$ are substituted with the corresponding real parts. This comparison result, together with the \textit{affine with respect to the past formula} \eqref{eq:past} of Theorem \ref{main}, yields the desired conclusion because we can bound the complex--valued local martingale $M$ \eqref{eq:M} of Theorem \ref{t6} with a real--valued martingale. \footnote{It is important to mention at this point the study in \cite{CT} where the authors construct infinite dimensional lifts of affine Volterra processes, possibly with jumps, and study affine transform formulas for these lifts. Even though these formulas are closely related to the affine transform formulas of the present study,  the novelty of our work stems from the \textit{affine with respect to the past formula} \eqref{eq:past} and the complete analysis of the associated Riccati equations in the one--dimensional case as it is carried out in Section \ref{sec:example}. Our approach is inspired by the arguments in \cite{sergio}, and extends them to a jump--processes framework.}
	
	\ref{ap_A} contains some basic results regarding the classical forward process and \ref{ap_B} results regarding the $1-$dimensional Riccati--Volterra equations appearing in Section \ref{sec:example}.
	\\[2ex]
	\noindent\textbf{Notation:} Throughout the paper, elements of $\mathbb{R}^k$ and $\mathbb{C}^k$ are column vectors. 
	Given a matrix $A\in\mathbb{C}^{k\times l},$ the element in row $i$ and column $j$ is $A^{ij}$, $A^\top\in\mathbb{C}^{l\times k}$ is its transpose matrix, and $\left|A\right|$ is the Frobenius norm.  
	We also use the notation $\mathbb{R}^k_+=\left\{x\in\mathbb{R}^k : x_i\ge0,\,i=1,\dots,k\right\}$ and $\mathbb{C}^k_-=\left\{x\in\mathbb{C}^k : \mathfrak{R}\left(x_i\right)\le0,\,i=1,\dots,k\right\}$, where for $z\in\mathbb{C}$, $\mathfrak{R}(z)$ denotes its real part. The imaginary part of a complex number $z$ is $\mathfrak{Im}z$. We use the convolution notation $\left(f\ast g\right)\left(t\right)=\int_0^t f\left(t-s\right)\,g\left(s\right)\dd s$ for functions $f,g$.

	\section{Preliminaries}\label{sec:prelim}
	Fix $d,m\in\mathbb{N}$. Let $g_0\in L^1_{\text{loc}}\left(\mathbb{R}_+;\mathbb{R}^{m}\right)$, $K\in L^2_{\text{loc}}\left(\mathbb{R}_+;\mathbb{R}^{m\times d}\right)$ be a matrix--valued kernel and $E\subset \mathbb{R}^m$ be a subset which will be the state--space that we consider. We also introduce  a \emph{characteristic triplet} $\left(b,a,\eta\right)$ consisting of the measurable maps $b\colon \mathbb{R}^m\to \mathbb{R}^d$, $a\colon\mathbb{R}^m\to\mathbb{R}^{d\times d}$ and the transition kernel $\eta\left(x,\dd \xi\right)$ from $\mathbb{R}^m$ to $\mathbb{R}^d$.
	We require this triplet to be affine on $E$, meaning that, for every $x\in E$, 
	\begin{equation}\label{aff}
		b\left(x\right)= b_0+\sum_{k=1}^{m}x_kb_k,\qquad  a\left(x\right)= A_0+\sum_{k=1}^{m}x_kA_k,\qquad \eta\left(x,\dd \xi\right)=\nu_0\left(\dd \xi\right)+\sum_{k=1}^{m}x_k\nu_k\left(\dd \xi\right).
	\end{equation}
	Here $b_0,\,b_1,\dots,\,b_{m}\in\mathbb{R}^d$, $A_0,\,A_1,\dots,\,A_m\in\mathbb{R}^{d\times d}$, and   $\left(\nu_k\right)_{k=0,\dots,m}$ are signed measures on $\mathbb{R}^d$ such that $\int_{\mathbb{R}^d}\left|\xi\right|^2\left|\nu_{k}\right|\left(\dd \xi\right)<\infty$, with $\nu_{k}\left(\left\{0\right\}\right)=0$.
	Throughout the paper, we denote by $X=\left(X_t\right)_{t\ge0}$ a
	predictable process  with trajectories  in $L^1_{\text{loc}}\left(\mathbb{R}_+;\mathbb{R}^{m}\right)$  and such that $X\in E,\,\mathbb{P}\otimes \dd t-$a.e. It is defined on a filtered probability space $\left(\Omega, \mathcal{F},\mathbb{F},\mathbb{P}\right)$ where the filtration $\mathbb{F}=\left(\mathcal{F}_t\right)_{t\ge0}$ satisfies the \emph{usual conditions} and $\mathcal{F}_0$ is the trivial $\sigma-$algebra on $\Omega$. Moreover, 
	we assume that $X$ solves the following affine stochastic Volterra equation of convolution type
	\begin{equation}\label{f}
		X_t=g_0\left(t\right)+\int_{0}^{t}K\left(t-s\right)\dd Z_s,\quad \mathbb{P}-\text{a.s., for a.e. }t\in\mathbb{R}_+.
	\end{equation}
	Here $Z$ is a $d-$dimensional semimartingale starting at $0$ whose differential characteristics  with respect to the Lebesgue measure are $\left(b\left(X_t\right),a\left(X_t\right),{\eta}\left(X_t,\dd \xi\right)\right),\,t\ge0.$ These characteristics are taken with respect to the ``truncation function'' $h\left(\xi\right)=\xi,\,\xi\in\mathbb{R}^d$, which can be chosen because $Z$ is a special semimartingale due to \cite[Proposition $2.29$, Chapter \upperRomannumeral{2}]{js} and the local integrability of the trajectories of $X$. In the sequel, we denote by $\mu\left(\dd t,\dd \xi\right)$ the measure associated with the jumps of $Z$ and by $\nu\left(\dd t,\dd \xi\right)=\eta\left(X_t,\dd \xi\right)\dd t$ its compensator.
	
	It is worth discussing the good definition of the stochastic integral in \eqref{f}.	Recalling that $X\in E,\,\mathbb{P}\otimes \dd t-$a.e., the {canonical representation theorem for semimartingales} (see \cite[Proposition $2.34$, Chapter \upperRomannumeral{2}]{js}) shows that  $Z$ admits the decomposition 
	\[
	Z_t=\int_{0}^{t}b\left(X_s\right)\dd s+M^c_t+M^d_t=
	b_0t+\sum_{k=1}^{d}b_k\int_{0}^{t}X_{k,s}\,\dd s+M^c_t+M^d_t,\quad t\ge0,\,\mathbb{P-}\text{a.s.},
	\] 
	where  $\dd M^d_t=\int_{\mathbb{R}^d}\xi\left({\mu}-{\nu}\right)\left(\dd t,\dd \xi\right)$ is an $\mathbb{R}^d-$valued, purely discontinuous local martingale and $M^c$ is a $d-$dimensional, continuous local martingale satisfying $\dd\left\langle M^c,M^c\right\rangle_t=a\left(X_t\right)\dd t$. Now if we introduce, for every $j=1,\dots,d$, the increasing process $C^j_t=\int_{0}^{t}\int_{\mathbb{R}^d}\left|\xi_j\right|^2{\nu}\left(\dd s,\dd \xi\right),\,t\ge0$,
	then we have
	\[ 
	C^j_t=
	\left(\int_{\mathbb{R}^d}\left|\xi_j\right|^2\nu_0\left(\dd \xi\right)\right)t
	+\sum_{k=1}^m\int_{\mathbb{R}^d}\left|\xi_j\right|^2\nu_k\left(\dd\xi\right)
	\left(\int_{0}^{t}X_{k,s} \,\dd s\right),\quad t\ge0,\,\mathbb{P}-\text{a.s.}
	\]
	As a consequence of this expression, the local integrability of the paths of $X$ implies that $C^j$ is locally integrable. 
	Hence \cite[Theorem $1.33$ (a), Chapter \upperRomannumeral{2}]{js} yields that $M^d$ is a locally square--integrable martingale  with
	\begin{equation}\label{pred_q}
		\dd\left\langle M^{d}_j,M^{d}_j\right\rangle_t=
		\left[\int_{\mathbb{R}^d}\left|\xi_j\right|^2\nu_0\left(\dd\xi\right)+
		\sum_{k=1}^m\left(\int_{\mathbb{R}^d}\left|\xi_j\right|^2\nu_k\left(\dd \xi\right)\right)X_{k,t}
		\right]\dd t,
	\end{equation} 
	where $M^{d}_j$ is the $j-$th component of $M^d,\,j=1,\dots,d.$
	It is convenient to introduce the locally square--integrable martingale $\widetilde{Z}= M^c+M^d$, which satisfies \begin{equation}\label{Z}
		\widetilde{Z}_t=Z_t-\int_{0}^{t}b\left(X_s\right)\,\dd s=Z_t-b_0t-\sum_{k=1}^{d}b_k\int_{0}^{t}X_{k,s}\,\dd s,\quad t\ge0, \,\mathbb{P}-\text{a.s.}
	\end{equation}
	
	Given an integer $l\in\mathbb{N}$
	and $F\in L^2_{\text{loc}}\left(\mathbb{R}_+;\mathbb{R}^{l \times d}\right)$, we define the $l-$dimensional random variable 
	\[
	\left(F\ast \dd \widetilde{Z}\right)_T=\left(F\ast \dd M^c\right)_T+\left(F\ast \dd M^d\right)_T= \int_{0}^{T}F\left(T-s\right)\dd M^c_s+\int_{0}^{T}F\left(T-s\right)\dd M^d_s.
	\]
	This is well--defined for a.e. $T\in\mathbb{R}_+$. Indeed, consider the stopping times $\tau_n=\inf\left\{t\ge0 : \int_{0}^{t}\left|X_s\right|\dd s>n\right\}$ for all $n\in\mathbb{N}$. Since $X_\cdot\left(\omega\right)\in L^1_\text{loc}\left(\mathbb{R}_+,\mathbb{R}^m\right)$, $\tau_n\to \infty$ as $n\to\infty$ in $\Omega$. Then for every $\widebar{T}>0$, we can apply the Young's type inequality in \cite[Lemma A$.1$]{ACLP} with $p=q=r=1$ 	and Tonelli's theorem to deduce that
	\begin{multline*}
		\int_{0}^{\widebar{T}}\left(\mathbb{E}\left[\int_{0}^{T\wedge \tau_n}\left|F\left(T-s\right)\right|^2\left|X_{k,s}\right|\dd s\right]\right)\dd T=
		\int_{0}^{\widebar{T}}\left(\int_{0}^{T}\left|F\left(T-s\right)\right|^2\mathbb{E}\left[1_{\left\{s\le \tau_n\right\}}\left|X_{k,s}\right|\right]\dd s\right)\dd T\\
		\le 
		\norm{F}^2_{L^2\left(\left[0,\widebar{T}\right];\mathbb{R}^{l\times d}\right)}\mathbb{E}\left[\int_0^{\widebar{T}\wedge \tau_n}\left|X_{k,s}\right|\dd s\right]\le n\norm{F}^2_{L^2\left(\left[0,\widebar{T}\right];\mathbb{R}^{l\times d}\right)} <\infty,\quad k=1,\dots,m.
	\end{multline*}
	This ensures that $\mathbb{E}\left[\int_{0}^{T\wedge \tau_n}\left|F\left(T-s\right)\right|^2X_{k,s}\,\dd s\right]<\infty,\,k=1,\dots,m,\,n\in\mathbb{N}$, for a.e. $T\in \mathbb{R}_+$, say for every $T\in\mathbb{R}_+\setminus N$, where $N\subset \mathbb{R}_+$ is a $\dd t-$null set. As a consequence, it is straightforward to conclude that the processes 
	\begin{equation}\label{mart}
		\left(\int_0^tF\left(T-s\right)\dd M^c_s\right)_{t\in\left[0,T\right]},\qquad \left(\int_0^tF\left(T-s\right)\dd M^d_s\right)_{t\in\left[0,T\right]},
	\end{equation}
	are locally square--integrable martingales for every $T\in\mathbb{R}_+\setminus N$. Indeed, for every $n\in\mathbb{N},$
	\begin{equation*}
		\sum_{j=1}^d\mathbb{E}\left[\int_{0}^{T\wedge\tau_n}\left|F\left(T-s\right)\right|^2\dd\left\langle M^{c}_j,M^{c}_j\right\rangle_s\right]=
		\sum_{j=1}^d\mathbb{E}\left[\int_{0}^{T\wedge\tau_n}\left|F\left(T-s\right)\right|^2\left(A_0^{jj}+\sum_{k=1}^{m}X_{k,s}A_k^{jj}\right)\dd s\right]
		<\infty,
	\end{equation*}
	and (by \eqref{pred_q})
	\begin{multline*}
		\sum_{j=1}^d\mathbb{E}\left[\int_{0}^{T \wedge\tau_n}\left|F\left(T-s\right)\right|^2\dd\left\langle M^{d}_j,M^{d}_j\right\rangle_s\right]\\
		=
		\sum_{j=1}^d\mathbb{E}\left[\int_{0}^{T\wedge\tau_n}\left|F\left(T-s\right)\right|^2\left(\int_{\mathbb{R}^d}\left|\xi_j\right|^2\nu_0\left(\dd\xi\right)+
		\sum_{k=1}^mX_{k,s}\int_{\mathbb{R}^d}\left|\xi_j\right|^2\nu_k\left(\dd\xi\right)\right)\dd s\right]<\infty.
	\end{multline*}
	We always work with a jointly measurable version of the stochastic convolution $F\ast \dd \widetilde{Z}$ defined on $\Omega\times\mathbb{R}_+$ (such a modification exists, see, e.g., \cite[Theorem $3.5$]{r}).\\
	As for the convolution of $F$ with the drift part of $Z$, using \cite[Theorem $2.2$\,(\lowerRomannumeral{1}), Chapter $2$]{g} we compute
	\begin{align*}
		\begin{split}
			&\mathbb{E}\left[\int_{0}^{T}\left(\int_{0}^{T}1_{\left\{t\le \tau_n\right\}}1_{\left\{s\le t\right\}}\left|F\left(t-s\right)\right|\left(\left|b_0\right|+\sum_{k=1}^m\left|b_k\right|\left|X_{k,s}\right|\right)\dd s\right)\dd t\right]
			\\
			&=\mathbb{E}\left[\int_{0}^{T\wedge \tau_n}\left(\int_{0}^{t}\left|F\left(t-s\right)\right|\left(\left|b_0\right|+\sum_{k=1}^m\left|b_k\right|\left|X_{k,s}\right|\right)\dd s\right)\dd t\right]
			\\
			&\le\norm{F}_{L^1\left(\left[0,T\right];\mathbb{R}^{l\times d}\right)}\left[\left|b_0\right|T+n\left(\sum_{k=1}^m\left|b_k\right|\right)\right]<\infty,\quad T>0.
		\end{split}	
	\end{align*}
	This shows that there exists a $\mathbb{P}\otimes \dd t-$null set $N_1\subset \Omega\times \mathbb{R}_+$ such that the next expression is well--defined
	\[
	1_{\left\{t\le \tau_n\left(\omega\right)\right\}}\int_{0}^{t}F\left(t-s\right)\left(b_0+\sum_{k=1}^mb_kX_{k,s}\left(\omega\right)\right)\dd s,\quad n\in\mathbb{N},\,\left(\omega,t\right)\in \left(\Omega\times \mathbb{R}_+\right)\setminus N_1.
	\]
	Moreover, by Fubini's theorem the resulting processes are jointly measurable in $\left(\Omega\times \mathbb{R}_+\right)\setminus N_1$, hence passing to the limit as $n\to\infty$, we obtain the jointly measurable process $\int_{0}^tF\left(t-s\right)\left(b_0+\sum_{k=1}^mb_kX_{k,s}\right)\dd s$ (defined on the same set). Finally we introduce
	\[
	\left(F\ast b\left(X\right)\right)\left(\omega,t\right)=\begin{cases}
		\int_{0}^tF\left(t-s\right)\left(b_0+\sum_{k=1}^mb_kX_{k,s}\left(\omega\right)\right)\dd s,&\left(\omega,t\right)\in\left(\Omega\times \mathbb{R}_+\right)\setminus N_1,\\
		0,&\left(\omega,t\right)\in N_1.\\
	\end{cases}
	\] 
	This is a jointly measurable process defined on the whole $\Omega\times \mathbb{R}_+.$ This machinery for constructing jointly measurable modifications of given processes will be used several times in the sequel.\\
	Overall, the previous argument proves that the integral on the right side of \eqref{f} is well--defined $\mathbb{P}-$a.s., for a.e. $t\in\mathbb{R}_+$.  Of course we denote by $\left(F\ast \dd Z\right)= \left(F\ast b\left(X\right)\right)+\left(F\ast \dd \widetilde{Z}\right)$; with this notation, Equation \eqref{f} can be written as follows
	\begin{equation}\label{2}
		X=g_0+\left(K\ast \dd Z\right)=g_0+\left(K\ast b\left(X\right)\right)+\left(K\ast \dd \widetilde{Z}\right),\quad \mathbb{P}\otimes \dd t-\text{a.e.}
	\end{equation}
	
The following lemma will be useful in the sequel.
	\begin{lemma}
		For every $T>0$, 
		\begin{equation}\label{1.4}
			\mathbb{E}\left[\norm{X}_{L^1\left(\left[0,T\right];\mathbb{R}^m\right)}\right]<\infty.		
		\end{equation}
	\end{lemma}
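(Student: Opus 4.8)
The plan is to combine the localizing sequence $\tau_n=\inf\{t\ge0:\int_0^t|X_s|\,\dd s>n\}$ introduced above with a Gronwall-type estimate performed in an exponentially weighted $L^1$-norm. Since $X_\cdot(\omega)\in L^1_{\mathrm{loc}}(\mathbb R_+;\mathbb R^m)$ we have $\tau_n\uparrow\infty$ in $\Omega$, and since $\int_0^{t\wedge\tau_n}|X_s|\,\dd s\le n$ the function $v_n(t):=\mathbb E\big[1_{\{t\le\tau_n\}}|X_t|\big]$ is finite for a.e.\ $t$, with $\int_0^T v_n(t)\,\dd t\le n$. Because $1_{\{t\le\tau_n\}}|X_t|\uparrow|X_t|$ as $n\to\infty$, $\mathbb P\otimes\dd t$-a.e., monotone convergence gives $\mathbb E\big[\norm{X}_{L^1([0,T];\mathbb R^m)}\big]=\sup_n\int_0^T v_n(t)\,\dd t$, so it suffices to bound $\int_0^T v_n(t)\,\dd t$ uniformly in $n$.

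Then I would derive a scalar convolution inequality for $v_n$. Taking moduli in \eqref{2}, multiplying by $1_{\{t\le\tau_n\}}$, using the bound $|b(x)|\le|b_0|+C_b|x|$ coming from \eqref{aff} and the inclusion $\{s\le\tau_n\}\subseteq\{r\le\tau_n\}$ valid for $r\le s$, one obtains, for a.e.\ $t\in[0,T]$,
\[
v_n(t)\le|g_0(t)|+|b_0|\,(|K|\ast1)(t)+C_b\,(|K|\ast v_n)(t)+\mathbb E\big[1_{\{t\le\tau_n\}}\,\big|(K\ast\dd\widetilde Z)_t\big|\big].
\]
To control the stochastic term, fix such a $t$ and consider the locally square-integrable martingale $u\mapsto N^t_u:=\int_0^u K(t-s)\,\dd\widetilde Z_s$ on $[0,t]$; then $(K\ast\dd\widetilde Z)_t=N^t_t$ and $1_{\{t\le\tau_n\}}N^t_t=1_{\{t\le\tau_n\}}N^t_{t\wedge\tau_n}$. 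Writing $\widetilde Z=M^c+M^d$, invoking $\dd\langle M^c\rangle_t=a(X_t)\,\dd t$ and \eqref{pred_q}, and using that the positive semidefinite matrix $\dd\langle\widetilde Z\rangle_s$ satisfies $\mathrm{tr}\,\dd\langle\widetilde Z\rangle_s\le(C_0+C_1|X_s|)\,\dd s$ for finite constants $C_0,C_1$ depending only on the $A_k,\nu_k$, one gets $\mathbb E[\langle N^t\rangle_{t\wedge\tau_n}]<\infty$ for a.e.\ $t$. Hence $N^t_{\cdot\wedge\tau_n}$ is a genuine square-integrable martingale and the It\^o isometry together with the Cauchy--Schwarz inequality give
\[
\mathbb E\big[1_{\{t\le\tau_n\}}\,\big|(K\ast\dd\widetilde Z)_t\big|\big]\le\Big(\big(C_0\,|K|^2\ast1\big)(t)+C_1\,\big(|K|^2\ast v_n\big)(t)\Big)^{1/2}.
\]

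Finally I would close the estimate in the weighted norm $\norm{w}_\lambda:=\int_0^T e^{-\lambda t}|w(t)|\,\dd t$, $\lambda>0$. Multiplying the inequality for $v_n$ by $e^{-\lambda t}$, integrating over $[0,T]$, and applying the Young-type inequality of \cite[Lemma~A.1]{ACLP} — which yields $\norm{|K|\ast w}_\lambda\le\kappa(\lambda)\,\norm{w}_\lambda$ with $\kappa(\lambda):=\int_0^T e^{-\lambda s}|K(s)|\,\dd s$, and $\norm{|K|^2\ast w}_{L^1([0,T])}\le\norm{K}^2_{L^2([0,T])}\norm{w}_{L^1([0,T])}$ — as well as the Cauchy--Schwarz inequality to the square-root term, one arrives at
\[
\norm{v_n}_\lambda\le A'+C_b\,\kappa(\lambda)\,\norm{v_n}_\lambda+C'\,\norm{v_n}_\lambda^{1/2},
\]
with $A',C'<\infty$ independent of $n$. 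Since $\kappa(\lambda)\to0$ as $\lambda\to\infty$ by dominated convergence, fix $\lambda$ so that $C_b\,\kappa(\lambda)\le\tfrac12$; the ensuing quadratic inequality in $\norm{v_n}_\lambda^{1/2}$ produces a bound $\norm{v_n}_\lambda\le\Gamma$ independent of $n$, whence $\int_0^T v_n(t)\,\dd t\le e^{\lambda T}\Gamma$ for every $n$, which is exactly what we need. I expect this last step to be the main obstacle: since $\norm{K}_{L^1([0,T])}$ need not be small, a naive Gronwall argument does not close, and one must absorb the convolution term either by iterating over short subintervals or, as above, by switching to an exponentially weighted norm — while at the same time controlling the stochastic convolution, whose natural estimate only delivers the square root of a convolution and therefore forces the extra Cauchy--Schwarz step and the quadratic inequality at the end.
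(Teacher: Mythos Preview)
Your argument is correct and follows essentially the same route the paper indicates by pointing to \cite[Theorem~1.4]{ACLP}: localize with $\tau_n$, derive a scalar convolution inequality for $v_n$ using the affine structure \eqref{aff} and the It\^o isometry on the stochastic convolution, then close in an exponentially weighted $L^1$-norm so that the linear convolution term can be absorbed and the square-root term is handled by a quadratic inequality. The only cosmetic point is that your Cauchy--Schwarz step naturally produces $\norm{v_n}_{L^1}^{1/2}$ rather than $\norm{v_n}_\lambda^{1/2}$, but since $\norm{v_n}_{L^1}\le e^{\lambda T}\norm{v_n}_\lambda$ for the fixed $\lambda$ this is immaterial.
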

	\begin{proof}
		The proof follows the same steps as those in \cite[Theorem $1.4$]{ACLP}. The difference is that the affine structure of our model guaranteed by \eqref{aff} is substituted for \cite[Condition $\left(1.5\right)$]{ACLP}, and makes  the $L^1_\text{loc}-$integrability of the paths of $X$ sufficient (instead of the $L^p_\text{loc}-$integrability, $p\ge2$ required in \cite{ACLP}). 
	\end{proof}
	Knowing the additional property in \eqref{1.4}, the same argument as the one above (without stopping times) shows that the processes in \eqref{mart} are indeed square--integrable martingales for a.e. $T\in\mathbb{R}_+$.
	\begin{rem}
		We refer to \cite{ACLP} for a general solution theory concerning equations of the type in \eqref{2} when $g_0\in L^p_{\text{loc}}\left(\mathbb{R}_+;\mathbb{R}^m\right),\,p\ge 2,$ and $E=\mathbb{R}^m$.\\
		In the case $m=d=1$ and $E=\mathbb{R}_+$, if one  defines $Y_t=\int_{0}^{t}X_s\,\dd s,\,t\ge0$, then $Y=\left(Y_t\right)_{t\ge 0}$ is a nondecreasing process and an application of \cite[Lemma $3.2$]{ACLP} shows 
		\[
		Y_t=\int_{0}^{t}g_0\left(s\right)\dd s+\int_{0}^{t}K\left(t-s\right)Z_s\,\dd s=\int_{0}^{t}g_0\left(s\right)\dd s+\left(K\ast Z\right)_t,\quad t\ge0,\, \mathbb{P}-\text{a.s.}
		\]
		This type of stochastic Volterra equations is analyzed in \cite{edu} for locally integrable kernels $K\in L^1_\emph{loc}\left(\mathbb{R}_+;\mathbb{R}\right).$
	\end{rem}
	\subsection{Stochastic convolution for processes with jumps}
	The goal of this subsection is to develop technical results concerning the stochastic convolution. In particular, we aim to make Lemma $2.1$ and Lemma $2.6$ in \cite{sergio} feasible in our context, where we are dealing with discontinuities for $Z$ and, more importantly, with a process $X$ which a priori is not bounded. This requires the statements and the proofs of the aforementioned results --crucial for the development of the theory-- to be changed. Such changes are important from a conceptual point of view and after every result we add a remark showing the parallel with the setting in \cite{sergio}.
	
	We start with a preliminary claim.
	\begin{lemma}\label{l1}
		Fix $p\in\mathbb{N}$. Let $F,G\in L^2_{\emph{loc}}\left(\mathbb{R_+;\mathbb{R}}^{p\times d}\right)$ and $S\subset \mathbb{R}_+$ be such $\mathbb{R}_+\setminus S$ is $\dd t-$null set. Suppose that   $F=G$ a.e. in $\mathbb{R}_+$. Then
		\begin{equation}\label{.1}
			\int_{0}^{T}F\left(T-s\right) \dd Z_s=	\int_{0}^{T}1_S\left(s\right)G\left(T-s\right) \dd Z_s,\quad\mathbb{P}-\text{a.s., for a.e. }T\in\mathbb{R}_+.
		\end{equation}
	In particular,
			\begin{equation}\label{2.1}
		F\ast \dd Z=G\ast \dd Z,\quad \mathbb{P}\otimes \dd t-\text{a.e.}
	\end{equation}
	\end{lemma}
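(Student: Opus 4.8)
The plan is to expand $\dd Z_s$ into its drift, continuous-martingale and purely-discontinuous-martingale parts via the decomposition $F\ast\dd Z=(F\ast b(X))+(F\ast\dd\widetilde Z)$ with $\widetilde Z=M^c+M^d$ from \eqref{Z}, and to verify the identity separately for $\int_0^T F(T-s)\,b(X_s)\,\dd s$, $\int_0^T F(T-s)\,\dd M^c_s$ and $\int_0^T F(T-s)\,\dd M^d_s$. The unifying point is that, in the $s$-variable, each of the three integrators is absolutely continuous with respect to $\dd s$: the drift is $b(X_s)\,\dd s$ with $b(X_\cdot)\in L^1_{\text{loc}}(\mathbb R_+;\mathbb R^d)$ $\mathbb P$-a.s.\ (a consequence of \eqref{1.4}); $\dd\langle M^c_j,M^c_j\rangle_s=(A_0^{jj}+\sum_{k=1}^m X_{k,s}A_k^{jj})\,\dd s$; and $\dd\langle M^d_j,M^d_j\rangle_s$ is given by \eqref{pred_q}. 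Consequently, altering the integrand on an $s$-null set affects none of the three objects, $\mathbb P$-a.s.

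First I would dispose of the exceptional values of $T$. Since $F=G$ a.e.\ on $\mathbb R_+$ and $\mathbb R_+\setminus S$ is $\dd t$-null, Tonelli's theorem applied to the measurable set $\{(T,s):0\le s\le T,\ F(T-s)\ne 1_S(s)G(T-s)\}$ produces a $\dd t$-null set $N'\subset\mathbb R_+$ such that, for every $T\notin N'$, one has $F(T-s)=1_S(s)G(T-s)$ for a.e.\ $s\in[0,T]$. I then enlarge the $\dd t$-null set $N$ from the Preliminaries so that, for $T\notin N$, the convolutions of $M^c$ and $M^d$ with $F$, with $1_SG$, and hence with $F-1_SG$, are all well-defined locally square-integrable martingales (each of these integrands lies in $L^2_{\text{loc}}$), and fix $T\notin N\cup N'$.

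The heart of the argument is three applications of the same observation. For the drift part, $F(T-s)\,b(X_s)=1_S(s)G(T-s)\,b(X_s)$ for a.e.\ $s$, and both sides lie in $L^1([0,T];\mathbb R^d)$ $\mathbb P$-a.s., so the two pathwise Lebesgue integrals coincide $\mathbb P$-a.s. For the martingale parts, the integrand $h(s):=F(T-s)-1_S(s)G(T-s)$ is deterministic and $\dd s$-a.e.\ zero on $[0,T]$; since $\langle M^c_j,M^c_j\rangle$ and $\langle M^d_j,M^d_j\rangle$ are absolutely continuous in time, $h$ is therefore $\mathbb P\otimes\dd\langle M^c_j,M^c_j\rangle$-a.e.\ zero and $\mathbb P\otimes\dd\langle M^d_j,M^d_j\rangle$-a.e.\ zero, so by the It\^o isometry the stochastic integrals of $h$ against $M^c$ and $M^d$ vanish $\mathbb P$-a.s. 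Summing the three identities yields \eqref{.1}. Running the same three estimates with $G$ in place of $F$ gives $\int_0^T G(T-s)\,\dd Z_s=\int_0^T 1_S(s)G(T-s)\,\dd Z_s$ $\mathbb P$-a.s.\ for a.e.\ $T$; chaining this with \eqref{.1} shows that $F\ast\dd Z$ and $G\ast\dd Z$ agree at a.e.\ $T$, $\mathbb P$-a.s. As both are jointly measurable processes on $\Omega\times\mathbb R_+$, the set $D=\{(\omega,T):(F\ast\dd Z)_T(\omega)\ne(G\ast\dd Z)_T(\omega)\}$ is measurable, and its $\omega$-section is $\mathbb P$-null for a.e.\ $T$; by Tonelli's theorem $D$ is $\mathbb P\otimes\dd t$-negligible, which is \eqref{2.1}.

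I expect no real obstacle, only bookkeeping: one must keep the several $\dd t$-null exceptional sets of $T$ under control and check that at each step the integrand involved is admissible for the stochastic integral in which it appears. Both requirements are already supplied by the integrability computations in the Preliminaries, of which this lemma is essentially a soft corollary.
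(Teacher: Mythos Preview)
Your proposal is correct and follows essentially the same route as the paper: reduce to the martingale part $\widetilde Z=M^c+M^d$ (the drift contribution being an ordinary Lebesgue integral unaffected by $\dd s$-null changes), then exploit the absolute continuity of $\dd\langle M^c_j,M^c_j\rangle_s$ and $\dd\langle M^d_j,M^d_j\rangle_s$ with respect to $\dd s$ to conclude that the stochastic integral of the a.e.\ vanishing difference $h(s)=F(T-s)-1_S(s)G(T-s)$ has zero quadratic variation and hence is zero. The paper phrases the last step via $\langle Q,Q\rangle=0$ for the difference process rather than invoking the It\^o isometry, and derives \eqref{2.1} simply by taking $S=\mathbb{R}_+$ in \eqref{.1} instead of your chaining through $1_SG$; these are cosmetic differences only.
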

	\begin{proof}
		It is sufficient to prove \eqref{.1} replacing $Z$ with $\widetilde{Z}$, because trivially $F\ast b\left( X\right)=G\ast \left(1_Sb\left(X\right)\right),$ $\mathbb{P}\otimes \dd t-$a.e. on $\Omega\times \mathbb{R}_+$. Moreover, we only work with the stochastic integral in $\dd M^c$, as by \eqref{pred_q} we can repeat the next procedure (component--wise) for the convolution in $\dd M^d$ recovering \eqref{.1}. 
		
		The argument above in the section implies the existence of a $\dd t-$null set $N\subset \mathbb{R}_+$ such that, for every $T\in\mathbb{R}_+\setminus N$, we have 
		\begin{equation}\label{2.2}
			\int_{0}^{T}1_S\left(s\right)G\left(T-s\right)\dd M^c_s-\int_{0}^{T}F\left(T-s\right) \dd M^c_s=
			\int_{0}^{T}\left(1_S\left(s\right)G\left(T-s\right)-F\left(T-s\right)\right)\dd M^c_s,\quad
			\mathbb{P}-\text{a.s.}
		\end{equation}		
		Consider the  square--integrable, $p-$dimensional martingale $Q=\left(\int_{0}^{t}\left(1_S\left(s\right)G\left(T-s\right)-F\left(T-s\right)\right)\dd M_s^c\right)_{t\le T}$, whose predictable quadratic covariation is, due to the hypotheses,
		\[
		\left\langle Q,Q \right\rangle_t\!=\!\int_{0}^t\!\left(1_S\left(s\right)G\left(T-s\right)-F\left(T-s\right)\right)a\left(X_s\right)\left(1_S\left(s\right)G\left(T-s\right)-F\left(T-s\right)\right)^\top \dd s=0,\!\quad t\in\left[0,T\right],\mathbb{P}-\text{a.s.}
		\]
		Since $Q$ starts at $0$, we can conclude that $Q=0$ up to evanescence, hence \eqref{.1} follows.
		
	Regarding \eqref{2.1}, it is an immediate consequence of \eqref{.1} with $S=\mathbb{R}_+$ and the joint measurability of the stochastic convolutions, which allows to state an equality holding true $\mathbb{P}\otimes \dd t-$a.e. This completes the proof.
\end{proof}
	\begin{rem}\label{rem1}
		In \cite{sergio}, the authors consider the stochastic convolution of a function $F\in L^2_{\emph{loc}}\left(\mathbb{R}_+;\mathbb{R}^{p\times d}\right)$ with respect to a continuous local martingale $M$ with predictable quadratic covariation  $d\left\langle M,M \right\rangle_t=a_t\,\dd t$, where $\left(a_t\right)$ is an adapted, locally bounded  process. These assumptions allow to define $\left(F\ast \dd M\right)_t$ for every $t\in\mathbb{R}_+$. In particular, two  jointly measurable  versions of the stochastic convolution are equal $\mathbb{P}-$a.s., for every $t\ge0$. This concept is stronger than the $\mathbb{P}\otimes \dd t-$uniqueness that we have in our framework. As for \eqref{2.1} in Lemma~\ref{l1}, in the continuous case it can be stated as follows: for every $F,G\in L^2_{\emph{loc}}\left(\mathbb{R}_+;\mathbb{R}^{p\times d}\right)$, with $F=G$ a.e. in $\mathbb{R}_+$, one has
		\[
		\left(F\ast \dd M\right)_t=\left(G\ast \dd M\right)_t,\quad \mathbb{P}-\text{a.s., } t\ge0.
		\]
	\end{rem}
	Now we state a result concerning the associativity of the stochastic convolution.
	\begin{lemma}\label{4.1}
		Fix $p,q\in\mathbb{N}$. Let $\rho\in L^1_\emph{loc}\left(\mathbb{R}_+;\mathbb{R}^{q\times p}\right)$ and $F\in L^2_{\emph{loc}}\left(\mathbb{R}_+;\mathbb{R}^{p\times d}\right)$. Then
		\begin{equation}\label{time}
			\left(\left(\rho\ast F\right)\ast \dd Z\right)_t=\left(\rho\ast \left(F\ast \dd Z\right)\right)\left(t\right),\quad \mathbb{P}-\text{a.s., for a.e. }t\in\mathbb{R}_+.
		\end{equation}
	\end{lemma}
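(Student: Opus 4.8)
The plan is to decompose $Z$ into its drift and martingale parts, to dispose of the drift by the ordinary Fubini theorem, and to reduce each martingale part to a stochastic Fubini theorem whose integrability hypothesis is checked by combining Young's inequality for convolutions with the moment estimate \eqref{1.4}. I would begin by noting that, by Young's inequality \cite[Lemma A$.1$]{ACLP}, $\rho\ast F\in L^2_{\text{loc}}\left(\mathbb{R}_+;\mathbb{R}^{q\times d}\right)$, so the left--hand side of \eqref{time} is a well--defined stochastic convolution in the sense of Section \ref{sec:prelim}, and that the estimates of Section \ref{sec:prelim} together with \eqref{1.4} give $F\ast\dd\widetilde{Z}$ trajectories in $L^1_{\text{loc}}$ for $\mathbb{P}$--a.e.\ $\omega$, so the right--hand side is well defined too. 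Writing $Z=\int_0^\cdot b\left(X_s\right)\dd s+\widetilde{Z}$ with $\widetilde{Z}=M^c+M^d$ as in \eqref{Z}, by linearity and the insensitivity of all these convolutions to modifications on $\mathbb{P}\otimes\dd t$--null sets (cf.\ Lemma \ref{l1}), it suffices to prove \eqref{time} with $\dd Z$ replaced successively by $b\left(X\right)\dd s$, by $\dd M^c$ and by $\dd M^d$. The drift identity $\left(\left(\rho\ast F\right)\ast b\left(X\right)\right)\left(t\right)=\left(\rho\ast\left(F\ast b\left(X\right)\right)\right)\left(t\right)$ I would obtain pathwise, for $\mathbb{P}$--a.e.\ $\omega$ and a.e.\ $t$, from the classical Fubini theorem (e.g.\ \cite[Theorem $2.2$\,(\lowerRomannumeral{1}), Chapter $2$]{g}), since $F,\rho\in L^1_{\text{loc}}$ and $X_\cdot\left(\omega\right)\in L^1_{\text{loc}}$; only the two martingale parts then remain.

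For the continuous part, the substitution $u=s+v$ gives the deterministic identity $\left(\rho\ast F\right)\left(t-s\right)=\int_s^t\rho\left(t-u\right)F\left(u-s\right)\dd u$, so \eqref{time} with $\dd Z$ replaced by $\dd M^c$ becomes the interchange
\[
\int_0^t\left(\int_s^t\rho\left(t-u\right)F\left(u-s\right)\dd u\right)\dd M^c_s=\int_0^t\rho\left(t-u\right)\left(\int_0^u F\left(u-s\right)\dd M^c_s\right)\dd u ,\qquad\mathbb{P}\text{--a.s., for a.e. }t .
\]
By \eqref{1.4}, on every $\left[0,\bar T\right]$ the martingale $M^c$ is genuinely square--integrable and $\dd\scp{M^c_j,M^c_j}_s$ is absolutely continuous with density dominated by $C\left(1+\left|X_s\right|\right)$; I would then invoke a stochastic Fubini theorem, once its integrability hypothesis has been verified. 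As a concrete, self--contained route one can first establish the interchange for $\rho$ continuous and $F$ a step function—where it is a classical stochastic Fubini with bounded integrand—and then pass to the general case by approximating $F$ in $L^2\left(\left[0,\bar T\right]\right)$ by step functions and $\rho$ in $L^1\left(\left[0,\bar T\right]\right)$ by continuous functions, the convergence of both sides in $L^1\left(\Omega\times\left[0,\bar T\right];\mathbb{P}\otimes\dd t\right)$ being governed by the estimate below; the continuous--trajectory version of this interchange underlies the arguments of \cite{sergio}.

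The crux—and the point where our setting genuinely departs from \cite{sergio}—is that $X$ carries no a priori bound, so the relevant second moments must be controlled by hand. Using $\dd\scp{M^c_j,M^c_j}_s\le C\left(1+\left|X_s\right|\right)\dd s$, the submultiplicativity $\left|\left(\rho\ast F\right)\left(t-s\right)\right|\le\left(\left|\rho\right|\ast\left|F\right|\right)\left(t-s\right)$, Tonelli's theorem, Young's inequality $\norm{\,\left|\rho\right|\ast\left|F\right|\,}_{L^2\left(\left[0,\bar T\right]\right)}\le\norm{\rho}_{L^1\left(\left[0,\bar T\right]\right)}\norm{F}_{L^2\left(\left[0,\bar T\right]\right)}$ and \eqref{1.4}, one gets for every $\bar T>0$
\[
\int_0^{\bar T}\mathbb{E}\left[\sum_{j=1}^d\int_0^t\left|\left(\rho\ast F\right)\left(t-s\right)\right|^2\dd\scp{M^c_j,M^c_j}_s\right]\dd t\le C\,\norm{\rho}_{L^1\left(\left[0,\bar T\right]\right)}^2\norm{F}_{L^2\left(\left[0,\bar T\right]\right)}^2\left(\bar T+\mathbb{E}\left[\int_0^{\bar T}\left|X_s\right|\dd s\right]\right)<\infty .
\]
I expect this estimate, which feeds both the stochastic Fubini hypothesis and the limit passages above, to be the main obstacle; that it is finite only after integrating in $t$ is exactly why the conclusion holds for a.e.\ $t$, in agreement with \eqref{time}. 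If one prefers to bypass \eqref{1.4}, the same computation performed on $\left[0,\bar T\wedge\tau_n\right]$ with the stopping times $\tau_n=\inf\left\{t\ge0:\int_0^t\left|X_s\right|\dd s>n\right\}$ of Section \ref{sec:prelim}, followed by $n\to\infty$, works just as well.

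For the jump part I would run the same argument componentwise, with $\dd\scp{M^c_j,M^c_j}_s$ replaced by the predictable quadratic variation \eqref{pred_q} of $M^d_j$—again absolutely continuous, with density dominated by $C\left(1+\left|X_s\right|\right)$ thanks to the integrability assumptions on the $\nu_k$—or, alternatively, using the representation $\dd M^d_t=\int_{\mathbb{R}^d}\xi\left(\mu-\nu\right)\left(\dd t,\dd\xi\right)$ together with a stochastic Fubini theorem for integrals against a compensated random measure. Adding the drift, continuous and jump identities then yields $\left(\left(\rho\ast F\right)\ast\dd Z\right)_t=\left(\rho\ast\left(F\ast\dd Z\right)\right)\left(t\right)$ for a.e.\ $t$, $\mathbb{P}$--a.s., which is \eqref{time}.
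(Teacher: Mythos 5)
Your plan follows the paper's proof quite closely: decompose $Z$ into drift plus $M^c$ plus $M^d$, dispose of the drift with ordinary Fubini, and treat each martingale part via a stochastic Fubini theorem after verifying integrability with Young's inequality and \eqref{1.4}. The paper proceeds exactly this way and additionally reduces by linearity to $d=p=q=1$ with $\rho\ge0$; it then applies \cite[Theorem $65$, Chapter \upperRomannumeral{4}]{protter} directly to the parameterized integrand $H^s_u=1_{\{u\le t-s\}}F(t-s-u)$, and uses Lemma~\ref{l1} to deal with the null set $\mathbb{R}_+\setminus S$ on which the inner convolution is undefined.

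One point worth flagging. You write that your key moment estimate ``feeds both the stochastic Fubini hypothesis and the limit passages.'' For the direct application of Protter's Theorem 65 the hypothesis concerns $\int_A \left(H^a_u\right)^2\mu(\dd a)$, which in this problem is the \emph{convolution of the square}, $\left(|F|^2\ast\rho\right)(t-u)$, not the \emph{square of the convolution} $\left|\left(\rho\ast F\right)\right|^2(t-u)$ that your display bounds. By Jensen, $\left|\left(\rho\ast F\right)\right|^2\le\norm{\rho}_{L^1}\cdot\left(|F|^2\ast\rho\right)$, so the Protter hypothesis is the stronger quantity; your estimate is implied by, but does not imply, the one the paper actually verifies (the paper bounds $\int_0^{\widebar T}\mathbb{E}\left[\int_0^t\left(|F|^2\ast\rho\right)\left(t-u\right)\left|X_u\right|\dd u\right]\dd t\le\norm{\rho}_{L^1}\norm{F}^2_{L^2}\mathbb{E}\left[\norm{X}_{L^1}\right]$). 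This does not sink your argument: your self--contained route, first proving the interchange for a continuous $\rho$ and a step $F$ and then passing to the limit in $L^1(\Omega\times[0,\widebar T])$, is a valid alternative, and \emph{there} your estimate is precisely the quantity that controls the LHS, with a companion computation controlling the RHS. So either swap in the $\left(|F|^2\ast\rho\right)$--form of the estimate if you want to cite Protter directly, or carry the approximation argument to completion (including the convergence of $\rho_n\ast\left(F_n\ast\dd M^c\right)$); as stated, the sentence claiming the estimate verifies the Fubini hypothesis is imprecise.

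Otherwise the proposal is sound: the componentwise treatment of $M^d$ via \eqref{pred_q}, the WLOG reductions, the pathwise drift identity, and the use of the localizing sequence $\tau_n$ as a fallback for \eqref{1.4} all match the paper's reasoning.
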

	\begin{proof}
		Also in this case we just need to show the statement with $\dd \widetilde{Z}$ in place of $\dd Z$, because an application of Fubini's theorem provides
		$\left(\rho\ast F\right)\ast b\left(X\right)=\rho\ast \left(F\ast b\left(X\right)\right),\,\mathbb{P}\otimes \dd t-$a.e. on $\Omega\times\mathbb{R}_+$.
		In addition it is sufficient to focus only on the stochastic convolutions in $\dd M^c$, as discussed in the preceding proof. By linearity we can assume $d=p=q=1$ without loss of generality, and we consider $\rho\ge0$ to keep the notation simple, otherwise we should split it into positive and negative part. 
		
		 First note that the function $\rho\ast F\in L^2_\text{loc}\left(\mathbb{R}_+;\mathbb{R}\right)$, hence for every  $t\in\mathbb{R}_+\setminus N_1$, being $N_1$ a $\dd t-$null set, we have
		\begin{multline}\label{he}
			\left(\left(\rho\ast F\right)\ast \dd M^c\right)_t=\int_{0}^t \left(1_{\left\{\left(t-u\right)\in S\right\}}\int_{0}^{t-u}F\left(t-u-s\right)\rho\left(s\right)\dd s\right)\dd M^c_u\\
			=\int_{0}^t\left(\int_{0}^{t}1_{\left\{\left(t-u\right)\in S\right\}}1_{\left\{s\le t-u\right\}}F\left(t-u-s\right)\rho\left(s\right)\dd s\right)\dd M^c_u,\quad \mathbb{P}-\text{a.s},
		\end{multline}
		where $S\subset\mathbb{R}_+$ is such that $\int_{0}^{t}F\left(t-s\right)\rho\left(s\right)\dd s ,\,t\in S,$ is well--defined. In particular, $\mathbb{R}_+\setminus S$ is a $\dd t-$null set.
		Our goal is to apply the stochastic Fubini's theorem (see, e.g., \cite[Theorem $65$, Chapter \upperRomannumeral{4}]{protter}), but before we can do that we need a preliminary step. For every $T>0$,  a change of variables, sequential applications of Tonelli's theorem and Young's inequality yield (in the whole $\Omega$)
		\begin{align*}
			\begin{split}
				&\int_{0}^{T}\left[\int_0^t\left(\int_0^t1_{\left\{\left(t-u\right)\in S\right\}}1_{\left\{s\le t-u\right\}}\left|F\left(t-s-u\right)\right|^2\rho\left( s\right)\dd s\right)\left|X_u\right|\,\dd u\right]\dd t\\
				&\le
				\int_{0}^{T}\left[\int_0^t\left(\int_0^{t-s} \!\left|F\left(t-s-u\right)\right|^2\left|X_u\right|\,\dd u\right)\rho\left( s\right)\dd s\right]\dd t
				=\int_{0}^{T}\left[\int_s^T\left(\left|F\right|^2\ast \left|X\right|\right)\left(t-s\right)\dd t\right]\rho\left( s\right)\dd s
				\\&=\int_{0}^{T}\left[\int_0^{T-s}\left(\left|F\right|^2\ast \left|X\right|\right)\left(t\right)\dd t\right]\rho\left( s\right)\dd s
				\le \norm{\rho}_{L^1\left(\left[0,T\right]\right)} \norm{F}^2_{L^2\left(\left[0,T\right]\right)}\norm{X}_{L^1\left(\left[0,T\right]\right)}.
			\end{split}
		\end{align*}
		Taking expectation and recalling \eqref{1.4} we have
		\begin{multline*}
			\int_{0}^{T}\mathbb{E}\left[\int_0^t\left(\int_0^t1_{\left\{\left(t-u\right)\in S\right\}}1_{\left\{s\le t-u\right\}}\left|F\left(t-s-u\right)\right|^2\rho\left( s\right)\dd s\right)\left|X_u\right|\dd u\right]\dd t
			\\\le
			 \norm{\rho}_{L^1\left(\left[0,T\right]\right)}\norm{F}^2_{L^2\left(\left[0,T\right]\right)}\mathbb{E}\left[\norm{X}_{L^1\left(\left[0,T\right]\right)}\right]<\infty.
		\end{multline*}
		This proves that there exists $N_2\subset \mathbb{R}_+$ such that  
		\begin{equation}\label{fa}
			\mathbb{E}\left[\int_0^t\left(\int_0^t1_{\left\{\left(t-u\right)\in S\right\}}1_{\left\{s\le t-u\right\}}\left|F\left(t-s-u\right)\right|^2\rho\left( s\right)\dd s\right)X_u\,\dd u\right]<\infty,\quad t\in\mathbb{R}_+\setminus N_2.
		\end{equation}
		Taking $t\in\mathbb{R}_+\setminus\left(N_1\cup N_2\right)$, thanks to \eqref{fa} and Lemma \ref{l1} (see \eqref{.1}) 
		 we can apply the stochastic Fubini's theorem in \eqref{he} to deduce that
		\begin{align*}
			&\left(\left(\rho\ast F\right)\ast \dd M^c\right)_t
			\\
			&=\int_{0}^t\!\left(\int_{0}^{t}1_{\left\{\left(t-u\right)\in S\right\}}1_{\left\{s\le t-u\right\}}F\left(t-u-s\right)\rho\left( s\right)\dd s\right)\dd M^c_u\!
			=\!\int_{0}^{t}\left(\int_{0}^{t-s}\!1_{\left\{\left(t-u\right)\in S\right\}}F\left(t-s-u\right)\dd M^c_u\right)\!\rho\left( s\right)\dd s\\
			&=\int_{0}^t\left(F\ast \dd M^c\right)_{t-s}\rho\left( s\right)\dd s
			=\left(\rho\ast\left(F\ast \dd M^c\right)\right)\left(t\right),\quad \mathbb{P}-\text{a.s.},
		\end{align*}
		and the proof is complete.
	\end{proof}
	\begin{rem}\label{rem3}
		The previous result is the analogue of \cite[Lemma $2.1$]{sergio}, where the authors are able --in the framework described in Remark \ref{rem1}-- to handle a generic signed measure of locally bounded variation ${L}$. Essentially they can do so because the convolution $F\ast \dd M$ is defined as a stochastic integral for every $t\in\mathbb{R}_+$. As a consequence, it is unique up to a  $\mathbb{P}\otimes \left|{L}\right|-$null set, being $\left|{L}\right|$ the total variation measure of ${L}$.
		\\		 In contrast with this, notice that in our setting  it is not possible to make sense of the right side of \eqref{time} for a fixed time $t>0$ when $\rho$ is replaced by ${L}$. Indeed, $F\ast \dd Z$ is only defined up to a $\mathbb{P}\otimes \dd t-$null set, therefore  the value of $\left({L}\ast\left(F\ast \dd Z\right)\right)\left(t\right)$ would depend on the modification one chooses. However,  Lemma~\ref{4.1} can be slightly extended by replacing $\rho$ in \eqref{time} with an $\mathbb{R}^{q\times p}-$valued measure  which is the sum of a locally integrable function and a point mass in $0$ (this extension can be inferred directly from \eqref{time}). We are going to need this final comment in Section \ref{sec:past}.  
	\end{rem}
	We are now ready to state  an analogue of \cite[Lemma $2.6$]{sergio}. 
	\begin{proposition}\label{p5}
		Assume that $m=d,$ and that the kernel $K\in L^2_{\emph{loc}}\left(\mathbb{R}_+;\mathbb{R}^{d\times d}\right)$ admits a resolvent of the first kind $L$\footnote{Given a kernel $K\in L^1_{\text{loc}}\left(\mathbb{R}_+; \mathbb{R}^{d\times d}\right)$, an $\mathbb{R}^{d\times d}-$valued measure $L$ is called its \emph{(measure) resolvent of the first kind} if $L\ast K=K\ast L=I$, where $I\in\mathbb{R}^{d\times d}$ is the identity matrix. $L$ does not always exists, but if it does then it is unique (cfr. \cite[Theorem $5.2$, Chapter $5$]{g}). }. Let $F\in L^2_{\emph{loc}}\left(\mathbb{R}_+;\mathbb{R}^{d\times d}\right)$ be such that $F\ast L$ is locally absolutely continuous. Then
		\begin{equation}\label{le2.6}
			\left(F\ast \dd Z\right)_t
			=\left(F\ast L\right)\left(0\right)\left(X-g_0\right)\left(t\right)+\left(\left(F\ast L\right)'\ast \left(X-g_0\right)\right)\left(t\right),\quad \text{for a.e. }t\in\mathbb{R}_+,\,\mathbb{P}-\text{a.s.}
		\end{equation}
	\end{proposition}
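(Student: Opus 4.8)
The plan is to reduce the statement to a deterministic convolution identity between the kernels and then feed it into the stochastic--convolution results of Lemmas~\ref{l1} and~\ref{4.1}. The identity I would establish is that, as $\mathbb{R}^{d\times d}$--valued functions on $\mathbb{R}_+$,
\[
F=(F\ast L)(0)\,K+(F\ast L)'\ast K\qquad\text{a.e.}
\]
To obtain it, note first that by associativity of convolution for a locally finite measure and $L^1_{\text{loc}}$ functions (see \cite[Chapter~2]{g}) together with the resolvent identity $L\ast K=I$, one has $(F\ast L)\ast K=F\ast(L\ast K)=F\ast I=\int_0^\cdot F(s)\,\dd s$, whose a.e.\ derivative equals $F$. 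On the other hand, since $F\ast L$ is locally absolutely continuous, the standard differentiation rule $(h\ast K)'=h(0)\,K+h'\ast K$ (valid for locally absolutely continuous $h$) applied to $h=F\ast L$ gives $\big((F\ast L)\ast K\big)'=(F\ast L)(0)\,K+(F\ast L)'\ast K$ a.e., with $(F\ast L)'\ast K\in L^1_{\text{loc}}$ because $(F\ast L)'\in L^1_{\text{loc}}$ and $K\in L^2_{\text{loc}}$. Comparing the two expressions for this derivative yields the displayed identity.

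Granting it, the proposition follows in three steps. First, by \eqref{2.1} in Lemma~\ref{l1} we may replace $F$ inside $F\ast\dd Z$ by the right--hand side of the identity, so that, by linearity of the stochastic convolution and pulling out the constant matrix $(F\ast L)(0)$, $F\ast\dd Z=(F\ast L)(0)\,(K\ast\dd Z)+\big((F\ast L)'\ast K\big)\ast\dd Z$, $\mathbb{P}\otimes\dd t-$a.e. Second, \eqref{2} gives $K\ast\dd Z=X-g_0$, $\mathbb{P}\otimes\dd t-$a.e., so the first term equals $(F\ast L)(0)(X-g_0)$. Third, applying Lemma~\ref{4.1} with $\rho=(F\ast L)'\in L^1_{\text{loc}}(\mathbb{R}_+;\mathbb{R}^{d\times d})$ --which is admissible precisely because $F\ast L$ is locally absolutely continuous-- yields $\big((F\ast L)'\ast K\big)\ast\dd Z=(F\ast L)'\ast(K\ast\dd Z)=(F\ast L)'\ast(X-g_0)$, $\mathbb{P}\otimes\dd t-$a.e., the last equality because convolving a fixed $L^1_{\text{loc}}$ function with two processes agreeing $\mathbb{P}\otimes\dd t-$a.e.\ produces processes agreeing $\mathbb{P}\otimes\dd t-$a.e. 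Summing the two contributions gives \eqref{le2.6} in its $\mathbb{P}\otimes\dd t-$a.e.\ form, which is equivalent to the stated one by Fubini.

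The step I expect to require the most care is the bookkeeping of $\dd t-$null sets. Unlike the continuous setting of \cite{sergio}, where stochastic convolutions are defined pathwise for every $t$, here $F\ast\dd Z$, $K\ast\dd Z$ and the convolution produced by Lemma~\ref{4.1} are only determined up to $\mathbb{P}\otimes\dd t-$null sets, so each manipulation --substituting the kernel identity, identifying $K\ast\dd Z$ with $X-g_0$, and pulling $(F\ast L)'$ out of the stochastic integral-- has to be routed through Lemmas~\ref{l1} and~\ref{4.1} and Fubini rather than argued pointwise in $t$. The purely analytic ingredients (associativity of convolution with a measure, the differentiation rule for $h\ast K$) are classical and cause no difficulty once local absolute continuity of $F\ast L$ is assumed; note in particular that, since the statement already isolates the value $(F\ast L)(0)$ at the origin from the absolutely continuous part, the plain version of Lemma~\ref{4.1} is enough and the measure--valued extension discussed in Remark~\ref{rem3} is not needed.
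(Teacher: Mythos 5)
Your proposal is correct and follows essentially the same route as the paper: you derive the deterministic identity $F=(F\ast L)(0)K+(F\ast L)'\ast K$ a.e.\ from associativity, the resolvent relation $L\ast K=I$, and Lebesgue's fundamental theorem of calculus, then convolve with $\dd Z$, invoking Lemma~\ref{l1} (eq.~\eqref{2.1}) and Lemma~\ref{4.1} with $\rho=(F\ast L)'$, and finally substitute $K\ast\dd Z=X-g_0$ from \eqref{2}. The only detail treated more lightly than in the paper is the integrability check (the paper's estimate \eqref{at_last}) that makes $(F\ast L)'\ast(X-g_0)$ a well-defined, jointly measurable process, but you correctly flag that this is the point requiring care.
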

	\begin{proof}
		By {Lebesgue's fundamental theorem of calculus} we can write (denoting by $I$ the identity matrix in $\mathbb{R}^{d\times d}$)
		\[
		\left(F\ast L\right)\left(t\right)
		=\left(F\ast L\right)\left(0\right)+\int_{0}^{t}\left(F\ast L\right)'\left(s\right)\dd s=
		\left(F\ast L\right)\left(0\right)+\left(\left(F\ast L\right)'\ast I\right)\left(t\right),\quad t\ge0,
		\] 
		which implies, convolving with $K$, using \cite[Theorem $6.1$\,(\lowerRomannumeral{9}), Chapter $3$]{g} and a change of variables,
		\begin{equation*}
			\int_{0}^{t}F\left(s\right)\dd s=
			\left(F\ast L\right)\left(0\right)\int_{0}^{t}K\left(s\right)\dd s+\int_{0}^t\left(\left(F\ast L\right)'\ast K\right)\left(s\right)\,\dd s,\quad t\ge0.
		\end{equation*}
		We can differentiate both sides of the previous equation, as they are absolutely continuous functions, and we obtain
		\[
		F\left(t\right)=
		\left(	F\ast L\right)\left(0\right)K\left(t\right)+\left(\left(F\ast L\right)'\ast K\right)\left(t\right),\quad \text{for a.e. }t\in\mathbb{R}_+. 
		\]
		Then convolving with $\dd Z$ yields
		\begin{multline}\label{tea}
			\left(F\ast \dd Z\right)_t=
			\left(F\ast L\right)\left(0\right)\left(K\ast \dd Z\right)_t+\left(\left(\left(F\ast L\right)'\ast K\right)\ast \dd Z\right)_t\\
			=\left(F\ast L\right)\left(0\right)\left(K\ast \dd Z\right)_t+\left(\left(F\ast L\right)'\ast \left(K\ast \dd Z\right)\right)\left(t\right),\quad \mathbb{P}-\text{a.s, for a.e. }t\in\mathbb{R}_+,
		\end{multline}
		where in the first equality we use Lemma \ref{l1} (see \eqref{2.1}) and in the second Lemma \ref{4.1} with $\rho=\left(F\ast L\right)'$. The crucial point here is to pass to the trajectories. In order to do so, observe that by \eqref{2} we have
		\[
		X_t-g_0\left(t\right)=\left(K\ast \dd Z\right)_t,\quad \text{for a.e. }t\in\mathbb{R}_+,\,\mathbb{P}-\text{a.s.},
		\]
		hence $\left(\left(F\ast L\right)'\ast \left(K\ast \dd Z\right)\right)\left(t\right)=\left(\left(F\ast L\right)'\ast\left(X-g_0\right)\right)\left(t\right)$, $\mathbb{P}-$a.s., for a.e. $t\in\mathbb{R}_+$. Moreover we can consider a jointly measurable modification of the process $\left(\left(F\ast L\right)'\ast\left(X-g_0\right)\right)$ thanks to Fubini's theorem, which in turn can be applied as
		\begin{align}\label{at_last}
			\begin{split}
				&\mathbb{E}\left[\int_{0}^{T}\left(\int_{0}^{T}1_{\left\{s\le t\right\}}\left|\left(F\ast L\right)'\left(s\right)\right|\left|\left(X-g_0\right)\left(t-s\right)\right|\dd s\right)\dd t\right]
				\\&=\mathbb{E}\left[\int_{0}^{T}\left(\int_{0}^{t}\left|\left(F\ast L\right)'\left(s\right)\right|\left|\left(X-g_0\right)\left(t-s\right)\right|\dd s\right)\dd t\right]
				\\
				&\le\norm{\left(F\ast L\right)'}_{L^1\left(\left[0,T\right];\mathbb{R}^{d\times d}\right)}\left(\mathbb{E}\left[\norm{X}_{L^1\left(\left[0,T\right];\mathbb{R}^d\right)}\right]+\norm{g_0}_{L^1\left(\left[0,T\right];\mathbb{R}^d\right)}\right)<\infty,\quad T>0,
			\end{split}	
		\end{align}
		by Tonelli's theorem, Equation \eqref{1.4} and \cite[Theorem $2.2$\,(\lowerRomannumeral{1}), Chapter $2$]{g}.
		Substituting this term in \eqref{tea} and recalling once again \eqref{2} we deduce that 
		\[
		\left(F\ast \dd Z\right)_t=\left(F\ast L\right)\left(0\right)\left(X-g_0\right)\left(t\right)+\left(\left(F\ast L\right)'\ast \left(X-g_0\right)\right)\left(t\right),\quad \mathbb{P}-\text{a.s., for a.e. }t\in\mathbb{R}_+.
		\]
		This equality can be understood up to a  $\mathbb{P}\otimes \dd t-$null set because it involves only jointly measurable processes. Therefore \eqref{le2.6} holds true and  the proposition is completely proved.
	\end{proof}
	\begin{rem}\label{rem4}
		In \cite[Lemma $2.6$]{sergio} the authors require $F\ast L$ to be right--continuous and of locally bounded variation. The loss of generality in Proposition \ref{p5}, where we assume the local absolute continuity for the same function, is triggered by Lemma \ref{4.1} and Remark \ref{rem3}.
	\end{rem}
	\section{Towards the conditional Fourier--Laplace transform}\label{sec:FLT}
	In this section we are going to introduce processes $V^T=\left(V^T_t\right)_{t\in\left[0,T\right]}$ which will be used to find an \emph{ansatz} for the conditional Fourier--Laplace transform of $\left(f^\top\ast X\right)\left(T\right),\,T>0$, where $f$ is a suitable given function. 
	
	We first introduce some notation. For a $\mathbb{C}-$valued function $g\in L^1\left(\nu_k\right),\,k=0,1,\dots,m,$ we denote
	\begin{align*}
		\left\langle\eta\left(x,\dd\xi\right),g\left(\xi\right)\right\rangle&=\int_{\mathbb{R}^d}g\left(\xi\right)\nu_0\left(\dd\xi\right)
		+\sum_{k=1}^m\left(\int_{\mathbb{R}^d}g\left(\xi\right)\nu_k\left(\dd\xi\right)\right)x_k,\quad x\in E;\\
		\nu\left(g\left( \xi\right)\right)&=\begin{bmatrix}
			\int_{\mathbb{R}^d}g\left(\xi\right)\nu_1\left(\dd\xi\right)&	\int_{\mathbb{R}^d}g\left(\xi\right)\nu_2\left(\dd\xi\right)&\dots&	\int_{\mathbb{R}^d}g\left(\xi\right)\nu_m\left(\dd\xi\right)
		\end{bmatrix}^\top\in\mathbb{C}^{m}.
	\end{align*}
	Note that $\left\langle\eta\left(x,\dd \xi\right),g\left(\xi\right)\right\rangle=\int_{\mathbb{R}^d}g\left(\xi\right)\nu_0\left(\dd\xi\right)+\nu\left(g\left(\xi\right)\right)^\top x$ for every $x\in E. $ 
	In addition, we consider 
	\begin{align*}
		B&=\begin{bmatrix}b_1&b_2&\dots&b_{m}\end{bmatrix}\in\mathbb{R}^{d\times m},\\
		A\left(u\right)&=
		\begin{bmatrix}u^\top A_1\,u&u^\top A_2\,u&\dots&u^\top A_m\,u
		\end{bmatrix}^\top\in\mathbb{C}^{m},\quad u\in\mathbb{C}^d.
	\end{align*}
	Notice that $b\left(x\right)=b_0+Bx$, and  $u^\top a\left(x\right)u=u^\top A_0u+A\left(u\right)^\top x,$ for every $x\in E,\,u\in\mathbb{C}^d$.
	
	Let us take $f\in C\left(\mathbb{R_+};\mathbb{C}^m\right)$ and define $F\colon\mathbb{R}_+\times\mathbb{C}^d_-\to\mathbb{C}^m$ as follows
	\begin{equation}\label{F}
		F\left(t,u\right)= f\left(t\right)+B^\top u+\frac{1}{2}A\left(u\right)+
		\nu \left(e^{ u^\top\xi}-1- u^\top\xi \right),\quad \left(t,u\right)\in\mathbb{R}_+\times \mathbb{C}^d_-.
	\end{equation}
\begin{hyp}\label{hyp1}
		There exists a continuous, global solution $\psi\colon\mathbb{R}_+\to\mathbb{C}^d_-$ to the deterministic Riccati--Volterra equation
	\begin{equation}
		\psi\left(t\right)^\top =\int_{0}^{t}F\left(s,\psi\left(s\right)\right)^\top K\left(t-s\right)\dd s=\left(F\left(\cdot,\psi\left(\cdot\right)\right)^\top \ast K \right)\left(t\right),\quad t\ge0\label{Vp}.
	\end{equation}
\end{hyp}
	\noindent Under Hypothesis \ref{hyp1}, we introduce  the $\mathbb{C}-$valued function $\phi\colon\mathbb{R}_+\to\mathbb{C}$ given by 
	\begin{equation}\label{phi}
		\phi(t)=\int_0^t \left(\psi(s)^\top b_0 + \frac12\psi(s)^\top A_0 \psi(s) + \int_{\mathbb{R}^d}\left({\rm e}^{ \psi(s)^\top \xi}-1-\psi(s)^\top \xi \right)\nu_0(\dd\xi)\right)\dd s,\quad t\ge 0.
	\end{equation}
	
	For every $T>0$ we define the following c\`adl\`ag, adapted, $\mathbb{C}-$valued semimartingale on $\Omega\times \left[0,T\right]$
	\begin{align}
		\begin{split}
			V_t^T&=V_0^T-\int_{0}^t\left[\frac{1}{2}\psi\left(T-s\right)^\top a\left(X_s\right)\psi\left(T-s\right)+	\left\langle\eta\left(X_s,\dd \xi\right), e^{
				\psi\left(T-s\right)^\top\xi}-1- \psi\left(T-s\right)^\top\xi\right\rangle\right]\dd s\\
			&\qquad\qquad\qquad\qquad\qquad\qquad\qquad\qquad\qquad\qquad\qquad\qquad\qquad\qquad\qquad\qquad
			+\int_{0}^{t}\psi\left(T-s\right)^\top \dd \widetilde{Z}_s,\label{y1}
		\end{split}
		\\
		\begin{split}
			V_0^T\!&=\!\int_{0}^T\!\left(f\left(T-s\right)+B^\top \psi\left(T-s\right)+\frac{1}{2}A\left(\psi\left(T-s\right)\right)+\nu\left(e^{ \psi\left(T-s\right)^\top \xi}-1- \psi\left(T-s\right)^\top\xi\right)\right)^\top\!\!g_0\left(s\right)\dd s\\
			&\qquad\qquad\qquad\qquad\qquad\qquad\qquad\qquad\qquad\qquad\qquad\qquad\qquad\qquad\qquad\qquad\qquad\qquad\qquad+\phi\left(T\right).\label{y2}
		\end{split}
	\end{align}
	Observe that $V^T$ is  left--continuous in $T$ because $\psi\left(0\right)=0$ by \eqref{Vp}. This process is the natural extension of \cite[Equations $\left(4.4\right)-\left(4.5\right)$]{sergio} to the framework with jumps. Moreover, one can write
	\begin{equation}\label{y2_2}
		V_0^T=\phi\left(T\right)+\int_{0}^{T}F\left(T-s,\psi\left(T-s\right)\right)^\top g_0\left(s\right)\dd s.
	\end{equation}

	Our aim is to find, using the stochastic Fubini's theorem, an alternative expression for the random variables $V_t^T$ by means of integrals in time of the trajectories of suitable processes. \\
	In the case $b\equiv0$, we are going to use the paths of the forward process. Precisely, for a fixed $t\in\left[0,T\right]$, by \eqref{for_no b} in \ref{ap_A} we have
	\begin{equation}\label{similar}
		\mathbb{E}\left[X_s\big|\mathcal{F}_t\right]=g_0\left(s\right)+\int_{0}^{t}K\left(s-r\right)\dd \widetilde{Z}_r,\quad \mathbb{P}-\text{a.s., for a.e. }s>t.
	\end{equation}
	Hence requiring the kernel $K$ to be continuous on $\left(0,\infty\right)$, the process on the right side of the previous equation has a jointly measurable version that we denote by $\widetilde{g}_t\left(s\right),\,s>t$. Note that it makes sense to integrate in time the trajectories of such $\widetilde{g}_t\left(\cdot\right)$ since it is unique up to a $\mathbb{P}\otimes \dd t-$null set. \\
	In the case $b\neq0$ we consider the paths of a process $g_t\left(\cdot\right)$ such that
	\begin{equation}\label{G^b}
		g_t\left(s\right)= g_0\left(s\right)+\int_{0}^{t}K\left(s-r\right)\dd Z_r,\quad \mathbb{P}-\text{a.s., }s>t.
	\end{equation}
	Also in this case we assume $K$ to be continuous on $\left(0,\infty\right)$, so that $g_t\left(\cdot\right)$ can be taken jointly measurable on $\Omega\times \left(t,\infty\right)$ and  is uniquely defined up to a $\mathbb{P}\otimes \dd t-$null set. Note that when $t=0$ we have an abuse of notation, as $g_0$ represents both the initial input curve in \eqref{f} and the process just defined in \eqref{G^b}. This, however, is not an issue as these two concepts coincide $\mathbb{P}\otimes dt-$a.e. in $\Omega\times \left(0,\infty\right)$. In the following, we continue to consider $g_0$ as the initial input curve. Finally, notice that 
	\[
	g_t\left(s\right)=\mathbb{E}\left[X_s-\int_{0}^{s-t}K\left(s-t-r\right)b\left(X_{t+r}\right)\dd r\,\Big|\,\mathcal{F}_t\right],\quad \mathbb{P}-\text{a.s., for a.e. }s>t.
	\]
	For this reason  $g_t\left(\cdot\right)$ is called \emph{adjusted forward process}.
	\begin{theorem}\label{t6}
		Assume Hypothesis \ref{hyp1}. Let $K\in L^2_{\emph{loc}}\left(\mathbb{R}_+;\mathbb{R}^{m\times d}\right)$ be a continuous kernel on $\left(0,\infty\right)$ and define, for every $t\in\left[0,T\right]$,
		\begin{equation}\label{y_tilde}
			\widetilde{V}_t^T= \phi\left(T-t\right)+
			\int_{0}^{t}f\left(T-s\right)^\top X_s\,\dd s	+\int_{t}^{T}F\left(T-s,\psi\left(T-s\right)\right)^\top g_t\left(s\right)\dd s. 
		\end{equation}
		Then 
		\begin{equation}\label{thj4.3}
			V^T_t=\widetilde{V}_t^T,\quad \mathbb{P}-\text{a.s.},\, t\in\left[0,T\right].
		\end{equation}
		
		In addition, the process $\left(\exp\left\{V_t^T\right\}\right)_{t\in\left[0,T\right]}$ is a $\mathbb{C}-$valued local martingale, and if it is a true martingale then 
		\begin{equation}\label{final}
			\mathbb{E}\left[\exp\left\{\left(f^\top\ast X\right)\left(T\right)\right\}\Big|\mathcal{F}_t\right]
			=
			\exp\left\{\widetilde{V}_t^T\right\}
			,\quad \mathbb{P}-\text{a.s., }t\in\left[0,T\right].
		\end{equation}
	\end{theorem}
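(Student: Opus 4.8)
The plan is to establish the three claims in order, adapting the argument of \cite[Theorem~4.3]{sergio} to the presence of jumps in $Z$ and to the fact that $X$ need not be bounded. For the identity \eqref{thj4.3}, fix $t\in[0,T]$ and start from $\widetilde V_t^T$, inserting the definition \eqref{G^b} of the adjusted forward process to split $\int_t^T F(T-s,\psi(T-s))^\top g_t(s)\,\dd s$ into the deterministic part $\int_t^T F(T-s,\psi(T-s))^\top g_0(s)\,\dd s$ and the stochastic part $\int_t^T F(T-s,\psi(T-s))^\top\!\left(\int_0^t K(s-r)\,\dd Z_r\right)\dd s$. A stochastic Fubini theorem rewrites the latter as $\int_0^t\!\left(\int_t^T F(T-s,\psi(T-s))^\top K(s-r)\,\dd s\right)\dd Z_r$. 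The Riccati--Volterra equation \eqref{Vp}, after the substitution $s\mapsto T-s$, yields $\int_r^T F(T-s,\psi(T-s))^\top K(s-r)\,\dd s=\psi(T-r)^\top$, so the inner integral equals $\psi(T-r)^\top-\int_r^t F(T-s,\psi(T-s))^\top K(s-r)\,\dd s$; a second stochastic Fubini together with \eqref{2} turns $\int_0^t\!\left(\int_r^t F(T-s,\psi(T-s))^\top K(s-r)\,\dd s\right)\dd Z_r$ into $\int_0^t F(T-s,\psi(T-s))^\top(X_s-g_0(s))\,\dd s$. Using \eqref{Z} to write $\int_0^t\psi(T-r)^\top\dd Z_r=\int_0^t\psi(T-r)^\top b(X_r)\,\dd r+\int_0^t\psi(T-r)^\top\dd\widetilde Z_r$, and unwinding the definition \eqref{F} of $F$ (through $b(x)=b_0+Bx$, $u^\top a(x)u=u^\top A_0u+A(u)^\top x$ and $\langle\eta(x,\dd\xi),g\rangle=\int g\,\nu_0(\dd\xi)+\nu(g)^\top x$) together with the definition \eqref{phi} of $\phi$ (so that $\phi(T)-\phi(T-t)$ accounts for the $b_0$, $A_0$ and $\nu_0$ contributions), every ``index-$0$'' term cancels and what remains is exactly the semimartingale representation \eqref{y1}--\eqref{y2} of $V_t^T$.

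For the local martingale property I would apply It\^o's formula for complex semimartingales with jumps to $\exp\{V_t^T\}$. By \eqref{y1} the continuous local martingale part of $V^T$ is $\int_0^\cdot\psi(T-s)^\top\,\dd M^c_s$, with $\dd\langle\,\cdot\,\rangle_s=\psi(T-s)^\top a(X_s)\psi(T-s)\,\dd s$, and $\Delta V_s^T=\psi(T-s)^\top\Delta Z_s$; hence the jump contribution is $\int_0^t\!\int_{\mathbb R^d}e^{V_{s-}^T}\!\left(e^{\psi(T-s)^\top\xi}-1-\psi(T-s)^\top\xi\right)\mu(\dd s,\dd\xi)$, which upon compensating $\mu$ by $\nu(\dd s,\dd\xi)=\eta(X_s,\dd\xi)\,\dd s$ becomes a local martingale plus $\int_0^t e^{V_{s-}^T}\langle\eta(X_s,\dd\xi),e^{\psi(T-s)^\top\xi}-1-\psi(T-s)^\top\xi\rangle\,\dd s$. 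The drift of $V^T$ in \eqref{y1} is precisely $-\tfrac12\psi(T-s)^\top a(X_s)\psi(T-s)-\langle\eta(X_s,\dd\xi),e^{\psi(T-s)^\top\xi}-1-\psi(T-s)^\top\xi\rangle$, so it cancels the It\^o second-order term and the compensator just produced, leaving $\exp\{V_t^T\}=\exp\{V_0^T\}+(\text{local martingale})$; note that this step uses only the form \eqref{y1} of $V^T$, not \eqref{Vp}.

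Finally, evaluating \eqref{thj4.3} at $t=T$ gives $V_T^T=\widetilde V_T^T=(f^\top\ast X)(T)$, since $\phi(0)=0$, $\psi(0)=0$ and the integral over $[T,T]$ vanishes; hence if $\exp\{V^T\}$ is a true martingale then $\mathbb E[\exp\{(f^\top\ast X)(T)\}\mid\mathcal F_t]=\mathbb E[\exp\{V_T^T\}\mid\mathcal F_t]=\exp\{V_t^T\}=\exp\{\widetilde V_t^T\}$, which is \eqref{final}. The main obstacle is the first step: rigorously justifying the two stochastic Fubini interchanges when $Z$ has jumps and $X$ is only in $L^1_{\mathrm{loc}}$. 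I would handle this with the integrability bound $\mathbb E[\|X\|_{L^1([0,T];\mathbb R^m)}]<\infty$ from Lemma~\ref{l1}, the a.e.\ well-definedness of the stochastic convolutions from Section~\ref{sec:prelim}, a splitting of $\psi(T-\cdot)^\top\dd\widetilde Z$ into its $\dd M^c$ and $\dd M^d$ parts (and of the complex-valued $\psi$ into real and imaginary parts), and moment estimates of the type appearing in \eqref{fa} and \eqref{at_last} to verify the hypotheses of the stochastic Fubini theorem; carefully tracking the various $\dd t$- and $\mathbb P\otimes\dd t$-null sets so that each equality is asserted at the appropriate ``a.e.'' level is the principal bookkeeping burden.
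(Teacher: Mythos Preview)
Your proposal is correct and follows essentially the same approach as the paper's proof: split $g_t$ into $g_0$ plus the stochastic convolution, apply the stochastic Fubini theorem together with the Riccati--Volterra equation \eqref{Vp} to recover $\int_0^t\psi(T-r)^\top\dd Z_r$, then unwind the definitions of $F$, $\phi$ and the decomposition \eqref{Z} to match \eqref{y1}--\eqref{y2}; the local martingale property and \eqref{final} are handled identically via It\^o's formula and evaluation at $t=T$. The only cosmetic difference is that the paper packages your two stochastic Fubini applications into a single one by introducing the auxiliary process $\widebar g_t(s)$ (equal to $X_s$ for $s\le t$ and to $g_t(s)$ for $s>t$), which slightly streamlines the bookkeeping but is mathematically the same computation.
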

	\begin{proof}
		It is straightforward to check that \eqref{thj4.3} holds true for $t=0$. 

		Focusing on the case $t\in\left(0,T\right]$, we rewrite the definition of $\widetilde{V}^T_t$ in \eqref{y_tilde} as follows
		\begin{multline}\label{tt}
			\widetilde{V}^T_t=\phi\left(T-t\right)+\int_{0}^{t}f\left(T-s\right)^\top X_s\,\dd s
			\\
			+
			\int_{t}^{T}F\left(T-s,\psi\left(T-s\right)\right)^\top  g_0\left(s\right)\dd s
			+\int_{t}^{T}F\left(T-s,\psi\left(T-s\right)\right)^\top \left(g_t-g_0\right)\left(s\right)\dd s.
		\end{multline} 
		It is convenient to introduce the process  $$\widebar{g}_t\left(s\right)=\begin{cases}
			X_s,&s\le t\\
			g_t\left(s\right),&s>t
		\end{cases}.$$
		Recall that by \eqref{G^b} $g_t\left(s\right)=g_0\left(s\right)+\int_{0}^{t}K\left(s-r\right)\dd Z_r,\,\mathbb{P}-$a.s. for a.e. $s>t$, and that by \eqref{2} $X_s=g_0\left(s\right)+\int_{0}^{s}K\left(s-r\right)\dd Z_r,\,\mathbb{P}-$a.s., for a.e. $s\in\left[0,t\right]$. Therefore $\widebar{g}_t\left(\cdot\right)$ is a jointly measurable modification of the process $g_0\left(\cdot\right)+\int_{0}^{t}1_{\left\{r\le \cdot\right\}}K\left(\cdot-r\right)\dd Z_r$. Invoking the stochastic Fubini's theorem in \cite[Theorem $65$, Chapter \upperRomannumeral{4}]{protter} and recalling the Riccati--Volterra equation in \eqref{Vp}, after a suitable change of variables we obtain 
		\begin{align*}
			\begin{split}
				&\int_{0}^{T}F\left(T-s,\psi\left(T-s\right)\right)^\top  \left(\widebar{g}_t-g_0\right)\left(s\right)\dd s\\
				&=
				\int_{0}^{T}F\left(T-s,\psi\left(T-s\right)\right)^\top \left[\int_{0}^{t}1_{\left\{r\le s\right\}}K\left(s-r\right)\dd Z_r\right]\dd s
				=\int_{0}^{t}\left[\int_{r}^{T}F\left(T-s,\psi\left(T-s\right)\right)^\top K\left(s-r\right)\dd s\right]\dd Z_r\\
				&=\int_{0}^{t} \left[\int_{0}^{T-r}F\left(s,\psi\left(s\right)\right)^\top K\left(T-r-s\right)\dd s\right]\dd Z_r
				=\int_{0}^{t}\psi\left(T-r\right)^\top \dd Z_r,\quad \mathbb{P}-\text{a.s.}
			\end{split}
		\end{align*}
		Such an application is legitimate, as by the continuity of $F\left(\cdot,\psi\left(\cdot\right)\right)$ --which implies its boundedness in $\left[0,T\right]$ by a positive constant $C_T$-- and a change of variables we have (for every $k=1,\dots,m$)
		\begin{multline*}
			\int_{0}^{t}\left[\int_{0}^{T}1_{\left\{r\le s\right\}}\left|F\left(T-s,\psi\left(T-s\right)\right)\right|^2\left|K\left(s-r\right)\right|^2\dd s\right]\left|X_{k,r}\right|\dd r\\=	
			\int_{0}^{t}\left[\int_{0}^{T-r}\left|F\left(s,\psi\left(s\right)\right)\right|^2\left|K\left(T-r-s\right)\right|^2\dd s\right]\left|X_{k,r}\right|\,\dd r
			\le C_T^2\norm{K}^2_{L^2\left(\left[0,T\right];\mathbb{R}^{m\times d}\right)}\norm{X}_{L^1\left(\left[0,t\right];\mathbb{R}^m\right)},
		\end{multline*}
		so the expectation of the leftmost side is finite thanks to \eqref{1.4}. As for the drift part,
		\begin{equation*}
			\int_{0}^{t}\left(\int_{0}^{T}1_{\left\{r\le s\right\}}\left|F\left(T-s,\psi\left(T-s\right)\right)\right|^2\left|K\left(s-r\right)\right|^2\dd s\right)^\frac{1}{2}\left|X_{k,r}\right|\dd r\le C_T \norm{K}_{L^2\left(\left[0,T\right];\mathbb{R}^{m\times d}\right)}	\norm{X}_{L^1\left(\left[0,t\right];\mathbb{R}^m\right)}.
		\end{equation*}
		Going back to \eqref{tt} and  recalling the definitions of $V^T$ in \eqref{y1}--\eqref{y2_2} we obtain, $\mathbb{P}-$a.s.,
		\begin{align*}
			\widetilde{V}^T_t
			&=\phi\left(T-t\right)+\int_{0}^{t}f\left(T-s\right)^\top X_s\,\dd s+\int_{t}^{T}F\left(T-s,\psi\left(T-s\right)\right)^\top  g_0\left(s\right)\dd s\\&
			\qquad+\int_{0}^t\psi\left(T-s\right)^\top \dd Z_s
			-\int_{0}^tF\left(T-s,\psi\left(T-s\right)\right)^\top \left(X_s-g_0\left(s\right)\right)\dd s\\
			&=\phi\left(T-t\right)+\int_{0}^{T}F\left(T-s,\psi\left(T-s\right)\right)^\top g_0\left(s\right)\dd s
			+\int_{0}^t\psi\left(T-s\right)^\top \dd Z_s\\&\qquad
			-\int_{0}^{t}\left[ B^\top \psi\left(T-s\right)+\frac{1}{2}A\left(\psi\left(T-s\right)\right)+\nu\left(e^{ \psi\left(T-s\right)^\top\xi}-1-\psi\left(T-s\right)^\top\xi\right)\right]^\top X_s\,\dd s\\			
			&=\phi\left(T\right)+\int_{0}^{T}F\left(T-s,\psi\left(T-s\right)\right)^\top g_0\left(s\right)+\int_{0}^{t}\psi\left(T-s\right)^\top\dd \widetilde{Z}_s\\&\qquad
			-\int_{0}^t\left[\frac{1}{2}\psi\left(T-s\right)^\top a\left(X_s\right)\psi\left(T-s\right)
			+\left\langle\eta\left(X_s,\dd \xi\right),e^{\psi\left(T-s\right)^\top\xi}-1-\psi\left(T-s\right)^\top\xi\right\rangle\right]\dd s
			\\&=V^T_t,
		\end{align*}
		where in the second--to--last equality we use \eqref{Z}. This proofs \eqref{thj4.3}.
		
		Moving on to the next assertion, denote by $H^T=\left(H^T_t\right)_{t\in\left[0,T\right]}=\left(\exp\left\{V_t^T\right\}\right)_{t\in\left[0,T\right]}$. By  {It\^o's formula} and the dynamics in \eqref{y1} we have
		\begin{align*}
			&dH^T_{t}\\&=H^T_{t-}\left[-\left(\frac{1}{2}\psi\left(T-t\right)^\top a\left(X_t\right)\psi\left(T-t\right)+	\left\langle\eta\left(X_t,\dd \xi\right), e^{ \psi\left(T-t\right)^\top \xi}-1- \psi\left(T-t\right)^\top\xi\right\rangle\right)\dd t +\psi\left(T-t\right)^\top \dd \widetilde{Z}_t\right]\\
			&+\frac{1}{2}H^T_{t-}\psi\left(T-t\right)^\top a\left(X_t\right)\psi\left(T-t\right) \dd t
			+H^T_{t-}\int_{\mathbb{R}^d}\left(e^{ \psi\left(T-t\right)^\top \xi}-1-\psi\left(T-t\right)^\top \xi\right)\mu\left(\dd t,\dd \xi\right)\\
			&=H_{t-}^T\left[\psi\left(T-t\right)^\top dM^c_t+\int_{\mathbb{R}^d}\left(e^{\psi\left(T-t\right)^\top \xi}-1\right)\left(\mu-\nu\right)\left(\dd t,\dd \xi\right)\right],\quad H^T_0=\exp\left(V_0^T\right).
		\end{align*}
		We define $\left(N^T_t\right)_{t\in\left[0,T\right]}$ such that $\dd N^T_t= \psi\left(T-t\right)^\top \dd M^c_t+\int_{\mathbb{R}^d}\left(e^{\psi\left(T-t\right)^\top\xi}-1\right)\left(\mu-\nu\right)\left(\dd t,\dd \xi\right)$. Then $N^T$ it is a local martingale and the previous computations show that $H^T=\exp\left\{V_0^T\right\}\mathcal{E}\left(N^T\right)$ up to evanescence, where $\mathcal{E}$ denotes the Dol\'eans--Dade exponential. Therefore $H^T$ is a local martingale, as stated. Finally, in case it is a true martingale, \eqref{final} directly follows from \eqref{thj4.3}, and the proof is complete.
	\end{proof}
	\begin{rem}
		Assuming $m=d$, it is possible to find an expression for $V^T$ in terms of the \emph{true} forward process even in the case $b\neq0$, when by \eqref{forward} in \ref{ap_A}
		\begin{equation}\label{just}
			\mathbb{E}\left[X_s\big|\mathcal{F}_t\right]=\left(g_0-\left(R_B\ast g_0\right)+\left(E_B\ast b_0\right)\right)\left(s\right)+\int_{0}^{t}E_B\left(s-r\right)\dd \widetilde{Z}_r, \quad \mathbb{P}-\text{a.s., for a.e. }s>t.
		\end{equation}
		Here $R_B$ is the resolvent of the second kind \footnotemark{} of $-KB$ and $E_B=K-R_B\ast K.$ If  
		$K$ is continuous on $\left(0,\infty\right)$, then $E_B$ is continuous on the same interval, as well. 
		\footnotetext{Given $K\in L^1_{\text{loc}}\left(\mathbb{R}_+;\mathbb{R}^{d\times d}\right)$, its \emph{resolvent of the second kind} is the unique solution $R\in L^1_{\text{loc}}\left(\mathbb{R}_+;\mathbb{R}^{d\times d}\right)$ of the two equations $K\ast R=R\ast K=K-R$ (cfr. \cite[Theorem $3.1$, Chapter $2$]{g} and the subsequent definition).}
		Thus,  one can choose a jointly measurable version $f_t\left(s\right),\,s>t$, of the process on the right side of \eqref{just}, which is unique up to a $\mathbb{P}\otimes \dd t-$null set. Arguing as in \cite[Lemma $4.4$]{sergio}, we obtain the variation of constants formula
		\[
		\psi\left(t\right)^\top=\int_{0}^{t}\left[f\left(s\right)+\frac{1}{2}A\left(\psi\left(s\right)\right)+\nu\left(e^{\psi\left(s\right)^\top \xi}-1-\psi\left(s\right)^\top\xi\right)\right]^\top E_B\left(t-s\right)\dd s,\quad t\ge0,
		\]
		which combined with the strategy in the proof of Theorem \ref{t6} leads to
		\begin{multline*}
			{V}_t^T=
			\int_{0}^{t}f\left(T-s\right)^\top X_s\,\dd s+\int_{t}^{T}\bigg[\left(
			F\left(T-s,\psi\left(T-s\right)\right)-B^\top\psi\left(T-s\right)
			\right)
			^\top f_t\left(s\right)\\
			+\frac{1}{2}\psi\left(T-s\right)^\top A_0\psi\left(T-s\right)+\int_{\mathbb{R}^d}\left(e^{ \psi\left(T-s\right)^\top \xi}-1- \psi\left(T-s\right)^\top\xi\right)\nu_0\left(\dd \xi\right)\bigg]
			\,\dd s,\quad \mathbb{P}-\text{a.s.}
		\end{multline*}
		However in the framework of jumps it is preferable to work with the adjusted forward process, because --as will become clear in the next section-- certain properties can be assumed for the kernel $K$, but they can be neither required (i.e. it would not be a reasonable hypothesis) nor inferred for $E_B$.
	\end{rem}
	\section{An expression for $V^T$ affine in the past trajectory of $X$}\label{sec:past}
	In this section we consider $m=d$ and aim to find an alternative formula for $V^T$ which is affine in the past trajectory of $X$. This new expression can be used to prove the martingale property of the complex--valued process $\exp\left\{V^T\right\}$ in particular cases (see Section \ref{sec:example}). Due to the lack of regularity of the trajectories of both $X$ and the stochastic convolution in $\dd Z$,  we are going to require mild, additional conditions on the kernel $K$, in particular on the shifted kernels $\Delta_hK$ for $h>0$.
	
	We start with a preliminary result providing an alternative expression for the adjusted forward process $g_t\left(\cdot\right)$.
	\begin{lemma}\label{l7}
		Assume that $K\in L^2_{\emph{loc}}\left(\mathbb{R}_+;\mathbb{R}^{d\times d}\right)$ is continuous on $\left(0,\infty\right)$ and  that it admits a resolvent of the first kind $L$ with no point masses in $\left(0,\infty\right).$ In addition, suppose that for every $h>0$ the shifted kernel $\Delta_hK$ is differentiable, with derivative $\left(\Delta_hK\right)'\in C\left(\mathbb{R}_+;\mathbb{R}^{d\times d}\right)$. Then, for every $T>0$, for every $t\in\left[0,T\right)$  
		\begin{equation}\label{conde}
			g_t\left(T\right)=g_0\left(T\right)
			+K\left(T-t\right)Z_t
			+\left(\left(\left(\Delta_{T-t}K\right)'\ast L\right)\ast \left(X-g_0\right)\right)\left(t\right),\quad \mathbb{P}-\text{a.s.}
		\end{equation}
	\end{lemma}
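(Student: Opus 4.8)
The plan is to reduce \eqref{conde}, for a fixed pair $(t,T)$ with $0\le t<T$, to an identity at the fixed time $t$, combining a stochastic Fubini argument with the resolvent identity $L\ast K=I$. Write $h:=T-t>0$. Since $g_t(T)=g_0(T)+\int_0^t K(T-r)\,\dd Z_r$ (by \eqref{G^b} with $s=T$) and $K(T-r)=(\Delta_h K)(t-r)$ for $r\in[0,t]$, it suffices to prove
\[
\left(\Delta_h K\ast \dd Z\right)(t)=K(T-t)Z_t+\left(\left(\left(\Delta_{T-t}K\right)'\ast L\right)\ast\left(X-g_0\right)\right)(t),\qquad\mathbb{P}\text{--a.s.}
\]
By hypothesis $\Delta_h K\in C^1\!\left(\mathbb{R}_+;\mathbb{R}^{d\times d}\right)$, so the fundamental theorem of calculus gives $\Delta_h K(s)=K(h)+\left(\left(\Delta_h K\right)'\ast I\right)(s)$; substituting this into the stochastic integral and invoking the stochastic Fubini theorem \cite[Theorem $65$, Chapter \upperRomannumeral{4}]{protter} --- legitimate because $\left(\Delta_h K\right)'$ is bounded on $[0,t]$ and, exactly as in the estimates leading to \eqref{1.4}, the drift, the continuous martingale part $M^c$ and the purely discontinuous part $M^d$ of $Z$ are all controlled on $[0,t]$ --- yields, at the fixed time $t$ and $\mathbb{P}$--a.s.,
\[
\left(\Delta_h K\ast \dd Z\right)(t)=K(h)\int_0^t\dd Z_r+\int_0^t\left(\Delta_h K\right)'(u)\,Z_{t-u}\,\dd u=K(T-t)Z_t+\left(\left(\Delta_{T-t}K\right)'\ast Z\right)(t).
\]

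It remains to identify $\left(\left(\Delta_{T-t}K\right)'\ast Z\right)(t)$ with $\left(\left(\left(\Delta_{T-t}K\right)'\ast L\right)\ast\left(X-g_0\right)\right)(t)$, for which the key intermediate fact is
\[
L\ast\left(X-g_0\right)=Z,\qquad \mathbb{P}\otimes\dd t\text{--a.e.}
\]
Indeed, $X-g_0=K\ast \dd Z$ by \eqref{2}; since $L$ has no point masses in $(0,\infty)$ it has the form allowed in Remark \ref{rem3} (a point mass at $0$ plus a locally integrable function), so the associativity of the stochastic convolution (Lemma \ref{4.1} together with that extension) gives $L\ast\left(K\ast \dd Z\right)=\left(L\ast K\right)\ast \dd Z$; and $L\ast K=I$ a.e., so Lemma \ref{l1} lets us replace it by $I\ast \dd Z$, whose value at time $s$ is $\int_0^s I\,\dd Z_r=Z_s$. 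Then a purely deterministic Fubini (valid $\mathbb{P}$--a.s. thanks to $X\in L^1_{\mathrm{loc}}$ and the continuity of $\left(\Delta_{T-t}K\right)'$) commutes the measure $L$ past the convolutions:
\[
\left(\left(\left(\Delta_{T-t}K\right)'\ast L\right)\ast\left(X-g_0\right)\right)(t)=\left(\left(\Delta_{T-t}K\right)'\ast\left(L\ast\left(X-g_0\right)\right)\right)(t)=\left(\left(\Delta_{T-t}K\right)'\ast Z\right)(t),
\]
the last equality using $L\ast(X-g_0)=Z$, which suffices here because the term appears integrated against $\left(\Delta_{T-t}K\right)'(t-\cdot)\,\dd s$ rather than pointwise. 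Combining the two displays proves \eqref{conde}.

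The main obstacle is making the two Fubini exchanges rigorous. The stochastic one must be applied at the fixed time $t$ with a separate verification of the integrability conditions for the drift, the $M^c$--part and the $M^d$--part of the special semimartingale $Z$, which is exactly where the a priori bound \eqref{1.4} and the local boundedness of the shifted kernels and of their derivatives enter; the deterministic one, and the chain establishing $L\ast(X-g_0)=Z$, require the resolvent $L$ to be a point mass at $0$ plus a locally integrable density, so that Lemma \ref{4.1} and its extension in Remark \ref{rem3} apply --- this is the role of the hypothesis that $L$ has no point masses in $(0,\infty)$, while the differentiability of $\Delta_h K$ is what makes the fundamental--theorem--of--calculus decomposition in the first step available. (Alternatively, one could feed $F=\Delta_h K$ directly into Proposition \ref{p5}, after checking that $\Delta_h K\ast L$ is locally absolutely continuous with $(\Delta_h K\ast L)'=K(h)\ell+(\Delta_h K)'\ast L$ and $(\Delta_h K\ast L)(0)=K(h)L(\{0\})$, and then reorganize the resulting expression using $L\ast(X-g_0)=Z$; this leads to the same identity.)
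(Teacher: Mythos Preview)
Your first step---using the $C^1$ regularity of $\Delta_h K$ together with a stochastic Fubini/integration-by-parts to obtain
\[
\left(\Delta_h K\ast \dd Z\right)(t)=K(h)Z_t+\left(\left(\Delta_h K\right)'\ast Z\right)(t)
\]
at the fixed time $t$---is essentially the same as the paper's opening move. The gap is in your second step, where you try to identify $\left(\left(\Delta_h K\right)'\ast Z\right)(t)$ with $\left(\left(\left(\Delta_h K\right)'\ast L\right)\ast(X-g_0)\right)(t)$ via the relation $L\ast(X-g_0)=Z$. To justify that relation you assert that ``since $L$ has no point masses in $(0,\infty)$ it has the form allowed in Remark~\ref{rem3} (a point mass at $0$ plus a locally integrable function)''. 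This inference is false: a measure with no atoms on $(0,\infty)$ may still carry a singular continuous part, so it need not be a Dirac at $0$ plus an $L^1_{\mathrm{loc}}$ density. Consequently Lemma~\ref{4.1} and its extension in Remark~\ref{rem3} do not apply, and your chain $L\ast(K\ast\dd Z)=(L\ast K)\ast\dd Z=Z$ is not available under the stated hypotheses. (That this extra structure on $L$ is a genuinely additional assumption is confirmed by the paper itself: Corollary~\ref{c9} imposes it explicitly, on top of the hypotheses of Theorem~\ref{main}/Lemma~\ref{l7}.) Your alternative route at the end has the same defect, since it still invokes $L\ast(X-g_0)=Z$ and writes $(\Delta_h K\ast L)'=K(h)\ell+(\Delta_h K)'\ast L$ with a density $\ell$.

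The paper avoids this obstacle entirely by applying Proposition~\ref{p5} not to $F=\Delta_h K$ but to $F=\Delta_h K - K(h)$. For this choice $F(0)=0$, so $(F\ast L)(0)=0$, and the only hypothesis needed is that $(\Delta_h K - K(h))\ast L$ be locally absolutely continuous with derivative $(\Delta_h K)'\ast L$; this follows from \cite[Corollary~7.3, Chapter~3]{g} using only the continuity of $(\Delta_h K)'$ and the absence of atoms of $L$ on $(0,\infty)$---no decomposition of $L$ is required. Proposition~\ref{p5} then gives directly
\[
\left(\left(\Delta_h K - K(h)\right)\ast \dd Z\right)_t=\left(\left(\left(\Delta_h K\right)'\ast L\right)\ast(X-g_0)\right)(t),
\]
first for a.e.\ $t$, and then for every $t$ because both sides are continuous processes (the left side by your own integration-by-parts display, the right side because $(\Delta_h K)'\ast L$ is continuous and $X-g_0\in L^1_{\mathrm{loc}}$). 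Adding back the constant piece $K(h)\ast\dd Z=K(h)Z$ yields \eqref{conde} at every $t\in[0,T)$. In short, the missing idea in your argument is to subtract $K(h)$ before invoking the resolvent, so that the problematic identity $L\ast(X-g_0)=Z$ is never needed.
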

	\begin{proof}
		Let us fix $h>0$. We first show that the stochastic convolution $\Delta_hK\ast \dd Z$ has a c\`adl\`ag version. Indeed, for every $t\in\mathbb{R}_+$, $\left(\Delta_hK\ast \dd Z\right)_t=\int_{0}^{t}f_{t,h}\left(s\right)\dd Z_s,\,\mathbb{P}-$a.s., with $f_{t,h}\left(s\right)= K\left(t+h-s\right),\,s\in\left[0,t\right]$. Integration by parts 
		yields
		\begin{multline*}
			\int_{0}^{t}f_{t,h}\left(s\right)\dd Z_s=
			f_{t,h}\left(t\right)Z_t-f_{t,h}\left(0\right)Z_0-\int_{0}^{t}\left({f_{t,h}}\right)'\left(s\right)Z_{s-}\,\dd s\\=\!K\left(h\right)Z_t+\!\int_{0}^{t}\left(\Delta_hK\right)'\left(t-s\right)Z_{s}\,\dd s,\quad \mathbb{P}-\text{a.s.},
		\end{multline*} 
		where we also note that $Z_{t-}=Z_t$ for a.e. $t>0,\,\mathbb{P}-$a.s. Since the rightmost side of the previous equality is a c\`adl\`ag process we obtain the desired claim. Hence in what follows we consider $\Delta_hK\ast \dd Z$ to be right--continuous. In particular, the process $\left(\Delta_hK-K\left(h\right)\right)\ast \dd Z$ is continuous.
		
		Thanks to the assumptions on the kernel, we apply \cite[Corollary $7.3$, Chapter $3$]{g} to claim that the function $\left(\Delta_hK-K\left(h\right)\right)\ast L$ is locally absolutely continuous in $\mathbb{R}_+$, with 
		\[
		\left(\left(\Delta_hK-K\left(h\right)\right)\ast L\right)'=\left(\Delta_hK\right)'\ast L,\quad \text{a.e. in }\mathbb{R}_+.
		\]
		In particular, the function $\left(\Delta_h K\right)'\ast L\in C\left(\mathbb{R}_+;\mathbb{R}^{d\times d}\right)$ by \cite[Corollary $6.2$\,(\lowerRomannumeral{3}), Chapter $3$]{g}, the absence of point masses of $L$ in $\left(0,\infty\right)$ and the continuity of $\left(\Delta_h K\right)'$. Therefore we invoke Proposition \ref{p5} to obtain
		\begin{align*}
			\left(\left(\Delta_hK-K\left(h\right)\right)\ast \dd Z\right)_t
			&=\left(\left(\Delta_hK-K\left(h\right)\right)\ast L\right)\left(0\right)\left(X-g_0\right)\left(t\right)+\left(\left(\left(\Delta_hK\right)'\ast L\right)\ast \left(X-g_0\right)\right)\left(t\right)\\
			&=\left(\left(\left(\Delta_hK\right)'\ast L\right)\ast \left(X-g_0\right)\right)\left(t\right),\quad \text{ for a.e. }t\in\mathbb{R}_+,\,\mathbb{P}-\text{a.s.}
		\end{align*}
		Note that the last equality involves continuous processes, so it is indeed true for every $t\ge0$ up to a $\mathbb{P}-$null set. Thus,
		\begin{equation}\label{d}
			\left(\Delta_hK\ast \dd Z\right)_t=
			K\left(h\right)Z_{t}
			+\left(\left(\left(\Delta_hK\right)'\ast L\right)\ast \left(X-g_0\right)\right)\left(t\right),\quad t\ge0,\,\mathbb{P}-\text{a.s.}
		\end{equation}
		At this point, let us take $t<T$ and recall that (by \eqref{G^b})
		\begin{multline*}
			g_t\left(T\right)=
			g_0\left(T\right)+\int_{0}^{t}K\left(T-s\right)\dd Z_s=g_0\left(T\right)+\int_{0}^{t}\left(\Delta_{T-t} K\right)\left(t-s\right)\dd Z_s\\
			=g_0\left(T\right)+\left(\Delta_{T-t}K\ast \dd Z\right)_t,\quad \mathbb{P}-\text{a.s.}
		\end{multline*}
		It suffices to take $h=T-t$ in \eqref{d} to deduce that
		\[
		\left(\Delta_{T-t}K\ast \dd Z\right)_t=K\left(T-t\right)Z_t+\left(\left(\left(\Delta_{T-t}K\right)'\ast L\right)\ast \left(X-g_0\right)\right)\left(t\right),\quad \mathbb{P}-\text{a.s.}.
		\] 
		Hence, combining the two previous equations, we conclude
		\begin{equation*}
			g_t\left(T\right)=g_0\left(T\right)
			+K\left(T-t\right)Z_t
			+\left(\left(\left(\Delta_{T-t}K\right)'\ast L\right)\ast \left(X-g_0\right)\right)\left(t\right),\quad \mathbb{P}-\text{a.s.},
		\end{equation*}
		completing the proof.
	\end{proof}
	Let us fix a generic $T>0$. 
	By Equation \eqref{conde} we can write, for every $t\in\left[0,T\right)$,
	\begin{equation}\label{pain}
		g_t\left(s\right)=g_0\left(s\right)+K\left(s-t\right)Z_t
		+\left(\left(\left(\Delta_{s-t}K\right)'\ast L\right)\ast \left(X-g_0\right)\right)\left(t\right),\quad \mathbb{P}-\text{a.s.},\,s\in\left(t,T\right).
	\end{equation}
	Intuitively speaking, we want to plug this expression in \eqref{y_tilde}, so that we end up with an alternative formulation for $V^T_t$ which is an affine function on the past trajectory $\left\{X_s, s\le t\right\}$.  This is done in the next theorem, which extends \cite[Theorem $4.5$]{sergio} under further conditions on the kernel $K$. These addtional assumptions hold for instance in the one--dimensional case if $K$ is completely monotone (recall that a function $f$ is called completely monotone on $(0,\infty)$ if it is
infinitely differentiable there with $(-1)^kf^{(k)}(t) \ge 0$ for all $t > 0$ and $k = 0, 1, \ldots$).
	\begin{theorem}\label{main}
		Assume that $K\in L^2_\emph{loc}\left(\mathbb{R}_+;\mathbb{R}^{d\times d}\right)$ is continuous on $\left(0,\infty\right)$ and  that it admits a resolvent of the first kind $L$ with no point masses in $\left(0,\infty\right)$.  In addition, suppose that for every $h>0$ the shifted kernel $\Delta_hK$ is differentiable, with $\left(\Delta_hK\right)'$ continuous on $\mathbb{R}_+$. Under Hypothesis \ref{hyp1}, if the total variation bound 
		\begin{equation}\label{4.15}
			\sup_{h\in\left(0,\widebar{T}\right]}\norm{\Delta_hK \ast L}_{\text{TV}\left(\left[0,\widebar{T}\right]\right)}<\infty,\quad \text{for all }\widebar{T}>0,
		\end{equation}
		holds, then for every $h>0$ the $\mathbb{C}^d-$valued function 
		\begin{equation}\label{pi}
			\pi_h\left(r\right)= \left(F\left(\cdot,\psi\left(\cdot\right)\right)^\top \ast\left(\left(\Delta_\cdot K\right)'\ast L\right)\left(r\right)\right)\left(h\right)^\top 
		\end{equation}
		is well--defined for a.e. $r\in\mathbb{R}_+$ and belongs to $L_\emph{loc}^1\left(\mathbb{R}_+;\mathbb{C}^d\right)$. Moreover, $\mathbb{P}-$a.s., for a.e. $t\in \left(0,T\right)$,
		\begin{multline}\label{past}
			V^T_t=\phi\left(T-t\right)+\int_{0}^{t}f\left(T-s\right)^\top X_s\,\dd s+\int_{0}^{T-t}F\left(s,\psi\left(s\right)\right)^\top g_0\left(T-s\right)\dd s\\
			+\psi\left(T-t\right)^\top Z_t+\left({\pi_{T-t}}^\top \ast \left(X-g_0\right) \right)\left(t\right),
		\end{multline}
		where $\phi$ is defined in \eqref{phi}.
	\end{theorem}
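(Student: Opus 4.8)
The plan is to start from the identity $V^T_t=\widetilde V^T_t$ established in Theorem~\ref{t6}, substitute into the definition \eqref{y_tilde} the representation \eqref{pain} of the adjusted forward process furnished by Lemma~\ref{l7}, and then carry out the resulting deterministic manipulations. Writing $\Psi_h:=(\Delta_h K)'\ast L$ for brevity, \eqref{pain} gives, for a.e.\ $t\in(0,T)$ and $\mathbb{P}$-a.s., that $\int_t^TF(T-s,\psi(T-s))^\top g_t(s)\,\dd s$ splits as $I_1(t)+I_2(t)+I_3(t)$, with $I_1(t)=\int_t^TF(T-s,\psi(T-s))^\top g_0(s)\,\dd s$, $I_2(t)=\bigl(\int_t^TF(T-s,\psi(T-s))^\top K(s-t)\,\dd s\bigr)Z_t$, and $I_3(t)=\int_t^TF(T-s,\psi(T-s))^\top\bigl(\Psi_{s-t}\ast(X-g_0)\bigr)(t)\,\dd s$. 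Each integral over $s$ is finite: for $I_1$ from $g_0\in L^1_{\mathrm{loc}}$ and the boundedness of $F(\cdot,\psi(\cdot))$ on $[0,T]$; for $I_2$ additionally from $K\in L^1_{\mathrm{loc}}$; for $I_3$ from the estimates in the third paragraph. Once the split is justified, combining it with $V^T_t=\widetilde V^T_t$ and the two leading terms of \eqref{y_tilde} produces exactly \eqref{past}, provided that $I_1(t)=\int_0^{T-t}F(s,\psi(s))^\top g_0(T-s)\,\dd s$, $I_2(t)=\psi(T-t)^\top Z_t$, and $I_3(t)=(\pi_{T-t}^\top\ast(X-g_0))(t)$.

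The first two identities are elementary changes of variables. For $I_1$ one substitutes $s\mapsto T-s$. For $I_2$, substituting $v=s-t$ and then $w=T-t-v$ turns the deterministic factor into $\int_0^{T-t}F(w,\psi(w))^\top K(T-t-w)\,\dd w$, which equals $\psi(T-t)^\top$ by the Riccati--Volterra equation \eqref{Vp} evaluated at $T-t$; hence $I_2(t)=\psi(T-t)^\top Z_t$.

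The term $I_3$ carries the real work. After the substitution $v=s-t$ and expansion of the inner convolution, $I_3(t)=\int_0^{T-t}F(T-t-v,\psi(T-t-v))^\top\bigl(\int_0^t\Psi_v(t-w)(X-g_0)(w)\,\dd w\bigr)\dd v$, and Fubini's theorem in $v$ and $w$ identifies, after swapping, the inner $v$-integral with $\pi_{T-t}(t-w)^\top$ as defined in \eqref{pi}, giving $I_3(t)=(\pi_{T-t}^\top\ast(X-g_0))(t)$. To make this rigorous I would first prove the deterministic estimate that, for every $\bar T>0$ and every $v\in(0,\bar T]$, $\int_0^{\bar T}|\Psi_v(r)|\,\dd r=\|(\Delta_vK-K(v))\ast L\|_{\mathrm{TV}([0,\bar T])}\le\|\Delta_vK\ast L\|_{\mathrm{TV}([0,\bar T])}+|K(v)|\,|L|([0,\bar T])$; the first equality is because $(\Delta_vK-K(v))\ast L$ is locally absolutely continuous with a.e.\ derivative $\Psi_v$, exactly as in the proof of Lemma~\ref{l7} via \cite[Corollary~7.3, Chapter~3]{g}. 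Integrating in $v$ over $(0,h)$ and using \eqref{4.15} together with $K\in L^1_{\mathrm{loc}}$ shows that $r\mapsto\int_0^h|\Psi_v(r)|\,\dd v$ lies in $L^1_{\mathrm{loc}}$; since $F(\cdot,\psi(\cdot))$ is locally bounded this yields that $\pi_h$ is well defined for a.e.\ $r$ and belongs to $L^1_{\mathrm{loc}}(\mathbb{R}_+;\mathbb{C}^d)$. For the Fubini step in $I_3$ I would bound the expectation of the associated absolute-value double integral: by Tonelli and the local boundedness of $F(\cdot,\psi(\cdot))$ it is controlled, up to a constant, by $\int_0^t\bigl(\int_0^T|\Psi_v(r)|\,\dd v\bigr)\mathbb{E}\bigl[|(X-g_0)(t-r)|\bigr]\dd r\le C\,(G\ast h)(t)$, where $G(r)=\int_0^T|\Psi_v(r)|\,\dd v$ (which dominates $\int_0^{T-t}|\Psi_v(r)|\,\dd v$ since $T-t\le T$) and $h(w)=\mathbb{E}[|(X-g_0)(w)|]$ are both in $L^1([0,T])$ — the former by the estimate above, the latter by \eqref{1.4} and $g_0\in L^1_{\mathrm{loc}}$. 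The convolution of two $L^1$ functions being finite a.e., this is finite for a.e.\ $t\in(0,T)$, which simultaneously validates Fubini $\mathbb{P}$-a.s.\ and shows $(\pi_{T-t}^\top\ast(X-g_0))(t)$ is $\mathbb{P}$-a.s.\ finite for a.e.\ $t$.

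The main obstacle is precisely this last point: for singular (e.g.\ rough) kernels neither $\pi_h\in L^1_{\mathrm{loc}}$ nor the Fubini exchange in $I_3$ survives naive domination, because $(\Delta_vK)'$ blows up as $v\downarrow0$; it is the total variation bound \eqref{4.15}, uniform in the shift parameter, that keeps $\int_0^h\|(\Delta_vK-K(v))\ast L\|_{\mathrm{TV}}\,\dd v$ finite and rescues the argument. A secondary, bookkeeping difficulty is the careful treatment of the ``$\mathbb{P}\otimes\dd t$-a.e.''\ and ``for a.e.\ $s$''\ qualifiers attached to \eqref{pain}, \eqref{2} and the stochastic convolutions, and the consistent choice of jointly measurable modifications, so that \eqref{past} is a genuine $\mathbb{P}\otimes\dd t$-a.e.\ statement; these are routine given the machinery of Section~\ref{sec:prelim}.
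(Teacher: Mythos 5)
Your proposal is correct and follows essentially the same route as the paper: start from $V^T_t=\widetilde V^T_t$, substitute Lemma~\ref{l7} into \eqref{y_tilde} to obtain the same three--term decomposition, collapse the $Z_t$ factor via the Riccati--Volterra equation \eqref{Vp}, and control both $\pi_h\in L^1_{\mathrm{loc}}$ and the Fubini exchange in the third term through the uniform TV bound \eqref{4.15} together with $K\in L^1_{\mathrm{loc}}$ and \eqref{1.4}. The only cosmetic divergence is that the paper additionally records the primitive $\Pi_h$ at this stage (for later use in Corollary~\ref{c9}) while you bound directly by the a.e.\ finiteness of a convolution of two $L^1$ functions; these are equivalent.
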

	\begin{proof}
		Fix $h>0$; expanding the notation in \eqref{pi} for $\pi_h$ we have
		\begin{align*}
			\pi_h\left(r\right)^\top&=\int_0^hF\left(s,\psi\left(s\right)\right)^\top\left[\int_{0}^r \left(\Delta_{h-s} K\right)'\left(r-u\right)L\left(\dd u\right)\right]\dd s.
		\end{align*}
		In order to see that it is well--defined a.e. on $\mathbb{R}_+$ and belongs to $L^1_{\text{loc}}\left(\mathbb{R}_+;\mathbb{C}^d\right)$, first note that for every positive $s$, the  continuity of $\left(\Delta_sK\right)'$ and the absence of point masses for $L$ in $\left(0,\infty\right)$ allow to apply \cite[Corollary $6.2$\,(\lowerRomannumeral{3}), Chapter $3$]{g}, which ensures the continuity on $\mathbb{R}_+$ of $\left(\Delta_sK\right)'\ast L$. As a consequence, we can define the $\mathbb{C}^d-$valued measurable function
		\[
		\left[F\left(s,\psi\left(s\right)\right)^\top\left(\left(\Delta_{h-s}K\right)'\ast L\right)\left(r\right)\right]^\top ,\quad \left(s,r\right)\in\left(0,h\right)\times\mathbb{R}_+.
		\]
		Recalling the previous proof, we see that $\left(\Delta_hK\right)'\ast L$ is, almost everywhere, the derivative of the locally absolutely continuous function $\left(\Delta_hK-K\left(h\right)\right)\ast L$. The continuity of $F\left(\cdot,\psi\left(\cdot\right)\right)$ (which implies its boundedness by a constant $C_h>0$ on $\left[0,h\right]$) coupled with Condition \eqref{4.15}, Tonelli's theorem and \cite[Theorem $6.1$\,(\lowerRomannumeral{5}), Chapter $3$]{g} yields, for a generic $\widebar{T}>h$,
		\begin{align*}\begin{split}
				&\int_{0}^{\widebar{T}}\left[\int_{0}^{h}\left|F\left(s,\psi\left(s\right)\right)\right|\left|\left(\left(\Delta_{h-s}K\right)'\ast L\right)\left(r\right)\right|\dd s\right]\dd r
				=\int_{0}^h\left|F\left(s,\psi\left(s\right)\right)\right|\left[\int_{0}^{\widebar{T}}\left|\left(\left(\Delta_{h-s}K\right)'\ast L\right)\left(r\right)\right|\dd r\right]\dd s\\
				&\le d^2
				\int_{0}^h\left|F\left(s,\psi\left(s\right)\right)\right|\left[\norm{\left(\Delta_{h-s}K-K\left(h-s\right)\right)\ast L}_{\text{TV}\left(\left[0,\widebar{T}\right]\right)}\right]\dd s
				\\
				&\le d^2 \left[\int_{0}^{h}\left|F\left(s,\psi\left(s\right)\right)\right|\norm{\Delta_{h-s}K\ast L}_{\text{TV}\left(\left[0,\widebar{T}\right]\right)} \dd s
				+\left|L\right|\left(\left[0,\widebar{T}\right]\right)\int_{0}^{h}\left|F\left(s,\psi\left(s\right)\right)\right|\left|K\left(h-s\right)\right|\dd s\right]
				\\ &\le d^2
				\left[\sup_{s\in\left(0,\widebar{T}\right]}\norm{\Delta_{s}K\ast L}_{\text{TV}\left(\left[0,\widebar{T}\right]\right)}C_h h+
				\left|L\right|\left(\left[0,\widebar{T}\right]\right)\int_{0}^{h}\left|F\left(s,\psi\left(s\right)\right)\right|\left|K\left(h-s\right)\right|\dd s\right]<\infty.
			\end{split}
		\end{align*}
		Hence the conclusion on $\pi_h$ follows. Furthermore, by {Lebesgue's fundamental theorem of calculus}, the   $\mathbb{C}^d-$valued function $\Pi_h\left(r\right)=\int_{0}^{r}\pi_h\left(u\right)\dd u,\,r\in\mathbb{R}_+$, is locally absolutely continuous on $\mathbb{R}_+$, with $\Pi_h'=\pi_h$ a.e. Using Fubini's theorem we can obtain the following explicit expression for such $\Pi_h$
		\begin{equation}\label{PI}
			\Pi_h\left(r\right)^\top=\int_{0}^{h}F\left(s,\psi\left(s\right)\right)^\top \left(\left(\Delta_{h-s}K-K\left(h-s\right)\right)\ast L\right)\left(r\right)\dd s,\quad r\in\mathbb{R}_+.
		\end{equation}
		At this point we observe that for every  function $g\in L^1_\text{loc}\left(\mathbb{R}_+;\mathbb{R}^d\right)$ we have, reasoning as before and using the boundedness of $F\left(\cdot,\psi\left(\cdot\right)\right)$ on $\left[0,T\right]$ by a positive constant $C_T$,
		\begin{align}\label{omg}
			\begin{split}
				&\int_{0}^{T}\left[\int_{0}^{t}\left|g\left(t-u\right)\right|\left(\int_{0}^{T-t}\left|F\left(s,\psi\left(s\right)\right)\right|\left|\left(\left(\Delta_{T-t-s}K\right)'\ast L\right)\left(u\right)\right|\dd s\right)\dd u\right]\dd t\\
				&=\int_{0}^{T}\left[\int_{0}^{t}\left|g\left(t-u\right)\right|\left(\int_{0}^{T-t}\left|F\left(T-t-s,\psi\left(T-t-s\right)\right)\right|\left|\left(\left(\Delta_{s}K\right)'\ast L\right)\left(u\right)\right|\dd s\right)\dd u\right]\dd t\\
				&=
				\int_{0}^{T}\left[\int_{0}^{T-t}\left|F\left(T-t-s,\psi\left(T-t-s\right)\right)\right|\left(\int_{0}^{t}\left|g\left(t-u\right)\right|\left|\left(\left(\Delta_{s}K\right)'\ast L\right)\left(u\right)\right|\dd u\right)\dd s\right]\dd t\\
				&=
				\int_{0}^{T}\left[\int_{0}^{T-s}\left|F\left(T-s-t,\psi\left(T-s-t\right)\right)\right|\left(\int_{0}^{t}\left|g\left(t-u\right)\right|\left|\left(\left(\Delta_{s}K\right)'\ast L\right)\left(u\right)\right|\dd u\right)\dd t\right]\dd s\\
				&\le C_Td^2\norm{g}_{L^1\left(\left[0,T\right];\mathbb{R}^d\right)}\int_{0}^{T}\norm{\left(\Delta_{s}K-K\left(s\right)\right)\ast L}_{\text{TV}\left(\left[0,T\right]\right)}\dd s
				\\&\le C_Td^2\norm{g}_{L^1\left(\left[0,T\right];\mathbb{R}^d\right)}\left[T\sup_{s\in\left(0,T\right]}\norm{\Delta_sK\ast L}_{\text{TV}\left(\left[0,T\right]\right)}+\left|L\right|\left(\left[0,T\right]\right)\int_{0}^{T}\left|K\left(s\right)\right|\dd s\right]<\infty,
			\end{split}
		\end{align}
		where we apply Tonelli's theorem, together with \cite[Theorem $2.2$\,(\lowerRomannumeral{1}), Chapter $2$]{g} and a change of variables. Consequently, for almost every $t\in\left(0,T\right)$ we can apply Fubini's theorem to obtain
		\begin{equation}\label{pi'}
			\int_{0}^{t}\pi_{T-t}\left(u\right)^\top g\left(t-u\right)\dd u
			=\int_{0}^{T-t}F\left(s,\psi\left(s\right)\right)^\top \left[\int_0^{t}\left(\left(\Delta_{T-t-s}K\right)'\ast L\right)\left(u\right)g\left(t-u\right)\dd u\right]\dd s.
		\end{equation}
		Computations analogous to those in \eqref{omg} (with $g$ [resp., $\left|F\left(\cdot,\psi	\left(\cdot\right)\right)\right|$] substituted by $X-g_0$ [resp., $1$]) let us conclude by Fubini's theorem and Equation \eqref{1.4} that for a.e. $t\in\left(0,T\right)$ there is a jointly measurable modification of the process $\left(\left(\left(\Delta_{\cdot-t}K\right)'\ast L\right)\ast \left(X-g_0\right)\right)\left(t\right)$ on $\Omega\times\left(t,T\right)$. Therefore we interpret \eqref{pain} trajectoriwise, namely the equality holds almost everywhere in $\left(t,T\right)$ up to a $\mathbb{P}-$null set.
		
		Now we focus on $\widetilde{V}_t^T$. Combining \eqref{y_tilde} with what we have just said, a suitable change of variables yields 
		\begin{align*}
			\widetilde{V}_t^T&=\phi\left(T-t\right)+\int_{0}^{t}f\left(T-s\right)^\top X_s\,\dd s
			+\int_{0}^{T-t}F\left(s,\psi\left(s\right)\right)^\top g_t\left(T-s\right)\dd s\notag\\
			&=\left\{\phi\left(T-t\right)+\int_{0}^{t}f\left(T-s\right)^\top X_s\,\dd s+\int_{0}^{T-t}
			F\left(s,\psi\left(s\right)\right)^\top g_0\left(T-s\right)
			\dd s\right\}\notag\\\notag
			&\quad+\left\{\left(\int_{0}^{T-t}F\left(s,\psi\left(s\right)\right)^\top K\left(T-t-s\right)\dd s\right)Z_t\right\}\\&\quad\quad + \left\{\int_{0}^{T-t}F\left(s,\psi\left(s\right)\right)^\top \left(\left(\left(\Delta_{T-t-s}K\right)'\ast L\right)\ast \left(X-g_0\right)\right)\left(t\right)\dd s\right\} \notag
			\\&= {\bf \upperRomannumeral{1}}_t+{\bf \upperRomannumeral{2}}_t+{\bf \upperRomannumeral{3}}_t,\quad \mathbb{P}-\text{a.s., }t\in\left(0,T\right).	
		\end{align*}
		The idea is to analyze separately the addends that we have singled out in the previous computations.
		Note that ${\bf \upperRomannumeral{3}}_t$ is finite because  $\widetilde{V}_t^T, {\bf \upperRomannumeral{1}}_t,{\bf \upperRomannumeral{2}}_t$ are so,  and that we can consider a jointly measurable modification of this process in $\Omega\times \left(0,T\right)$, again by Fubini's theorem and Equation \eqref{1.4} (see \eqref{omg}). 		
		Taking into account \eqref{thj4.3} we have
		\begin{equation}\label{path}
			V_t^T=	{\bf \upperRomannumeral{1}}_t+	{\bf \upperRomannumeral{2}}_t+	{\bf \upperRomannumeral{3}}_t,\quad  \text{for a.e. }t\in\left(0,T\right),\,\mathbb{P}-\text{a.s.},
		\end{equation}
		where the equality can be understood pathwise as it involves jointly measurable processes. \\
		Regarding ${\bf \upperRomannumeral{2}}_t$, since $\psi$ solves the Riccati--Volterra equation in \eqref{Vp} we have
		\[
		{\bf \upperRomannumeral{2}}_t=\psi\left(T-t\right)^\top Z_t.
		\]
		As for ${\bf \upperRomannumeral{3}}_t$, by \eqref{pi'} we have 
		\begin{equation*}
			{\bf \upperRomannumeral{3}}_t
			=\int_{0}^{t}\pi_{T-t}\left(u\right)^\top \left(X-g_0\right)\left(t-u\right)\dd u=\left({\pi_{T-t}}^\top \ast \left(X-g_0\right)\right)\left(t\right),\quad \text{for a.e. $t\in\left(0,T\right), \,\mathbb{P}-$a.s.}
		\end{equation*}
		Substituting the two previous equations in \eqref{path} we conclude
		\begin{multline*}
			{V}_t^T=\phi\left(T-t\right)+\int_{0}^{t}f\left(T-s\right)^\top X_s\,\dd s+\int_{0}^{T-t}
			F\left(s,\psi\left(s\right)\right)^\top g_0\left(T-s\right)
			\dd s\\
			+\psi\left(T-t\right)^\top Z_t+\left( {\pi_{T-t}}^\top  \ast \left(X-g_0\right)\right)\left(t\right)
			,
		\end{multline*}
		for almost every  $t\in\left(0,T\right),\, \mathbb{P}-$a.s. 
		The proof is now complete.
	\end{proof}
	If the resolvent of the first kind $L$ is the sum of a locally integrable function and a point mass in $0$, then recalling \eqref{2} we can apply Lemma \ref{4.1} (see also the final comment in Remark \ref{rem3}) and argue as in \eqref{at_last} to see that $Z_{t}=\left(L\ast\left(X-g_0\right) \right)\left(t\right),$ for a.e. $t>0$, $\mathbb{P}-$a.s.  In addition,  for every $h>0$  we define the $\mathbb{C}^d-$valued function 
	\begin{equation}\label{PI_tilde}
		\widetilde{\Pi}_h\left(r\right)^\top=\Pi_h\left(r\right)^\top+\psi\left(h\right)^\top L\left(\left\{0\right\}\right)+\psi\left(h\right)^\top L\left(\left(0,r\right]\right)=\int_{0}^{h}F\left(s,\psi\left(s\right)\right)^\top\left( \Delta_{h-s} K\ast L\right)\left(r\right)\dd s,\quad r\in\mathbb{R}_+,
	\end{equation}
	where the second equality is due to \eqref{PI}. Note that $\widetilde{\Pi}_h$ is locally absolutely continuous on $\mathbb{R}_+$, and that
	\begin{multline*}
		\left( {\pi_{T-t}}^\top \ast\left(X-g_0\right) \right)\left(t\right)=
		\left( \dd{\Pi_{T-t}}^\top \ast\left(X-g_0\right)\right)\left(t\right)\\
		=\left({\dd \widetilde{{\Pi}}_{T-t}}^\top\ast \left(X-g_0\right)\right)\left(t\right)-\psi\left(T-t\right)^\top \left(L\ast\left(X-g_0\right) \right)\left(t\right)+\psi\left(T-t\right)^\top L\left(\left\{0\right\}\right)\left(X-g_0\right)\left(t\right), 
	\end{multline*}
	holding true for a.e. $t\in\left(0,T\right),\,\mathbb{P}-$a.s.
	Substituting in \eqref{past} we immediately deduce the following result.
	\begin{corollary}\label{c9}
		Under the same hypotheses of Theorem \ref{main}, if the resolvent of the first kind $L$ is the sum of  a locally integrable function and a point mass in $0$, then $\mathbb{P}-$a.s., for a.e. $t\in\left(0,T\right)$  
		\begin{multline}\label{past1}
			V^T_t=\phi\left(T-t\right)+\int_{0}^{t}f\left(T-s\right)^\top X_s\,\dd s+\int_{0}^{T-t}
			F\left(s,\psi\left(s\right)\right)^\top g_0\left(T-s\right)
			\dd s\\+
			\psi\left(T-t\right)^\top L\left(\left\{0\right\}\right)\left(X-g_0\right)\left(t\right)
			-\left({\dd\widetilde{\Pi}_{T-t}}^\top \ast g_0\right)\left(t\right)+\left({\dd\widetilde{\Pi}_{T-t}}^\top \ast X\right)\left(t\right).
		\end{multline}
	\end{corollary}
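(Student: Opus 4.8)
The plan is to obtain \eqref{past1} by a direct algebraic rewriting of the representation \eqref{past} furnished by Theorem~\ref{main}, using the extra structural hypothesis on the resolvent $L$. Concretely, the only two terms in \eqref{past} that are not already present verbatim in \eqref{past1} are $\psi(T-t)^\top Z_t$ and $\bigl(\pi_{T-t}^\top\ast(X-g_0)\bigr)(t)$, and it is these that must be transformed. The first ingredient is the identity $Z_t=\bigl(L\ast(X-g_0)\bigr)(t)$, valid for a.e.\ $t>0$, $\mathbb{P}$-a.s. Starting from $X-g_0=K\ast\dd Z$ (which is \eqref{2}) and convolving on the left with $L$, the associativity of the stochastic convolution applies in the form indicated in the last part of Remark~\ref{rem3} — which is legitimate precisely because $L$ is assumed to be a locally integrable function plus a point mass at $0$ — so that $\bigl(L\ast(X-g_0)\bigr)(t)=\bigl((L\ast K)\ast\dd Z\bigr)_t=(I\ast\dd Z)_t=Z_t$; the integrability making the convolutions legitimate and the passage to trajectories valid is the one used throughout, namely \eqref{1.4} and an estimate as in \eqref{at_last}.

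The second ingredient is the relation between $\pi_h$ and $\widetilde\Pi_h$. Since $\Pi_h$ is locally absolutely continuous with $\Pi_h'=\pi_h$ a.e.\ (established in the proof of Theorem~\ref{main}), one has $\bigl(\pi_{T-t}^\top\ast(X-g_0)\bigr)(t)=\bigl(\dd\Pi_{T-t}^\top\ast(X-g_0)\bigr)(t)$. Differentiating \eqref{PI_tilde} in $r$ — the additive constant $\psi(h)^\top L(\{0\})$ drops out, and $r\mapsto\psi(h)^\top L((0,r])$ generates the atomless measure $\psi(h)^\top\bigl(L-L(\{0\})\delta_0\bigr)$, this being just the elementary fact that the Lebesgue--Stieltjes measure of $L((0,\cdot])$ is the restriction of $L$ to $(0,\infty)$ — yields the identity of $\mathbb{C}^{1\times d}$-valued measures $\dd\widetilde\Pi_h=\dd\Pi_h+\psi(h)^\top\bigl(L-L(\{0\})\delta_0\bigr)$. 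Convolving this against $X-g_0$ and using $\bigl((L-L(\{0\})\delta_0)\ast(X-g_0)\bigr)(t)=Z_t-L(\{0\})(X-g_0)(t)$ gives
\[
\bigl(\pi_{T-t}^\top\ast(X-g_0)\bigr)(t)=\bigl(\dd\widetilde\Pi_{T-t}^\top\ast(X-g_0)\bigr)(t)-\psi(T-t)^\top Z_t+\psi(T-t)^\top L(\{0\})(X-g_0)(t),
\]
for a.e.\ $t\in(0,T)$, $\mathbb{P}$-a.s.

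It then remains only to assemble the pieces: adding $\psi(T-t)^\top Z_t$ to both sides of the last display makes the two $Z_t$ terms cancel, splitting $\bigl(\dd\widetilde\Pi_{T-t}^\top\ast(X-g_0)\bigr)(t)=\bigl(\dd\widetilde\Pi_{T-t}^\top\ast X\bigr)(t)-\bigl(\dd\widetilde\Pi_{T-t}^\top\ast g_0\bigr)(t)$ (each convolution being separately well defined, by the local absolute continuity of $\widetilde\Pi_h$, \eqref{1.4} and a Tonelli/Young argument as in \eqref{omg}), and substituting into \eqref{past} produces exactly \eqref{past1}. All equalities throughout are read ``for a.e.\ $t\in(0,T)$, $\mathbb{P}$-a.s.'', and this form is preserved since only jointly measurable processes are involved.

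I expect the only genuinely delicate point to be the bookkeeping with measures rather than functions, that is: checking carefully that the extension of Lemma~\ref{4.1} described in Remark~\ref{rem3} does cover $L$ (which is exactly why the hypothesis on $L$ is imposed here and not in Theorem~\ref{main}); that the Lebesgue--Stieltjes differentiation of \eqref{PI_tilde} is performed correctly, in particular that no atoms are created in $(0,\infty)$; and that the individual convolutions $\dd\widetilde\Pi_{T-t}^\top\ast X$ and $\dd\widetilde\Pi_{T-t}^\top\ast g_0$ are well defined for a.e.\ $t$, using the total variation bound \eqref{4.15} together with \eqref{1.4}. No estimate beyond those already in the proof of Theorem~\ref{main} is required; the corollary is essentially a reorganization of \eqref{past} under the additional assumption on $L$.
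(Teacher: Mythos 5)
Your proposal is correct and follows essentially the same route as the paper: you first prove $Z_t=(L\ast(X-g_0))(t)$ via the extended associativity of Lemma~\ref{4.1} (as in Remark~\ref{rem3}), then identify $\dd\widetilde{\Pi}_h^{\top}-\dd\Pi_h^{\top}=\psi(h)^{\top}(L-L(\{0\})\delta_0)$ on $(0,\infty)$, convolve with $X-g_0$, and substitute into \eqref{past} so the two $\psi(T-t)^{\top}Z_t$ terms cancel. This is exactly the paper's argument; your additional comments about the convention for $\dd\widetilde{\Pi}_h$ and the well-definedness of the separate convolutions $\dd\widetilde{\Pi}_{T-t}^{\top}\ast X$ and $\dd\widetilde{\Pi}_{T-t}^{\top}\ast g_0$ are sensible bookkeeping, not a different method.
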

	\section{The ${1}-$dimensional Volterra square root diffusion process with jumps }\label{sec:example}
	In this section we discuss a one--dimensional example ($m=d=1$) where not only are we able to infer the assumptions made in the previous arguments, such as the existence of solutions to the stochastic Volterra equation \eqref{f} and the Riccati--Volterra equation \eqref{Vp} (i.e., Hypothesis \ref{hyp1}), but also we can prove the martingale property of the process $\exp\left\{V^T\right\}.$ 
	In order to develop the theory  we need to require more properties for the kernel $K$. In particular, we consider a hypothesis which is standard in the theory of stochastic Volterra equations, that is (see \cite[Condition $\left(2.10\right)$]{edu}, and also  \cite[Condition $\left(3.4\right)$]{sergio} and \cite[Assumption B.$2$]{ee})
	\begin{Condition}\label{c1}
		The kernel $K$ is nonnegative, nonincreasing, not identically zero and continuously differentiable on $\left(0,\infty\right)$, and its resolvent of the first kind $L$ is nonnegative and nonincreasing, i.e., $s\mapsto L\left[s,s+t\right]$ is nonincreasing for every $t\ge0$. 
	\end{Condition}
	We focus on the following stochastic Volterra equation of convolution type
	\begin{equation}\label{f1}
		X=g_0+\left(K\ast \dd Z\right),\quad \mathbb{P}\otimes \dd t-\text{a.e.},
	\end{equation}
	where $Z$ is a real--valued semimartingale with differential characteristics (with respect to $h\left(\xi\right)=\xi,\,\xi\in\mathbb{R}$) given by $\left(b\left(X_t\right),a\left(X_t\right),\eta\left(X_t,\dd\xi\right)\right),\,t\ge0$, with 
	\[
	b\left(x\right)=bx,\qquad a\left(x\right)=c\,x,\qquad \eta\left(x,\dd\xi\right)=x\nu\left(\dd\xi\right),\quad x\ge0.
	\]
	Here $b\in\mathbb{R}, \,c\ge0$ and $\nu$ is a nonnegative measure on $\mathbb{R}_+$ such that $\int_{\mathbb{R}_+}\left|\xi\right|^2\nu\left(\dd\xi\right)<\infty.$
	The function $g_0\colon\mathbb{R}_+\to \mathbb{R}$ is an admissible input curve in either one of the following two forms
	\begin{enumerate}[label=\roman*.]
		\item $g_0$ is continuous and non--decreasing, with $g\left(0\right)=0$;
		\item $g_0\left(t\right)=x_0+\int_{0}^{t}K\left(t-s\right)\theta\left(s\right)\dd s,\,t\ge0$, where $x_0\ge 0$ and  $\theta\colon\mathbb{R}_+\to\mathbb{R}_+$ is locally bounded.
	\end{enumerate}
	Notice that \eqref{f1} describes a $1-$dimensional Volterra square root diffusion. 
	In this framework, we can invoke \cite[Theorem $2.13$]{edu} to claim the existence of a weak, predictable solution $X=\left(X_t\right)_{t\ge 0}$ of \eqref{f1} with trajectories in $L^1_\text{loc}\left(\mathbb{R}_+\right)$ such that $X\ge0,\,\mathbb{P}\otimes \dd t-$a.e. Actually, if $g_0\in L^2_\text{loc}\left(\mathbb{R}_+\right)$, the paths of this solution $X$ are in $L^2_\text{loc}\left(\mathbb{R}_+\right),\,\mathbb{P}-$a.s., as the next result shows. 
	\begin{lemma}\label{L2}
		Suppose that $g_0\in L^2_\emph{loc}\left(\mathbb{R}_+\right)$ and let $X$ be a solution of \eqref{f1} with trajectories in $L^1_\emph{loc}\left(\mathbb{R}_+\right)$ such that $X\ge 0,\,\mathbb{P}\otimes \dd t-$a.e. Then, for every $T>0$, $\mathbb{E}\left[\left(\int_{0}^{T}\left|X_t\right|^2\dd t\right)^{1/2}\right]<\infty$. 
	\end{lemma}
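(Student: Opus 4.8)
The plan is to exploit the a priori bound \eqref{1.4} and bootstrap it from $L^1$ to $L^2$ in time: decompose $X$ into its deterministic input curve, a deterministic convolution of $X$ with $K$, and a stochastic convolution against the martingale part of $Z$, and estimate the $L^2\left(\left[0,T\right]\right)$-norm of each piece in $L^1\left(\Omega\right)$. The point is that all three estimates reduce, via Young's inequality and the Itô isometry, to the $L^1$ control already provided by \eqref{1.4}, so no circular self-reference appears.

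Concretely, since $m=d=1$ with $b\left(x\right)=bx$, Equations \eqref{2}--\eqref{Z} give $X=g_0+b\left(K\ast X\right)+N$, $\mathbb{P}\otimes\dd t$-a.e., where $N:=K\ast\dd\widetilde Z$ and $\widetilde Z=M^c+M^d$. Fix $T>0$ and use the jointly measurable versions of $K\ast X$ and $N$ on $\Omega\times\left[0,T\right]$ from Section \ref{sec:prelim}; then by Minkowski's inequality in $L^2\left(\left[0,T\right]\right)$ it suffices to show that $\norm{g_0}_{L^2\left(\left[0,T\right]\right)}$, $\mathbb{E}\left[\norm{K\ast X}_{L^2\left(\left[0,T\right]\right)}\right]$ and $\mathbb{E}\left[\norm{N}_{L^2\left(\left[0,T\right]\right)}\right]$ are all finite. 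The first is finite because $g_0\in L^2_{\text{loc}}\left(\mathbb{R}_+\right)$. For the second, Young's inequality in the form $\norm{f\ast g}_{L^2}\le\norm{f}_{L^2}\norm{g}_{L^1}$ (extend $K$ and $X$ by $0$ outside $\left[0,T\right]$) gives $\norm{K\ast X}_{L^2\left(\left[0,T\right]\right)}\le\norm{K}_{L^2\left(\left[0,T\right]\right)}\norm{X}_{L^1\left(\left[0,T\right]\right)}$, so taking expectations and invoking \eqref{1.4} yields the claim; note that the right-hand side involves only the $L^1$-norm of $X$, so this is not circular.

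The crucial term is $N$. Here I would recall that, once \eqref{1.4} is known, the truncation argument of Section \ref{sec:prelim} applied with $F=K\in L^2_{\text{loc}}$ shows that for a.e. $t\in\left[0,T\right]$ the random variable $N_t$ is a genuine, centered, square-integrable stochastic integral; since $M^c$ is continuous and $M^d$ purely discontinuous they are orthogonal, so combining $\dd\left\langle M^c,M^c\right\rangle_s=cX_s\,\dd s$ with \eqref{pred_q} (here $\nu_0=0$, $\nu_1=\nu$) gives $\dd\left\langle\widetilde Z,\widetilde Z\right\rangle_s=\kappa X_s\,\dd s$ with $\kappa:=c+\int_{\mathbb{R}_+}\left|\xi\right|^2\nu\left(\dd\xi\right)<\infty$. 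The Itô isometry and $X\ge0$, $\mathbb{P}\otimes\dd t$-a.e., then give
\[
\mathbb{E}\left[\left|N_t\right|^2\right]=\kappa\,\mathbb{E}\left[\int_0^t K\left(t-s\right)^2 X_s\,\dd s\right]=\kappa\left(\left|K\right|^2\ast\mathbb{E}\left[X_\cdot\right]\right)\left(t\right)
\]
for a.e. $t\in\left[0,T\right]$, where $\mathbb{E}\left[X_\cdot\right]$ denotes the function $s\mapsto\mathbb{E}\left[X_s\right]$, which lies in $L^1\left(\left[0,T\right]\right)$ by \eqref{1.4}. Integrating in $t$, Tonelli's theorem together with Young's inequality once more (now with $\left|K\right|^2\in L^1\left(\left[0,T\right]\right)$ since $K\in L^2_{\text{loc}}$) yield
\[
\mathbb{E}\left[\int_0^T\left|N_t\right|^2\,\dd t\right]\le\kappa\,\norm{K}_{L^2\left(\left[0,T\right]\right)}^2\,\mathbb{E}\left[\norm{X}_{L^1\left(\left[0,T\right]\right)}\right]<\infty,
\]
and Jensen's inequality (concavity of the square root) gives $\mathbb{E}\left[\norm{N}_{L^2\left(\left[0,T\right]\right)}\right]<\infty$. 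Combining the three bounds through Minkowski's inequality completes the argument.

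The step I expect to demand the most attention is the one for $N$: one must check carefully that $N_t$ is genuinely a centered, square-integrable random variable for a.e. $t$ and that the Itô-isometry/quadratic-variation bookkeeping for $\widetilde Z=M^c+M^d$ — in particular the orthogonality of $M^c$ and $M^d$ and the identification $\dd\left\langle\widetilde Z,\widetilde Z\right\rangle_s=\kappa X_s\,\dd s$ — is carried out rigorously; but all the required ingredients are already assembled in Section \ref{sec:prelim} together with \eqref{1.4}.
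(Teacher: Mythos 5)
Your proof is correct and follows essentially the same strategy as the paper: decompose $X$ into $g_0$, the drift convolution $b\left(K\ast X\right)$, and the martingale convolution, and push each piece from the known $L^1\left(\left[0,T\right]\right)$ control of \eqref{1.4} to $L^2\left(\left[0,T\right]\right)$ in $L^1\left(\Omega\right)$. The differences are only in the technical bookkeeping. Where you apply Minkowski's inequality in $L^2\left(\left[0,T\right]\right)$ followed by Jensen's inequality (concavity of $\sqrt{\cdot}$) to get $\mathbb{E}\left[\norm{N}_{L^2}\right]\le\left(\mathbb{E}\left[\norm{N}_{L^2}^2\right]\right)^{1/2}$, the paper instead bounds $\left|X_t\right|^2$ directly by a sum of four squares, integrates, uses subadditivity of $\sqrt{\cdot}$ and the elementary inequality $\sqrt{x}\le 1+x$; the two routes are interchangeable. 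Where you treat $\widetilde Z=M^c+M^d$ together via orthogonality and the It\^o isometry (an equality, once the square-integrable martingale property from Section \ref{sec:prelim} is in hand), the paper handles the continuous and purely discontinuous parts separately, invoking Burkholder--Davis--Gundy for the $M^c$ convolution and the maximal inequality of Marinelli--R\"ockner \cite[Theorem $3.2$]{RM} for the $M^d$ convolution; your version is marginally more elementary since it avoids BDG and the external citation, but in dimension one with $p=2$ these reduce to the isometry up to constants anyway. Both arguments lean on the same two essential ingredients --- Young's inequality $\norm{f\ast g}_{L^2}\le\norm{f}_{L^2}\norm{g}_{L^1}$ and the a priori bound \eqref{1.4} --- so there is no circularity in either.
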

	\begin{proof}
		The convolution equation \eqref{f1} let us write, $\mathbb{P}-$a.s.,
		\begin{equation*}
			\left|X_t\right|^2\le 4\left(\left|g_0\left(t\right)\right|^2+\left|b\right|^2\left|\left(K\ast X\right)\left(t\right)\right|^2+\left|\left(K\ast \dd M^c\right)_t\right|^2+\left|\left(K\ast \dd M^d\right)_t\right|^2\right),\quad \text{for a.e. } t\ge0.
		\end{equation*}
		Integrating over the interval $\left(0,T\right), T>0$, we have
		\begin{multline*}
			\left(\int_{0}^{T}\left|X_t\right|^2\dd t\right)^{\frac{1}{2}}\le 2\bigg(2+\norm{g_0}_{L^2\left(\left[0,T\right]\right)}+\left|b\right|\norm{K\ast X}_{L^2\left(\left[0,T\right]\right)}\\+\int_{0}^{T}\left|\left(K\ast \dd M^c\right)_t\right|^2\dd t+\int_{0}^{T}\left|\left(K\ast \dd M^d\right)_t\right|^2\dd t\bigg),\quad \mathbb{P}-\text{a.s.,}
		\end{multline*}
		where we also use that $\sqrt{x}\le 1+x,\,x\in\mathbb{R}_+.$ By \cite[Theorem $2.2$\,(\lowerRomannumeral{1}), Chapter $2$]{g} we have $\norm {K\ast X}_{L^2\left(\left[0,T\right]\right)}\le \norm{K}_{L^2\left(\left[0,T\right]\right)} \norm{ X}_{L^1\left(\left[0,T\right]\right)}$, hence taking expectation in the previous inequality we obtain, using Tonelli's theorem,
		\begin{multline}\label{close}
			\mathbb{E}\left[\left(\int_{0}^{T}\left|X_t\right|^2\dd t\right)^{\frac{1}{2}}\right]\le 
			2\bigg(2+\norm{g_0}_{L^2\left(\left[0,T\right]\right)}+\left|b\right|\norm{K}_{L^2\left(\left[0,T\right]\right)} \mathbb{E}\left[\norm{ X}_{L^1\left(\left[0,T\right]\right)}\right]\\
			+
			\int_{0}^{T}\mathbb{E}\left[\left|\left(K\ast \dd M^c\right)_t\right|^2\right]\dd t
			+
			\int_{0}^{T}\mathbb{E}\left[\left|\left(K\ast \dd M^d\right)_t\right|^2\right]\dd t\bigg).
		\end{multline}
		Recall that $\left(K\ast \dd M^c\right)_t=\int_{0}^{t}K\left(t-s\right)\dd M^c_s,\, \mathbb{P}-$a.s. for a.e. $t\ge 0$; therefore we use the Burkholder--Davis--Gundy inequality and the Young's type inequality in \cite[Lemma A$.1$]{ACLP}  to write (always bearing in mind Tonelli's theorem)
		\begin{multline*}
			\int_{0}^{T}\mathbb{E}\left[\left|\left(K\ast \dd M^c\right)_t\right|^2\right]\dd t\le c\cdot c_1\mathbb{E}\left[\int_{0}^T \left(\int_{0}^{t}\left|K\left(t-s\right)\right|^2X_s\,\dd s\right)\dd t\right]
			\\\le c\cdot  c_1\norm{K}^2_{L^2\left(\left[0,T\right]\right)}\mathbb{E}\left[\norm{X}_{L^1\left(\left[0,T\right]\right)}\right]
			,\quad \text{for some }c_1>0.
		\end{multline*}
		Analogously, we invoke \cite[Theorem $3.2$]{RM} to assert
		\[
		\int_{0}^{T}\mathbb{E}\left[\left|\left(K\ast \dd M^d\right)_t\right|^2\right]\dd t\le 2 \left(\int_{\mathbb{R}_+}\left|\xi\right|^2\nu\left(\dd \xi\right)\right)c_2\norm{K}^2_{L^2\left(\left[0,T\right]\right)}\mathbb{E}\left[\norm{X}_{L^1\left(\left[0,T\right]\right)}\right],\quad \text{for some }c_2>0.
		\]
		Now substituting the previous two bounds in \eqref{close}  we see that the right side is finite by \eqref{1.4}. This concludes the proof.
	\end{proof}
	 Continuing the example of this section, let $f\in C\left(\mathbb{R}_+;\mathbb{C}_-\right)$. In addition to Eq.~\eqref{Vp}, we consider the deterministic Riccati--Volterra equation 
	\begin{equation}\label{Vp_real}
		\widebar{\psi}\left(t\right)=\int_{0}^{t}K\left(t-s\right)\widebar{F}\left(s,\widebar{\psi}\left(s\right)\right)\dd s,\quad t\ge0,
	\end{equation}
where $\mathfrak{R}f\colon\mathbb{R}_+\to\mathbb{R}_-$ denotes the real part of $f$ and  $\widebar{F}\colon\mathbb{R}_+\times\mathbb{C}_-\to\mathbb{R}$ is defined as follows
	\begin{equation}\label{Fbar}
	\widebar{F}\left(t,u\right)= \mathfrak{R}{f}\left(t\right)+bu+\frac{c}{2}u^2+\int_{\mathbb{R}_+}\left(e^{u\xi}-1-u\xi\right)\nu\left(\dd\xi\right),\quad \left(t,u\right)\in\mathbb{R}_+\times\mathbb{C}_-.
\end{equation}
The next theorem shows the existence of global solutions to \eqref{Vp} and \eqref{Vp_real} (in particular, Hypothesis \ref{hyp1} is verified), as well as a comparison result between them which is crucial for the subsequent argument on the martingale property.

\begin{theorem}\label{tadd}
	Let $f\in C\left(\mathbb{R}_+;\mathbb{C}_-\right)$ and assume Condition \ref{c1}.
	\begin{enumerate}[label=\emph{(\roman*)}, ref=(\roman*)]
		\item\label{i1} There exist a continuous global solution $\psi\in C\left(\mathbb{R}_+;\mathbb{C}_-\right)$ of \eqref{Vp} and  a real--valued, continuous global solution $\widebar{\psi}\in C\left(\mathbb{R}_+;\mathbb{R}_-\right)$ of \eqref{Vp_real}.
		\item\label{i2} Given $\psi\in C\left(\mathbb{R}_+;\mathbb{C}_-\right)$ and $\widebar{\psi}\in C\left(\mathbb{R}_+;\mathbb{R}_-\right)$  satisfying  \eqref{Vp} and \eqref{Vp_real}, respectively, then the following inequality holds
			\begin{equation}\label{le}
			\mathfrak{R}\psi\left(t\right)\le\widebar{\psi}\left(t\right),\quad t\ge0.
		\end{equation}
	\end{enumerate}
\end{theorem}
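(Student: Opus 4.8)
The plan is to route everything through a single pointwise inequality comparing $F$ and $\widebar{F}$, namely
\[
\mathfrak{R}F(t,u)\le\widebar{F}\bigl(t,\mathfrak{R}u\bigr),\qquad (t,u)\in\mathbb{R}_+\times\mathbb{C}_-,
\]
which is checked term by term in \eqref{F}--\eqref{Fbar}: $\mathfrak{R}(bu)=b\,\mathfrak{R}u$; $\mathfrak{R}(u^2)=(\mathfrak{R}u)^2-(\mathfrak{Im}u)^2\le(\mathfrak{R}u)^2$; and $\mathfrak{R}(e^{u\xi}-1-u\xi)=e^{\mathfrak{R}u\,\xi}\cos(\mathfrak{Im}u\,\xi)-1-\mathfrak{R}u\,\xi\le e^{\mathfrak{R}u\,\xi}-1-\mathfrak{R}u\,\xi$, so that summing and setting $v=\mathfrak{R}u$ gives the bound. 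Everything in the theorem is then a consequence of this inequality together with the structural properties in Condition \ref{c1}.

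For \ref{i1} I would argue as follows. Since $\widebar{F}(t,\cdot)$ is locally Lipschitz uniformly in $t$ on compact sets — the only non-polynomial term has $v$-derivative $\int_{\mathbb{R}_+}\xi(e^{v\xi}-1)\,\nu(\dd\xi)$, bounded for $v\le 0$ by $\int_{\mathbb{R}_+}\xi\,\nu(\dd\xi)<\infty$ — a contraction argument on a short interval yields a local continuous solution of \eqref{Vp_real}, and to extend it to all of $\mathbb{R}_+$ it suffices to have an a priori bound on compacts. Invariance of $\mathbb{R}_-$ is where Condition \ref{c1} enters: nonnegativity of $K$, nonnegativity and monotonicity of the first-kind resolvent $L$, and the sign $\widebar{F}(t,0)=\mathfrak{R}f(t)\le 0$ keep $\widebar{\psi}$ inside $\mathbb{R}_-$ (this is the content of the relevant lemma in \ref{ap_B}); and on $\mathbb{R}_-$ one has $\widebar{F}(t,v)\ge\mathfrak{R}f(t)+bv$ because $v^2\ge 0$ and $e^{v\xi}-1-v\xi\ge 0$, whence $|\widebar{\psi}|\le K\ast|\mathfrak{R}f|+|b|\,(K\ast|\widebar{\psi}|)$ and the generalized Gronwall inequality close the estimate. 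The $\mathbb{C}_-$-valued solution $\psi$ of \eqref{Vp} is obtained by the same local fixed point; invariance of $\mathbb{C}_-$ follows from the pointwise inequality together with $\mathfrak{R}F(t,u)\le 0$ whenever $\mathfrak{R}u=0$, and the remaining a priori control of $|\psi|$ on compacts (bounding $\mathfrak{R}\psi$ and $\mathfrak{Im}\psi$ separately, using $|e^{u\xi}-1-u\xi|\le\tfrac12|u|^2\xi^2$ for $u\in\mathbb{C}_-$ and $\int_{\mathbb{R}_+}\xi^2\,\nu(\dd\xi)<\infty$) is carried out — together with the comparison of \ref{i2} below, which supplies part of this control — following \ref{ap_B}. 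In particular Hypothesis \ref{hyp1} holds.

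For \ref{i2}, since $K$ is real-valued, $\mathfrak{R}\psi=K\ast\mathfrak{R}F(\cdot,\psi)$. Put $\rho(t):=\widebar{F}\bigl(t,\mathfrak{R}\psi(t)\bigr)-\mathfrak{R}F\bigl(t,\psi(t)\bigr)$, which by the pointwise inequality is nonnegative and continuous. Then $\mathfrak{R}\psi=K\ast\bigl(\widebar{F}(\cdot,\mathfrak{R}\psi)-\rho\bigr)$: that is, $\mathfrak{R}\psi$ solves the \emph{same} Riccati--Volterra equation as $\widebar{\psi}$ but with the forcing $\mathfrak{R}f$ replaced by the pointwise smaller function $\mathfrak{R}f-\rho$. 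Hence \eqref{le} reduces to the monotone dependence on the forcing of the scalar Riccati--Volterra equation $y=K\ast\bigl(h+by+G(y)\bigr)$, with $G(v)=\tfrac c2 v^2+\int_{\mathbb{R}_+}(e^{v\xi}-1-v\xi)\,\nu(\dd\xi)$: if $h_1\le h_2$ with corresponding $\mathbb{R}_-$-valued solutions $y_1,y_2$, then $y_1\le y_2$. Applying this to $h_1=\mathfrak{R}f-\rho\le\mathfrak{R}f=h_2$, with $y_1=\mathfrak{R}\psi$ and $y_2=\widebar{\psi}$, yields $\mathfrak{R}\psi\le\widebar{\psi}$.

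Proving this monotone dependence is where I expect the main difficulty. Writing $w=y_1-y_2$ one gets $w=K\ast\bigl((h_1-h_2)+(b+G'(\theta))\,w\bigr)$ with $\theta(t)$ lying between $y_1(t)$ and $y_2(t)$, so that $|G'(\theta)|$ is bounded on the compact range of the two solutions; the obstruction is that the linearized coefficient $b+G'(\theta)$ need not have a constant sign, so one cannot simply invoke positivity of a second-kind resolvent, and the convolution memory rules out a naive first-crossing argument. This is exactly the point at which the full strength of Condition \ref{c1} — nonnegativity and monotonicity of \emph{both} $K$ and $L$ — is indispensable: one route is to approximate $K$ in $L^1_{\mathrm{loc}}$ by kernels preserving this structure for which \eqref{Vp_real} becomes a finite system of cooperative ODEs, apply the classical comparison there, and pass to the limit using the a priori bounds of \ref{i1}; the direct argument is the dedicated Volterra comparison lemma of \ref{ap_B}.
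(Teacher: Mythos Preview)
Your treatment of \ref{i2} is essentially the paper's: you isolate the same pointwise inequality $\mathfrak{R}F(t,u)\le\widebar F(t,\mathfrak{R}u)$, set up the difference as a linear Volterra equation with a nonnegative inhomogeneity $K\ast\rho$ (the paper's $\gamma$), linearize the nonlinearity via the mean value theorem, and invoke a Volterra comparison lemma under Condition~\ref{c1} (the paper cites \cite[Theorem~C.1]{ee}). One technical point you skate over and the paper handles with care: the mean-value coefficient --- your $G'(\theta(t))$, the paper's $z+\widebar w$ --- must be shown \emph{continuous} in $t$, not merely bounded, because the comparison theorem needs a continuous linear coefficient. The paper does this by constructing $c(\xi,t)$ explicitly and treating separately the points where $\delta\neq 0$ (where $c(\xi,\cdot)$ is recovered from the inverse of $\xi e^{\xi\cdot}$) and the zeros of $\delta$ (where a squeeze argument applies).

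For \ref{i1} your sketch has a genuine gap. The estimate $|e^{u\xi}-1-u\xi|\le\tfrac12|u|^2\xi^2$ only produces a \emph{quadratic} integral inequality for $|\psi|$, which does not close under a Gronwall argument and is precisely the kind of bound that permits finite-time blow-up. Moreover, invoking \ref{i2} to ``supply part of this control'' goes the wrong way: $\mathfrak{R}\psi\le\widebar\psi\le 0$ gives $|\mathfrak{R}\psi|\ge|\widebar\psi|$, which is a lower bound, not the upper bound you need. The paper's argument avoids both issues by exploiting sign structure you do not use: it first rewrites the imaginary-part equation so that every nonlinear term appears as a \emph{nonpositive} continuous coefficient times $\mathfrak{Im}\psi$ (splitting the jump integral into $\int\xi(e^{\mathfrak{R}\psi\,\xi}-1)\nu(\dd\xi)\cdot\mathfrak{Im}\psi$ plus a term of the form $h(\mathfrak{R}\psi,\mathfrak{Im}\psi)\cdot\mathfrak{Im}\psi$ with $h\le 0$), so that $|\mathfrak{Im}\psi|$ is dominated by the solution of the genuinely \emph{linear} equation $u=K\ast(|\mathfrak{Im}f|+bu)$; only then is $\mathfrak{R}\psi$ bounded below by the solution of another linear equation, using the already-established bound on $|\mathfrak{Im}\psi|$ to control the cross terms. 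This two-step linear comparison is the missing idea.
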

\begin{proof}
	The proof of \ref{i1} is in \ref{B1}, and the one of \ref{i2} is in \ref{B2}.
\end{proof}
	In what follows, we take two continuous functions $\psi, \widebar{\psi}$ as in Theorem \ref{tadd} \ref{i1} and
	fix $T>0$. We aim to prove the martingale property of the process $\exp\left\{V^T\right\}$, where $V^T$ is given in \eqref{y1}--\eqref{y2}. For this purpose, we define the process $\widebar{V}^T$ as in \eqref{y1}--\eqref{y2}, substituting $\mathfrak{R}{f}$ [resp., $\widebar{\psi}$] for $f$ [resp., $\psi$]. Theorem \ref{t6} shows that $\widebar{V}^T_t=\widetilde{\widebar{V}}^T_t,\,\mathbb{P}-$a.s., for every $t\in\left[0,T\right]$, where of course we define $\widetilde{\widebar{V}}^T$ as in \eqref{y_tilde} with the same substitution as before. 
	It is known that  $\exp\left\{\widebar{V}^T\right\}$ is a true, real--valued martingale. This is due to \cite[Lemma $6.1$]{edu}, which in turn is an interesting application of the Novikov--type condition in \cite[Theorem \upperRomannumeral{4}.$3$]{LM}. The idea of the present section consists in using the expression \eqref{past1} in order to prove the bound $\left|\exp\left\{V^T\right\}\right|\le C\exp\left\{\widebar{V}^T\right\}$ up to indistinguishability for some $C>0$, so that we can conclude that $\exp\left\{V^T\right\}$ is a martingale, too. 
	
	Direct computations based on the Riccati--Volterra equation \eqref{Vp} yield, for every $h>0$,
	\[
	\Delta_h\psi\left(r\right)=\left(\Delta_h\left(F\left(\cdot,\psi\left(\cdot\right)\right)\right)\ast K\right)\left(r\right)+\left(F\left(\cdot,\psi\left(\cdot\right)\right)\ast \Delta_rK\right)\left(h\right),\quad r\ge0.
	\] 
	Focusing on the second addend on the right side, if we convolve with $L$ we end up with
	\begin{multline*}
		\left(\left(F\left(\diamond,\psi\left(\diamond\right)\right)\ast \Delta_\cdot K\right)\left(h\right)\ast L\right)\left(r\right)=\int_{0}^{r}\left[\int_{0}^{h}F\left(s,\psi\left(s\right)\right)\left(\Delta_{r-u}K\right)\left(h-s\right)\dd s\right] L\left(\dd u\right)\\
		=\int_{0}^{h}F\left(s,\psi\left(s\right)\right)\left(\left(\Delta_{h-s}K\right)\ast L\right)\left(r\right)\dd s,\quad r\ge0,
	\end{multline*}
	where the application of  Fubini's theorem is justified because $K$ is nonnegative and nonincreasing and $L$ is a nonnegative measure. Whence, since $\Delta_h\left(\psi\ast L\right)\left(r\right)=\left(F\left(\cdot,\psi\left(\cdot\right)\right)\ast 1\right)\left({r+h}\right)$, recalling the expression in \eqref{PI_tilde} we can write
	\begin{multline*}
		\widetilde{\Pi}_h\left(r\right)=\left(\Delta_h\psi\ast L\right)\left(r\right)-\Delta_h\left(\psi\ast L\right)\left(r\right)+\int_{0}^{h}F\left(s,\psi\left(s\right)\right)\dd s\\
		=
		-\int_{\left(0,h\right]}\psi\left(h-s\right)L\left(r+\dd s\right)+\int_{0}^{h}F\left(s,\psi\left(s\right)\right)\dd s
		,\quad r\ge0,
	\end{multline*}
	and in particular
	\[
	\mathfrak{R}\left(\widetilde{\Pi}_h\left(r\right)\right)=-\int_{\left(0,h\right]}\mathfrak{R}\left(\psi\left(h-s\right)\right)L\left(r+\dd s\right)+\int_{0}^{h}\mathfrak{R}\left(F\left(s,\psi\left(s\right)\right)\right)\dd s
	,\quad r\ge0.
	\]
	Repeating the same argument for $\widebar{\psi}$ we also obtain
	\[
	\widetilde{\widebar{\Pi}}_h\left(r\right)
	=
	-\int_{\left(0,h\right]}\widebar{\psi}\left(h-s\right)L\left(r+\dd s\right)+\int_{0}^{h}\widebar{F}\left(s,\widebar{\psi}\left(s\right)\right)\dd s
	,\quad r\ge0.
	\] 
	Taking the difference between the two previous equations we infer, for every $r\ge 0$,
	\begin{multline}\label{sale}
		\widetilde{\widebar{\Pi}}_h\left(r\right)-\mathfrak{R}\left(\widetilde{\Pi}_h\left(r\right)\right)=
		-\int_{\left(0,h\right]}\left[\widebar{\psi}-\mathfrak{R}\psi\right]\left(h-s\right)L\left(r+\dd s\right)+\int_{0}^{h}\left[\widebar{F}\left(\cdot,\widebar{\psi}\left(\cdot\right)\right)-\mathfrak{R}\left(F\left(\cdot,\psi\left(\cdot\right)\right)\right)\right]\left(s\right)\dd s\\
		=\left(\widebar{\psi}-\mathfrak{R}\psi\right)\left(h\right)L\left(\left\{r\right\}\right)-\int_{\left[0,h\right]}\left[\widebar{\psi}-\mathfrak{R}\psi\right]\left(h-s\right)L\left(r+\dd s\right)+\int_{0}^{h}\left[\widebar{F}\left(\cdot,\widebar{\psi}\left(\cdot\right)\right)-\mathfrak{R}\left(F\left(\cdot,\psi\left(\cdot\right)\right)\right)\right]\left(s\right)\dd s.
	\end{multline}
	Hence, we see that this function is increasing on the interval $\left(0,\infty\right)$  by \eqref{le} in Theorem \ref{tadd} \ref{i2} and Condition~\ref{c1} as soon as $L$ has no point masses in $\left(0,\infty\right)$. We are now in position to prove the next, important result.
	\begin{theorem}\label{bound_t}
		Assume that the kernel $K\in L^2_{\emph{loc}}\left(\mathbb{R}_+;\mathbb{R}\right)$ satisfies the requirements of Corollary \ref{c9} together with Condition \ref{c1}. Then there exists a constant $C>0$ such that
		\begin{equation}\label{bound}
			\left|\exp\left\{V^T_t\right\}\right|\le C \exp\left\{\widebar{V}^T_t\right\},\quad t\in\left[0,T\right],\,\mathbb{P}-\text{a.s.}
		\end{equation}
		In particular, $\left(\exp\left\{V^T_t\right\}\right)_{t\in\left[0,T\right]}$ is a complex--valued martingale.
	\end{theorem}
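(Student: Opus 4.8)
Since $\left|\exp z\right|=\exp\left(\mathfrak{R}z\right)$ for every $z\in\mathbb{C}$, the estimate \eqref{bound} is equivalent to a deterministic lower bound on the real--valued process $D_t:=\widebar{V}^T_t-\mathfrak{R}\left(V^T_t\right)$, $t\in\left[0,T\right]$. The plan is to produce such a bound by comparing $V^T$ with $\widebar V^T$ through their \emph{affine with respect to the past} representations. Under the present hypotheses Corollary \ref{c9} applies, so I would write \eqref{past1} for $V^T$ and, with $f,\psi,\widetilde\Pi$ replaced by $\mathfrak{R}f,\widebar\psi,\widetilde{\widebar{\Pi}}$, for $\widebar V^T$ (legitimate because $\widebar V^T_t=\widetilde{\widebar{V}}^T_t$ $\mathbb{P}$--a.s.\ by Theorem \ref{t6}), and then subtract the two identities. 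As $X,g_0,Z_t$ are real--valued and $X\ge0$ $\mathbb{P}\otimes\dd t$--a.e., the terms $\int_0^t f\left(T-s\right)X_s\,\dd s$ contribute the same real part on both sides and cancel, while the remaining terms depending on the past of $X$ recombine into a single Stieltjes convolution. One is left, for a.e.\ $t\in\left(0,T\right)$ and $\mathbb{P}$--a.s., with
\[
D_t=\bigl[\widebar\phi-\mathfrak{R}\phi\bigr]\left(T-t\right)+\int_0^{T-t}\bigl[\widebar F\left(s,\widebar\psi\left(s\right)\right)-\mathfrak{R}F\left(s,\psi\left(s\right)\right)\bigr]g_0\left(T-s\right)\dd s+\bigl(\dd G_{T-t}\ast\left(X-g_0\right)\bigr)\left(t\right),
\]
where $G_h:=\widetilde{\widebar{\Pi}}_h-\mathfrak{R}\widetilde\Pi_h$ is exactly the function appearing in \eqref{sale}, so that $\dd G_h$ carries the point mass $G_h\left(0\right)=\left(\widebar\psi-\mathfrak{R}\psi\right)\left(h\right)L\left(\left\{0\right\}\right)$ at the origin together with the a.e.\ density $G_h'$ on $\left(0,\infty\right)$.

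I would then estimate the three summands separately. The first is a continuous deterministic function of $t$ on the compact interval $\left[0,T\right]$ (recall $\phi,\widebar\phi\in C\left(\mathbb{R}_+\right)$), hence bounded. The second is also bounded: its integrand is continuous in $s$ (by continuity of $\psi,\widebar\psi,F,\widebar F$), $g_0$ is continuous --- hence bounded on $\left[0,T\right]$ --- in both admissible forms of the input curve, and the range of integration has length $\le T$. For the third summand I would split $\bigl(\dd G_{T-t}\ast\left(X-g_0\right)\bigr)\left(t\right)=\bigl(\dd G_{T-t}\ast X\bigr)\left(t\right)-\bigl(\dd G_{T-t}\ast g_0\bigr)\left(t\right)$. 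By Theorem \ref{tadd} \ref{i2} we have $\mathfrak{R}\psi\le\widebar\psi$, whence $G_h\left(0\right)\ge0$, and by the discussion preceding the theorem --- which invokes \eqref{le}, Condition \ref{c1}, and the absence of point masses of $L$ on $\left(0,\infty\right)$ --- the function $G_h$ is nondecreasing on $\left(0,\infty\right)$; therefore $\dd G_{T-t}$ is a nonnegative measure on $\left[0,T\right]$ and $\bigl(\dd G_{T-t}\ast X\bigr)\left(t\right)\ge0$ because $X\ge0$. Finally $\bigl(\dd G_{T-t}\ast g_0\bigr)\left(t\right)$ is a deterministic function of $t$ which is bounded on $\left[0,T\right]$: since $G_h\left(0\right)\ge0$ and $G_h$ is nondecreasing on $\left(0,\infty\right)$ one has $\norm{G_h}_{\text{TV}\left(\left[0,T\right]\right)}=G_h\left(T\right)$, and the explicit expression \eqref{sale} together with Condition \ref{c1} (the nonincreasingness of $L$ in the sense of that condition and its local finiteness) gives $\sup_{h\in\left[0,T\right]}G_h\left(T\right)<\infty$, while $g_0$ is bounded on $\left[0,T\right]$.

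Collecting the three bounds yields $D_t\ge-\log C$ for a.e.\ $t\in\left[0,T\right]$, $\mathbb{P}$--a.s., for a suitable $C>0$, i.e.\ $\left|\exp\left\{V^T_t\right\}\right|\le C\exp\left\{\widebar V^T_t\right\}$ for a.e.\ $t$, $\mathbb{P}$--a.s.; since both sides are right--continuous in $t$, \eqref{bound} follows for every $t\in\left[0,T\right]$, $\mathbb{P}$--a.s. For the final assertion: by \cite[Lemma $6.1$]{edu} the real process $\exp\left\{\widebar V^T\right\}$ is a true martingale on $\left[0,T\right]$, hence $\left(\exp\left\{\widebar V^T_t\right\}\right)_{t\in\left[0,T\right]}$ is uniformly integrable; by \eqref{bound} so is $\left(\exp\left\{V^T_t\right\}\right)_{t\in\left[0,T\right]}$, and therefore so are the families of its real and imaginary parts. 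As $\exp\left\{V^T\right\}$ is a $\mathbb{C}$--valued local martingale by Theorem \ref{t6}, its real and imaginary parts are uniformly integrable real local martingales on $\left[0,T\right]$, hence true martingales; consequently $\exp\left\{V^T\right\}$ is a martingale.

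The main obstacle --- and the reason for routing the argument through Corollary \ref{c9} rather than through the representation \eqref{past} of Theorem \ref{main} --- is that it is precisely the combination $G_h=\widetilde{\widebar{\Pi}}_h-\mathfrak{R}\widetilde\Pi_h$ that inherits monotonicity from the Riccati comparison \eqref{le} and from Condition \ref{c1}, so that $\dd G_h$ is a nonnegative measure against which the nonnegative process $X$ can be integrated with the right sign; neither $\widebar\pi_h-\mathfrak{R}\pi_h$ nor the term $\psi\left(T-t\right)Z_t$ in isolation has a usable sign. A secondary, purely technical difficulty is the bookkeeping of the ``a.e.\ $t$'', ``$\mathbb{P}$--a.s.''\ and ``$\mathbb{P}\otimes\dd t$--a.e.''\ qualifiers, which the right--continuity of the two exponential processes lets us upgrade at the end to a pathwise bound valid for all $t$.
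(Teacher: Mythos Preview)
Your proof is correct and follows essentially the same route as the paper: apply the affine--in--the--past representation of Corollary~\ref{c9} to both $V^T$ and $\widebar V^T$, exploit the monotonicity of $G_h=\widetilde{\widebar\Pi}_h-\mathfrak R\,\widetilde\Pi_h$ (from \eqref{sale} and \eqref{le}) together with $X\ge0$ to handle the random term, and bound the remaining deterministic pieces uniformly in $t$. Two small technical points worth tightening: (i) to upgrade the bound from a.e.\ $t\in(0,T)$ to every $t\in[0,T]$ the paper uses not only right--continuity but also the left--continuity of $V^T$ and $\widebar V^T$ at the endpoint $T$ (which holds because $\psi(0)=\widebar\psi(0)=0$); (ii) the implication ``uniformly integrable local martingale $\Rightarrow$ martingale'' is false in general---what you actually have, and what the paper invokes via \cite[Lemma~1.4]{J}, is domination by the true martingale $C\exp\{\widebar V^T\}$, which yields class~(D) and hence the martingale property.
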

	\begin{proof}
		First of all note that $\left|\exp\left\{V^T\right\}\right|=\exp\left\{\mathfrak{R}\left(V^T\right)\right\}$. For the reader's convenience, we write the expression for $\mathfrak{R}\left(V^T\right)$ provided by \eqref{past1}
		\begin{multline*}
			\mathfrak{R}\left(V^T_t\right)=\int_{0}^{t}\mathfrak{R}{f}\left(T-s\right) X_s\,\dd s+\int_{0}^{T-t}\mathfrak{R}\left(F\left(s,\psi\left(s\right)\right)\right) g_0\left(T-s\right)\dd s+	\mathfrak{R}\psi\left(T-t\right) L\left(\left\{0\right\}\right)\left(X-g_0\right)\left(t\right)\\-\left(\dd\left(\mathfrak{R}\left(\widetilde{\Pi}_{T-t}\right)\right) \ast g_0\right)\left(t\right)+\left(\dd\left(\mathfrak{R}\left(\widetilde{\Pi}_{T-t}\right)\right)\ast X\right)\left(t\right),\quad \text{for a.e. }t\in\left(0,T\right),\, \mathbb{P}-\text{a.s.}
		\end{multline*}
		The idea of the proof is simply to compare, term by term, the addends of this sum with the corresponding ones in the expansion of $\widebar{V}^T$ according to \eqref{past1}. We are going to consider a common set $\Omega_0\subset \Omega$, with $\mathbb{P}\left(\Omega_0\right)=1$, such that both the expressions for $\mathfrak{R}\left(V^T_t\right)$ and $\widebar{V}^T_t$ are valid on $\left(0,T\right)\setminus N^\omega$, being $N^\omega\subset \left(0,T\right)$ a $\dd t-$null set for every $\omega\in\Omega_0$.
		
		Regarding the random terms, recall that $X\ge0,\,\mathbb{P}\otimes \dd t-$a.e. Therefore, without loss of generality, we can assume that for every $\omega\in \Omega_0$ and $t\in\left(0,T\right)\setminus N^\omega$ we have $X_t\left(\omega\right)\ge0$. As a consequence (by \eqref{sale})
		\[
		\left(\dd\left(\widetilde{\widebar{\Pi}}_{T-t}-\mathfrak{R}\left(\widetilde{\Pi}_{T-t}\right)\right) \ast X_\cdot\left(\omega\right)\right)\left(t\right)\ge0
		\,\,\,\Longrightarrow \,\,\, \left(\dd\left(\mathfrak{R}\left(\widetilde{\Pi}_{T-t}\right)\right) \ast X_\cdot\left(\omega\right)\right)\left(t\right)\le\left(\dd{\widetilde{\widebar{\Pi}}_{T-t}}\ast X_\cdot\left(\omega\right)\right)\left(t\right).
		\]
		It is important to stress the fact that such an inequality can be stated because the measure $L$ is absolutely continuous with respect to the  Lebesgue measure on the interval $\left(0,\infty\right)$. Summing up, 
		\[
		\left(\dd\left(\mathfrak{R}\left(\widetilde{\Pi}_{T-t}\right)\right) \ast X\right)\left(t\right)\le\left(\dd{\widetilde{\widebar{\Pi}}_{T-t}} \ast X\right)\left(t\right),\quad t\in\left(0,T\right)\setminus N^\omega, \,\omega\in\Omega_0.
		\]
		Moreover, since $L\left(\left\{0\right\}\right)\ge0$, by \eqref{le} we immediately have
		\[
		\mathfrak{R}\psi\left(T-t\right)L\left(\left\{0\right\}\right)X_t\le 	\widebar{\psi}\left(T-t\right)L\left(\left\{0\right\}\right)X_t,\quad t\in\left(0,T\right)\setminus N^\omega, \,\omega\in\Omega_0.
		\]
		The other random addend $\int_{0}^{t}\mathfrak{R}{f}\left(T-s\right)X_s\,\dd s$ appears in both the expressions for $\mathfrak{R}\left(V^T_t\right)$ and $\widebar{V}^T_t$, so it does not need to be discussed.
		
		As for the  deterministic terms, we observe that, by {H\"older's inequality},
		\[
		\left|\int_{0}^{T-t}g_0\left(T-s\right)\left(\mathfrak{R}\left(F\left(s,\psi\left(s\right)\right)\right)-\widebar{F}\left(s,\widebar{\psi}\left(s\right)\right)\right)\dd s\right| \le
		\norm{g_0}_{L^2\left(\left[0,T\right]\right)}\norm{\mathfrak{R}\left(F\left(\cdot,\psi\left(\cdot\right)\right)\right)-\widebar{F}\left(\cdot,\widebar{\psi}\left(\cdot\right)\right)}_{L^2\left(\left[0,T\right]\right)},
		\]
		for any $t\in\left(0,T\right).$
		Hence, calling $C_1= \norm{g_0}_{L^2\left(\left[0,T\right]\right)}\norm{\mathfrak{R}\left(F\left(\cdot,\psi\right)\right)-\widebar{F}\left(\cdot,\psi\right)}_{L^2\left(\left[0,T\right]\right)}$, we have
		\[
		\int_{0}^{T-t}g_0\left(T-s\right)\mathfrak{R}\left(F\left(s,\psi\left(s\right)\right)\right)\dd s
		\le C_1+
		\int_{0}^{T-t}g_0\left(T-s\right)\widebar{F}\left(s,\widebar{\psi}\left(s\right)\right)\dd s,\quad t\in\left(0,T\right).
		\] 
		Furthermore, recalling the continuity of $\psi,\,\widebar{\psi}$ and $g_0$, we call $C_2= \max_{t\in\left[0,T\right]}\left\{\left|\widebar{\psi}-\mathfrak{R}\psi\right|\left(T-t\right)g_0\left(t\right)\right\}$, so that we have
		\[
		-\mathfrak R\psi\left(T-t\right)L\left(\left\{0\right\}\right)g_0\left(t\right)\le L\left(\left\{0\right\}\right)C_2
		-\widebar{\psi}\left(T-t\right)L\left(\left\{0\right\}\right)g_0\left(t\right),\quad t\in\left(0,T\right).
		\]
		Finally, looking at \eqref{sale} we compute
		\begin{multline*}
			\left(\dd\left(\widetilde{\widebar{\Pi}}_{T-t}-\mathfrak{R}\left(\widetilde{\Pi}_{T-t}\right)\right)\ast 1\right)\left(t\right)\le 
			-\int_{\left[0,T-t\right]}\left[\widebar{\psi}-\mathfrak{R}\psi\right]\left(T-t-s\right)\left[L\left(t+\dd s\right)-L\left(\dd s\right)\right]\\
			\le2
			\max_{t\in\left[0,T\right]}\left|\widebar{\psi}\left(t\right)-\mathfrak{R}\psi\left(t\right)\right|L\left(\left[0,T\right]\right)= C_3,\quad t\in\left(0,T\right).
		\end{multline*}
		Hence exploiting the continuity of the input curve we conclude
		\[
		\left|	\left(\dd\left(\widetilde{\widebar{\Pi}}_{T-t}-\mathfrak{R}\left(\widetilde{\Pi}_{T-t}\right)\right)\ast g_0\right)\left(t\right)\right|\le C_3\max_{t\in\left[0,T\right]}\left|g_0\left(t\right)\right|,
		\]
		which in turn implies
		\[
		-\left(\dd\left(\mathfrak{R}\left(\widetilde{\Pi}_{T-t}\right)\right)\ast g_0\right)\left(t\right)\le C_3\max_{t\in\left[0,T\right]}\left|g_0\left(t\right)\right|-\left(	\dd\widetilde{\widebar{\Pi}}_{T-t}\ast g_0\right)\left(t\right),\quad t\in\left(0,T\right).
		\]
		Combining all these results we deduce that 
		\begin{equation}\label{43}
			\mathfrak{R}\left(V_t^T\left(\omega\right)\right)\le C_1+L\left(\left\{0\right\}\right)C_2+C_3\max_{t\in\left[0,T\right]}\left|g_0\left(t\right)\right|+\widebar{V}_t^T\left(\omega\right),\quad t\in\left(0,T\right)\setminus N^\omega, \,\omega\in\Omega_0.
		\end{equation}
		Since $N^\omega$ is a null set, its complementary  $\left(N^\omega\right)^c=\left(0,T\right)\setminus N^\omega$ is dense in $\left[0,T\right]$. 
		Recalling the regularity for the trajectories of the processes $\mathfrak{R}\left(V^T\right)$ and $\widebar{V}^T$,
		we can assume that for every $\omega\in\Omega_0$ both the functions $\mathfrak{R}\left(V^T_\cdot\left(\omega\right)\right)$ and $\widebar{V}^T_\cdot\left(\omega\right)$ are c\`adl\`ag in $\left[0,T\right]$ and left--continuous in $T$. Accordingly, we pass to the limit  --from the right in $\left[0,T\right)$ and from the left in $T$-- to deduce, from \eqref{43}, that
		\[
		\mathfrak{R}\left(V_t^T\left(\omega\right)\right)\le C_1+L\left(\left\{0\right\}\right)C_2+C_3\max_{t\in\left[0,T\right]}\left|g_0\left(t\right)\right|+\widebar{V}_t^T\left(\omega\right),\quad t\in\left[0,T\right],\,\,\omega\in\Omega_0,
		\]
		i.e., \eqref{bound} holds true choosing $C= \exp \left\{C_1+L\left(\left\{0\right\}\right)C_2+C_3\max_{t\in\left[0,T\right]}\left|g_0\left(t\right)\right|\right\}$.

		The second statement of the theorem immediately follows from \cite[Lemma $1.4$]{J}, as $\left(\exp\left\{\widebar{V}^T\right\}\right)_{t\in\left[0,T\right]}$ is a real--valued martingale. Thus, the proof is complete.
	\end{proof}
	Combining Theorem \ref{bound_t} with Theorem \ref{t6} (see \eqref{final}) we conclude the following uniqueness result.
	\begin{corollary}
		The weak solution $X$ of \eqref{f1} is unique in law in $L^2_\emph{loc}\left(\mathbb{R}_+\right)$, that is: if $Y=\left(Y_t\right)_{t\ge0}$ is another predictable process (defined on a possibly different stochastic basis) such that $Y\ge0,\,\mathbb{P}\otimes \dd t-$a.e., which satisfies \eqref{f1}, then the laws of $X$ and $Y$ on the spaces $L^2\left(\left[0,T\right]\right),\,T>0,$ are the same.
	\end{corollary}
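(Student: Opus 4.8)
The plan is to read off the ``integrated Laplace transform'' of a solution from the exponential--affine formula \eqref{final} of Theorem \ref{t6} evaluated at $t=0$, to exploit the fact that this quantity is a deterministic constant depending on the solution only through the data of \eqref{f1}, and then to recover the law on $L^2\left(\left[0,T\right]\right)$ from this family of transforms by a standard monotone--class argument.

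First I would fix $T>0$ and $f\in C\left(\mathbb{R}_+;\mathbb{C}_-\right)$. In both admissible forms the input curve $g_0$ is locally bounded, hence $g_0\in L^2_\text{loc}\left(\mathbb{R}_+\right)$, so by Lemma \ref{L2} every solution under consideration has trajectories in $L^2_\text{loc}\left(\mathbb{R}_+\right)$, $\mathbb{P}$--a.s.; in particular its restriction to $\left[0,T\right]$ is a bona fide $L^2\left(\left[0,T\right]\right)$--valued random element. Hypothesis \ref{hyp1} holds by Theorem \ref{tadd}~\ref{i1}, and by Theorem \ref{bound_t} the local martingale $\left(\exp\left\{V^T_t\right\}\right)_{t\in\left[0,T\right]}$ built from $X$ is a true martingale, so \eqref{final} is in force. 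Evaluating it at $t=0$, using the triviality of $\mathcal{F}_0$ together with \eqref{thj4.3} and the form \eqref{y2_2} of $V_0^T$, I obtain
\[
\mathbb{E}\left[\exp\left\{\left(f^\top\ast X\right)\left(T\right)\right\}\right]=\exp\left\{\phi\left(T\right)+\int_0^T F\left(T-s,\psi\left(T-s\right)\right)^\top g_0\left(s\right)\,\dd s\right\},
\]
and the right--hand side is completely determined by $T$, $f$, $K$, $g_0$ and the triplet $\left(b,c,\nu\right)$, not by the solution or the underlying stochastic basis. Repeating this reasoning for $Y$ on its own basis --- should that filtration fail the usual conditions or $\mathcal{F}_0$ not be trivial, one first applies \eqref{final} conditionally on $\mathcal{F}_0$, whose value is still the deterministic constant above, and then takes expectations --- gives the identical value for $\mathbb{E}\left[\exp\left\{\left(f^\top\ast Y\right)\left(T\right)\right\}\right]$. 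Since $s\mapsto f\left(T-s\right)$ sweeps out all of $C\left(\left[0,T\right];\mathbb{C}_-\right)$ as $f$ varies over $C\left(\mathbb{R}_+;\mathbb{C}_-\right)$, this yields, with $\langle\cdot,\cdot\rangle$ the inner product of $L^2\left(\left[0,T\right]\right)$,
\[
\mathbb{E}\left[\exp\langle h,X\rangle\right]=\mathbb{E}\left[\exp\langle h,Y\rangle\right]\qquad\text{for every }h\in C\left(\left[0,T\right];\mathbb{C}_-\right).
\]

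Finally I would upgrade this to equality of laws. Because $i\mathbb{R}\subset\mathbb{C}_-$ and a finite real linear combination of functions in $C\left(\left[0,T\right];\mathbb{R}\right)$ is again such a function, taking $h=i\sum_{j=1}^n\lambda_j u_j$ with $\lambda_j\in\mathbb{R}$ and $u_j\in C\left(\left[0,T\right];\mathbb{R}\right)$ shows that the random vectors $\left(\langle u_1,X\rangle,\dots,\langle u_n,X\rangle\right)$ and $\left(\langle u_1,Y\rangle,\dots,\langle u_n,Y\rangle\right)$ have the same characteristic function, hence the same law on $\mathbb{R}^n$, for all $n\in\mathbb{N}$ and all $u_1,\dots,u_n$. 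Since $L^2\left(\left[0,T\right]\right)$ is separable and $C\left(\left[0,T\right]\right)$ is dense in it, the functionals $x\mapsto\langle u,x\rangle$, $u\in C\left(\left[0,T\right]\right)$, generate the Borel $\sigma$--algebra of $L^2\left(\left[0,T\right]\right)$, and the cylinder sets $\left\{x:\left(\langle u_1,x\rangle,\dots,\langle u_n,x\rangle\right)\in B\right\}$, $B\in\mathcal{B}\left(\mathbb{R}^n\right)$, form a $\pi$--system generating it on which the laws of $X$ and $Y$ agree; Dynkin's $\pi$--$\lambda$ theorem then forces these laws to coincide on $L^2\left(\left[0,T\right]\right)$, and since $T>0$ was arbitrary the assertion follows. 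I expect the only point needing a little care to be the richness of the class $C\left(\mathbb{R}_+;\mathbb{C}_-\right)$ of test functions admissible in \eqref{final} --- concretely, that it contains $iu$ for every real continuous $u$ and that the arguments used remain in it under real linear combinations --- while the deterministic nature of $V_0^T$ and the remainder of the argument follow directly from the results already established and from textbook facts about characteristic functionals on separable Hilbert spaces.
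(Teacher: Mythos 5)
Your proof is correct and follows essentially the same strategy as the paper: evaluate the exponential--affine formula at $t=0$ (where the left--hand side is deterministic by triviality of $\mathcal{F}_0$ and the right--hand side depends only on the data $(K,g_0,b,c,\nu,f)$), restrict to purely imaginary test functions to identify characteristic functionals, and then deduce equality of laws on $L^2([0,T])$. The only minor divergence is in the final bookkeeping step: the paper first extends the identity from continuous to all $f\in L^2([0,T])$ by density and dominated convergence and then cites \cite[Proposition 2.5, Chapter 2]{DPZ}, whereas you bypass the $L^2$--extension and apply Dynkin's $\pi$--$\lambda$ theorem directly to the cylinder $\sigma$--algebra generated by continuous functionals; both routes are standard and equivalent, so this is a cosmetic difference rather than a different method.
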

	\begin{proof}
		Fix $T>0$ and consider another weak solution $Y$ of \eqref{f1}. We assume that $X$ and $Y$ are defined on the same stochastic basis to keep notation simple. The paths of $Y$ are in $L^2\left(\left[0,T\right]\right),\,\mathbb{P}-$a.s., by Lemma \ref{L2}. We want to show that
		\begin{equation}\label{char}
			\mathbb{E}\left[\exp\left\{i\int_{0}^{T}f\left(s\right)X_s\,\dd s\right\}\right]=
			\mathbb{E}\left[\exp\left\{i\int_{0}^{T}f\left(s\right)Y_s\,\dd s\right\}\right],\quad f\in L^2\left(\left[0,T\right]\right).
		\end{equation}
		First, we verify the previous equation for $f\in C\left(\left[0,T\right]\right)$. Denoting by $\widetilde{f}\left(s\right)= if\left(T-s\right),\,s\in\left[0,T\right],$ by Theorem \ref{t6} and Theorem \ref{bound_t} we have
		\begin{multline*}
			\mathbb{E}\left[\exp\left\{i\int_{0}^{T}f\left(s\right)X_s\,\dd s\right\}\right]=
			\mathbb{E}\left[\exp\left\{\int_{0}^{T}\widetilde{f}\left(T-s\right)X_s\,\dd s\right\}\right]=\mathbb{E}\left[\exp\left\{V_0^T\right\}\right]\\=
			\mathbb{E}\left[\exp\left\{\int_{0}^{T}\widetilde{f}\left(T-s\right)Y_s\,\dd s\right\}\right]=
			\mathbb{E}\left[\exp\left\{i\int_{0}^{T}f\left(s\right)Y_s\,\dd s\right\}\right]
		\end{multline*}
		where we use the fact that $V_0^T$ in \eqref{y2} does not depend on the solution process, but only on the solution of the Riccati--Volterra equation. Therefore \eqref{char} holds true for continuous functions.  Since $C\left(\left[0,T\right]\right)$ is dense in $L^2\left(\left[0,T\right]\right)$, H\"older's inequality allows to carry out a dominated convergence argument that let us recover \eqref{char} for all $f\in L^2\left(\left[0,T\right]\right)$.
		Hence, the laws of $X$ and $Y$ are the same on the space $L^2\left(\left[0,T\right]\right)$ by, for instance, \cite[Proposition $2.5$, Chapter $2$]{DPZ}. This completes the proof.
	\end{proof}
	
	\appendix 
	\section{The forward process}\label{ap_A}
	Given a kernel $K\in L^2_\text{loc}\left(\mathbb{R}_+;\mathbb{R}^{m\times d}\right),$ we want to find an expression for the forward process $$\mathbb{E}\left[X_T\big|\mathcal{F}_t\right],\quad0\le t\le T,$$ for almost every $T\in\mathbb{R}_+$.
	
	If $b\equiv0$, then \eqref{2} implies
	\[
	X_T=g_0\left(T\right)+\left(K\ast \dd \widetilde{Z}\right)_T=g_0\left(T\right)+\int_{0}^{T}K\left(T-s\right)\dd \widetilde{Z}_s,\quad \mathbb{P}-\text{a.s., for a.e. }T\in\mathbb{R}_+.
	\] 
	By the martingale property ensured by \eqref{1.4} we immediately infer that, for almost every $T\in\mathbb{R}_+$,
	\begin{equation}\label{for_no b}
		\mathbb{E}\left[X_T\big|\mathcal{F}_t\right]=g_0\left(T\right)+\int_{0}^{t}K\left(T-s\right)\dd \widetilde{Z}_s,\quad \mathbb{P}-\text{a.s., }t\in\left[0,T\right].
	\end{equation}
	
	If $b\neq 0$, then we consider $m=d$ and introduce the resolvent of the second kind $R_B$ associated with $-KB$. Note that $R_B\in L^2_{\text{loc}}\left(\mathbb{R}_+;\mathbb{R}^{d\times d}\right)$ by \cite[Theorem $3.5$, Chapter $2$]{g}. 
	Convolving \eqref{2} with $R_B$ and \cite[Theorem $2.2$\,(\lowerRomannumeral{8}), Chapter $2$]{g} yield
	\begin{equation*}
		\left(R_B\ast X\right)\left(T\right)=\left(R_B\ast g_0\right)\left(T\right)+\left(\left(R_B\ast K\right)\ast b\left(X\right)\right)\left(T\right)+\left(R_B\ast \left(K\ast \dd \widetilde{Z}\right)\right)\left(T\right),\quad \text{for a.e. }T\in\mathbb{R}_+,\,\mathbb{P}-\text{a.s.}
	\end{equation*}
	The associativity of the stochastic convolution  proved in Lemma \ref{4.1} (with $\rho=R_B$) and the joint measurability of the processes involved let us rewrite this equality as follows
	\begin{multline}{\label{aft}}
		\left(R_B\ast X\right)\left(T\right)=\left(R_B\ast g_0\right)\left(T\right)+\left(\left(R_B\ast K\right)\ast b_0\right)\left(T\right)
		+\left(\left(R_B\ast KB\right)\ast X\right)\left(T\right)\\+\left(\left(R_B\ast K\right)\ast \dd \widetilde{Z}\right)_T,\quad \mathbb{P}-\text{a.s.},\text{ for a.e. }T\in\mathbb{R}_+.
	\end{multline}
	From the resolvent identity (see the footnote \addtocounter{footnote}{-1}\addtocounter{Hfootnote}{-1}\footnotemark) we have  $R_B\ast KB=KB+R_B$ a.e.  in $\mathbb{R}_+$, so we  rewrite  Equation \eqref{aft} as follows
	\begin{equation}\label{4}
		0=\left(R_B\ast g_0\right)\left(T\right)+\left(\left(R_B\ast K\right)\ast b_0\right)\left(T\right)
		+\left(KB\ast X\right)\left(T\right)+\left(\left(R_B\ast K \right)\ast \dd \widetilde{Z}\right)_T,\quad \mathbb{P}-\text{a.s.},\text{ for a.e. }T\in\mathbb{R}_+.
	\end{equation}
	Let us consider the \emph{canonical resolvent} $E_B= K-R_B\ast K$; subtracting \eqref{4} from \eqref{2} we have
	\begin{equation*}
		X_T=\left(g_0-\left(R_B\ast g_0\right)\right)\left(T\right)+\left(E_B\ast b_0\right)\left(T\right)+\left(E_B\ast \dd \widetilde{Z}\right)_T,\quad \mathbb{P}-\text{a.s.},\text{ for a.e. }T\in\mathbb{R}_+.
	\end{equation*}
	Hence by the martingale property guaranteed by \eqref{1.4} we are able to find an expression for the forward process $\mathbb{E}\left[X_T\big|\mathcal{F}_t\right]$, namely for almost every $T\in\mathbb{R}_+,$ for every $t\in\left[0,T\right]$ it holds
	\begin{equation} \label{forward}
		\mathbb{E}\left[X_T\big|\mathcal{F}_t\right]=
		\left(g_0\left(T\right)-\left(R_B\ast g_0\right)\left(T\right)\right)+
		\left(E_B\ast b_0\right)\left(T\right)
		+\int_{0}^{t}E_B\left(T-s\right)\dd \widetilde{Z}_s, \quad \mathbb{P}-\text{a.s.}
	\end{equation}
	
	Finally, notice that  \eqref{forward} reduces to \eqref{for_no b} as $b\equiv0.$ Indeed, since $E_0=K$ a.e. in $\mathbb{R}_+$ as $R_0=0\left(\in\mathbb{R}^{d\times d}\right)$,  combining  \eqref{2} with Lemma \ref{l1} (see \eqref{2.1}) we have    
	\begin{equation}\label{serve_lei}
		X_T=g_0\left(T\right)+\left(E_0\ast \dd \widetilde{Z}\right)_T,\quad \mathbb{P}-\text{a.s., for a.e. }T\in\mathbb{R}_+,
	\end{equation}
	and the assertion follows by the martingale property.
	\begin{rem}
		Equation \eqref{forward} with $t=0$ implies that $\mathbb{E}\left[X_T\right]=\left(g_0-\left(R_B\ast g_0\right)\right)\left(T\right)+\left(E_B\ast b_0\right)\left(T\right)$ for a.e. $T\in\mathbb{R}_+$. This result can be confirmed with a direct method. Specifically, by \eqref{1.4} and Tonelli's theorem the function $\mathbb{E}\left[\left|X_\cdot\right|\right]\in L^1_\emph{loc}\left(\mathbb{R}_+;\mathbb{R}\right)$. Hence taking expectations in \eqref{f} we obtain, by Fubini's theorem, 
		\begin{equation*}
			\mathbb{E}\left[X_T\right]=
			\left(g_0+K\ast b_0\right)\left(T\right)+\left(KB\ast \mathbb{E}\left[X_\cdot\right]\right)\left(T\right),\quad  \text{for  a.e.  }T\in\mathbb{R}_+,
		\end{equation*}
		i.e., $\mathbb{E}\left[X_\cdot\right]+\left(\left(-KB\right)\ast\mathbb{E}\left[X_\cdot\right]\right)=g_0+K\ast b_0$ a.e. in $\mathbb{R}_+$.
		An application of the variation of constants formula in \cite[Theorem $3.5$, Chapter $2$]{g} let us conclude
		\[
		\mathbb{E}\left[X_T\right]=\left(g_0-\left(R_B\ast g_0\right)+\left(E_B\ast b_0\right)\right)\left(T\right),\quad \text{for a.e. }T\in\mathbb{R}_+,
		\] 
		as desired.
	\end{rem}
	\section{On the {$1-$}dimensional, deterministic Riccati--Volterra equation}\label{ap_B}
	Here we focus on the Riccati--Volterra equation used in Section \ref{sec:example}, i.e., \eqref{Vp} with
	\begin{equation}\label{F_ex}
		F\left(t,u\right)= f\left(t\right)+bu+\frac{c}{2}u^2+\int_{\mathbb{R}_+}\left(e^{u\xi}-1-u\xi\right)\nu\left(\dd\xi\right),\quad \left(t,u\right)\in \mathbb{R}_+\times\mathbb{C}_-,
	\end{equation}
	where $f\in C\left(\mathbb{R}_+;\mathbb{C}_-\right).$
	Throughout the section, we require Condition \ref{c1} on the kernel $K$. 
	\subsection{Existence of a global solution}\label{B1}
	It is easy to argue that \eqref{Vp} admits a continuous, noncontinuable solution $\psi$, with $\mathfrak{R}\psi\le0$, defined on the maximal interval 
	$\left[0,T_{\text{max}}\right)$ (see \cite[Theorem $2.5$, Step $1$]{edu}). We are concerned with showing that $T_\text{max}=\infty$, i.e., that $\psi$ 	does not explode in finite time (cfr. \cite[Theorem $1.1$, Chapter $12$]{g}). Let us fix a generic $T\in\left(0,T_\text{max}\right)$; taking real and imaginary parts in \eqref{Vp} and \eqref{F_ex} we have, on the interval $\left[0,T\right]$,
	\begin{align}
		\label{real}
		&\mathfrak{R}\psi=
		K\ast \left[\mathfrak{R}f+b\,\mathfrak{R}\psi+\frac{c}{2}\left(\left|\mathfrak{R}\psi\right|^2-\left|\mathfrak{Im}\psi\right|^2\right)+
		\int_{\mathbb{R}_+}
		\left(\cos\left(\mathfrak{Im}\psi\cdot \xi\right)e^{\mathfrak{R}\psi\cdot \xi}-1-\mathfrak{R}\psi\cdot \xi\right)\nu\left(\dd \xi\right)
		\right],\\
		\label{im}&
		\mathfrak{Im}\psi=
		K\ast
		\left[\mathfrak{Im}f+b\,\mathfrak{Im}\psi+c\,\mathfrak{R}\psi\,\mathfrak{Im}\psi
		+
		\int_{\mathbb{R}_+}\left(\sin\left(\mathfrak{Im}\psi\cdot \xi\right)e^{\mathfrak{R}\psi\cdot \xi}-\mathfrak{Im}\psi\cdot  \xi\right)\nu\left(\dd \xi\right)
		\right].
	\end{align}
	First we study the imaginary part. In particular, we consider the function $h\colon \mathbb{R}_-\times \mathbb{R}\to \mathbb{R}$ defined as follows
	\begin{equation*}
		h\left(x,y\right)=\begin{cases}
			\frac{1}{y}\int_{\mathbb{R}_+}\left(\sin\left(y\, \xi\right)-y\, \xi\right)e^{x\cdot \xi}\nu\left(\dd \xi\right),&y\neq0\\
			0,&y=0
		\end{cases},\quad x\le 0.
	\end{equation*}
	Note that $h$ is continuous and non--positive in its domain. By construction 
	\[
	y\cdot h\left(x,y\right)=\int_{\mathbb{R}_+}\left(\sin\left(y\, \xi\right)-y\, \xi\right)e^{x\cdot \xi}\nu\left(\dd \xi\right),\quad \left(x,y\right)\in\mathbb{R}_-\times \mathbb{R}.
	\]
	Hence we can use this function to rewrite \eqref{im} in the following form
	\begin{align*}
		\mathfrak{Im}\psi&=
		K\ast
		\left[\mathfrak{Im}f+b\,\mathfrak{Im}\psi+c\,\mathfrak{R}\psi\,\mathfrak{Im}\psi
		+
		\left(	\int_{\mathbb{R}_+}\xi\left(e^{\mathfrak{R}\psi\cdot \xi}-1\right)\nu\left(\dd \xi\right)\right)\mathfrak{Im}\psi
		+
		h\left(\mathfrak{R}\psi,\mathfrak{Im}\psi\right)\mathfrak{Im}\psi
		\right],
	\end{align*}
	which holds true on $\left[0,T\right]$. Let us consider the unique, continuous, non--negative solution on $\left[0,T\right]$ of the linear equation 
	\[
	g=K\ast\left[
	\left|\mathfrak{Im}\,f\right|+b\,g+\left(c\,\mathfrak{R}\psi+
	\int_{\mathbb{R}_+}\xi\left(e^{\mathfrak{R}\psi\cdot \xi}-1\right)\nu\left(\dd\xi\right)+
	h\left(\mathfrak{R}\psi,\mathfrak{Im}\psi\right)
	\right)g
	\right].
	\]
	By Condition \ref{c1}, we can invoke \cite[Theorem C.$1$]{ee} to deduce that $\left|\mathfrak{Im}\psi\right|\le g$ on $\left[0,T\right]$. Next we introduce $u$, the unique, continuous  solution of the linear equation 
	\[
	u=K\ast\left[\left|\mathfrak{Im}f\right|+b\,u\right].
	\]
	Notice that $u$ is defined on $\mathbb{R}_+$, and that $g\le u$ on $\left[0,T\right]$ (again by \cite[Theorem C.$1$]{ee}), as in this interval one has
	\[
	c\,\mathfrak{R}\psi 
	+
	\int_{\mathbb{R}_+}\xi 
	\left(e^{\mathfrak{R}\psi\cdot\xi}-1\right)\nu\left(\dd\xi\right)+
	h\left(\mathfrak{R}\psi,\mathfrak{Im}\psi\right)\le 0.
	\]
	Therefore we have obtained the bound
	\begin{equation}\label{bound_im}
		\left|\mathfrak{Im}\psi\left(t\right)\right|\le u\left(t\right),\quad 0\le t\le T.
	\end{equation}
	
	Secondly,  Equation \eqref{real} ensures that $\mathfrak{R}\psi$ satisfies 
	\begin{multline*}
		\mathfrak{R}\psi
		=
		K\ast 
		\Bigg[
		\mathfrak{R}\,f+b\,\mathfrak{R}\psi+\frac{c}{2}\left(\left|\mathfrak{R}\psi\right|^2-\left|\mathfrak{Im}\psi\right|^2\right)
		+
		\int_{\mathbb{R}_+}\left(e^{\mathfrak{R}\psi\cdot \xi}-1-\mathfrak{R}\psi\cdot \xi\right)\nu\left(\dd \xi\right)
		\\-
		\left|	\int_{\mathbb{R}_+}e^{\mathfrak{R}\psi\cdot \xi}\left(\cos\left(\mathfrak{Im}\psi\cdot \xi\right)-1\right)\nu\left(\dd \xi\right)\right|
		\Bigg]
	\end{multline*} 
	on $\left[0,T\right]$. Since $\left|\cos\left(x\right)-1\right|=1-\cos \left(x\right)\le{x^2}/{2},\,x\in\mathbb{R}$, we have (also recalling \eqref{bound_im})
	\begin{equation}\label{b_1}
		\left|	\int_{\mathbb{R}_+}e^{\mathfrak{R}\psi\cdot \xi}\left(\cos\left(\mathfrak{Im}\psi\cdot \xi\right)-1\right)\nu\left(\dd \xi\right)\right|\le \frac{1}{2}\left(\int_{\mathbb{R}_+} \left|\xi\right|^2\nu\left(\dd\xi\right)\right)\left|\mathfrak{Im}\psi\right|^2\le \frac{1}{2}\left(\int_{\mathbb{R}_+} \left|\xi\right|^2\nu\left(\dd \xi\right)\right)u^2,\quad \text{on }\left[0,T\right].
	\end{equation}
	This suggests to introduce the linear equation
	\[
	l=K\ast\left[\mathfrak{R}\,f+b\,l-\left(\frac{c}{2}+\frac{1}{2}\int_{\mathbb{R}_+}\left|\xi\right|^2\nu\left(\dd\xi\right)\right)u^2\right],
	\]
	which has a unique, continuous, non--positive solution $l$ defined on the whole $\mathbb{R}_+.$ At this point, observe that the difference $\mathfrak{R}\psi-l$ satisfies the linear equation
	\begin{multline*}
		\chi=K\ast
		\Bigg[b\,\chi+\frac{c}{2}\left|\mathfrak{R}\psi\right|^2+\frac{c}{2}\left(u^2-\left|\mathfrak{Im}\psi\right|^2
		\right)+
		\int_{\mathbb{R}_+}\left(e^{\mathfrak{R}\psi\cdot \xi}-1-\mathfrak{R}\psi\cdot \xi\right)\nu\left(\dd \xi\right)\\
		+\left(\frac{1}{2}\left(\int_{\mathbb{R}_+} \left|\xi\right|^2\nu\left(\dd \xi\right)\right)\,u^2-	\left|\int_{\mathbb{R}_+}e^{\mathfrak{R}\psi\cdot \xi}\left(\cos\left(\mathfrak{Im}\psi\cdot \xi\right)-1\right)\nu\left(\dd \xi\right)\right|\right)
		\Bigg].
	\end{multline*}
	It admits a unique, continuous solution on $\left[0,T\right]$ which is non--negative by \eqref{bound_im}, \eqref{b_1} and the fact that $e^x-1-x\ge0,\,x\in\mathbb{R}.$ Since $T\in\left(0,T_{\text{max}}\right)$ was chosen arbitrarily, we conclude that
	\[
	l\left(t\right)\le \mathfrak{R}\psi\left(t\right)\le0\text{\quad and \quad  }\left|\mathfrak{Im}\psi\left(t\right)\right|\le u\left(t\right),\quad 0\le t< T_\text{max}.
	\]
	Now recalling that $l$ and $u$ are continuous in $\mathbb{R}_+$, so they are bounded in every compact interval, we conclude that $T_\text{max}=\infty$, as desired.
	
	Finally we notice that if $f$ takes values in $\mathbb{R}_-$, then from \eqref{bound_im} we deduce that any solution of \eqref{Vp} is real--valued, as well. In particular, $\widebar{\psi}$ in \eqref{Vp_real} is $\mathbb{R}_--$valued.
	\subsection{A comparison result}\label{B2}
	The goal of this appendix is to prove the inequality \eqref{le} in Theorem \ref{tadd} \ref{i2}, which is of utmost importance for the argument in Section \ref{sec:example}. Precisely, we want to show that
	\begin{equation*}
		\mathfrak{R}\psi\left(t\right)\le\widebar{\psi}\left(t\right),\quad t\ge0,
	\end{equation*}
where $\psi\in C\left(\mathbb{R}_+;\mathbb{C}_-\right)$ and $\widebar{\psi}\in C\left(\mathbb{R}_+;\mathbb{R}_-\right)$  satisfy \eqref{Vp} and \eqref{Vp_real}, respectively. 
	Direct computations based on the definitions in \eqref{F_ex} and \eqref{Fbar} show that, for every $u\in\mathbb{C}_-$ and $t\ge0$,
	\begin{align*}
		\mathfrak{R}\left(F\left(t,u\right)\right)&
		=\mathfrak{R}f\left(t\right)+b\mathfrak{R}\left(u\right)
		+\frac{c}{2}\left(\left|\mathfrak{R}\left(u\right)\right|^2-\left|\mathfrak{Im}\left(u\right)\right|^2\right)
		+\int_{\mathbb{R}_+}\left(\cos\left(\mathfrak{Im}\left(u\right)\xi\right)e^{\mathfrak{R}\left(u\right)\xi}-1-\mathfrak{R}\left(u\right)\xi\right)\nu\left(\dd\xi\right)\\&
		\le \mathfrak{R}{f}\left(t\right)+b\mathfrak{R}\left(u\right)+\frac{c}{2}\left|\mathfrak{R}\left(u\right)\right|^2
		+\int_{\mathbb{R}_+}\left(e^{\mathfrak{R}\left(u\right)\xi}-1-\mathfrak{R}\left(u\right)\xi\right)\nu\left(\dd\xi\right)
		=\widebar{F}\left(t,\mathfrak{R}\left(u\right)\right).
	\end{align*}
	Summarizing, 
	\begin{equation}\label{est}
		\mathfrak{R}\left(F\left(t,u\right)\right)\le\widebar{F}\left(t,\mathfrak{R}\left(u\right)\right),\quad u\in\mathbb{C}_-,\,t\ge0.
	\end{equation}
	Then taking the real parts in \eqref{Vp} and recalling that --under Condition \ref{c1}-- the kernel $K$ is nonnegative on $\left(0,\infty\right)$, we obtain
	\begin{equation*}
		\mathfrak{R}\left(\psi\left(t\right)\right)\le\int_0^t K\left(t-s\right)\widebar{F}\left(s,\mathfrak{R}\left(\psi\left(s\right)\right)\right)\dd s,\quad t\ge0.
	\end{equation*}
	Therefore we can introduce a nonnegative function $\gamma\colon\mathbb{R}_+\to\mathbb{R}_+$ defined by the  relation
	\begin{equation}\label{j1}
		\mathfrak{R}\left(\psi\left(t\right)\right)=-\gamma\left(t\right)+\int_0^t K\left(t-s\right)\widebar{F}\left(s,\mathfrak{R}\left(\psi\left(s\right)\right)\right)\dd s,\quad t\ge0;
	\end{equation}
	we immediately note that, using \eqref{Vp}, one can rewrite $\gamma$ as follows
	\begin{equation}\label{j}
		\gamma\left(t\right)=\int_{0}^{t}K\left(t-s\right)\left(\widebar{F}\left(s,\mathfrak{R}\psi\left(s\right)\right)-\mathfrak{R}\left(F\left(s,\psi\left(s\right)\right)\right)\right)	\dd s,\quad t\ge0.
	\end{equation}
	For a generic map $g\colon\mathbb{R}_+\to \mathbb{R}$ consider the condition 
	\begin{equation}\label{B.3}
		\Delta_hg-\left(\Delta_hK\ast L\right)\left(0\right)g-\dd\left(\Delta_hK\ast L\right)\ast g\ge0,\quad h\ge0;
	\end{equation}
	we denote by $\mathcal{G}_K= \left\{g\colon\mathbb{R}_+\to \mathbb{R}\text{ s.t. }g\text{ is continuous, satisfies }\eqref{B.3}\text{ and }g\left(0\right)\ge0\right\}$ the set of admissible curves, see \cite[Condition B.$3$]{ee}, and also \cite[Equations $\left(2.14\right)$-$\left(2.15\right)$]{edu}. By \cite[Remark B.$6$]{ee} and \eqref{j} we infer that $\gamma\in\mathcal{G}_K$.
	
	At this point we subtract \eqref{j1} from \eqref{Vp_real} to deduce, calling $\delta=\widebar{\psi}-\mathfrak{R}\psi$, that
	\begin{equation}\label{delta}
		\delta\left(t\right)=\gamma\left(t\right)+\int_{0}^t K\left(t-s\right)\left(\widebar{F}\left(s,\widebar{\psi}\left(s\right)\right)
		-	\widebar{F}\left(s,\mathfrak{R}\left(\psi\left(s\right)\right)\right)
		\right)\dd s,\quad t\ge0.
	\end{equation}
	We then need to study the increments of $\widebar{F}$ in the second variable. Namely, fix $u_1,u_2\in\mathbb{R}_-$ and use the definition \eqref{Fbar} to write
	\begin{multline*}
		\widebar{F}\left(t,u_1\right)-\widebar{F}\left(t,u_2\right)=b\left(u_1-u_2\right)+\frac{c}{2}\left(u_1^2-u_2^2\right)
		+\int_{\mathbb{R}_+}\left(e^{u_1\xi}
		-	e^{u_2\xi}
		-\left(u_1-u_2\right)\xi\right)\nu\left(\dd\xi\right)	\\
		=\left[b+\frac{c}{2}\left(u_1+u_2\right)\right]\left(u_1-u_2\right)+\int_{\mathbb{R}_+}\left(e^{u_1\xi}
		-	e^{u_2\xi}
		-\left(u_1-u_2\right)\xi\right)\nu\left(\dd\xi\right),\quad t\ge0.
	\end{multline*}
	Hence substituting $\widebar{\psi}$ and  $\mathfrak{R}\psi$ to $u_1$ and $u_2$, respectively, we have
	\begin{equation*}
		\widebar{F}\left(t,\widebar{\psi}\left(t\right)\right)-\widebar{F}\left(t,\mathfrak{R}\left(\psi\left(t\right)\right)\right)
		=\underbrace{\left[b+\frac{c}{2}\left(\widebar{\psi}\left(t\right)+\mathfrak{R}\left(\psi\left(t\right)\right)\right)\right]}_{= {z}\left(t\right)}\delta\left(t\right)+\underbrace{\int_{\mathbb{R}_+}\left(e^{\widebar{\psi}\left(t\right)\xi}
			-e^{\mathfrak{R}\psi\left(t\right)\xi}
			-\delta\left(t\right)\xi\right)\nu\left(\dd\xi\right)}_{= w\left(t\right)}
	\end{equation*}
	for $t\ge 0$. Going back to \eqref{delta}, 
	\begin{equation}\label{delta1}
		\delta\left(t\right)=\gamma\left(t\right)+\int_{0}^tK\left(t-s\right)\left({z}\left(s\right)\delta\left(s\right)+w\left(s\right)\right)\dd s,\quad t\ge0.
	\end{equation}
	We aim to apply \cite[Theorem C.$1$]{ee} in order to conclude $\delta\ge0$ in $\mathbb{R}_+$.
	\begin{itemize}
		\item In the continuous case the integral in $\nu\left(\dd\xi\right)$, i.e., the function $w$, simply disappears, hence the application of \cite[Theorem C.$1$]{ee} is straightforward. 
		\item In the jump case we need to deal with such an integral. Observe that the function $w$ has opposite sign with respect to $\delta$, so there is no hope of applying  \cite[Theorem C.$1$]{ee} unless we modify its expression. Fortunately this can be done using the {mean value theorem}, in combination with simple real--analysis arguments. 
		
		First, for every $\xi >0$ we define $f_{\xi}\left(u\right)= e^{\xi u},\,u\in\mathbb{R},$ so $f_{\xi}'\left(u\right)=\xi e^{\xi u}$. Observe that the derivative $f'_{\xi}$ is continuous and strictly increasing in $\mathbb{R}$, hence its inverse $h_{\xi}= \left(f_\xi'\right)^{-1}$ is continuous on $\left(0,\infty\right)$, as well. By the mean value theorem, for every $u_1,u_2\in\mathbb{R}$ there exists $c_{\xi}\in\left[u_1\wedge u_2, u_1\vee u_2\right]$ such that
		\[
		f_{\xi}\left(u_2\right)-f_{\xi}\left(u_1\right)=f_\xi'\left(c_\xi\right)\left(u_2-u_1\right).
		\] 
		In particular $c_{\xi}\in \left(u_1\wedge u_2, u_1\vee u_2\right)$ when $u_1\neq u_2.$\\
		Secondly, we consider the functions $\widebar{\psi}$ and  $\mathfrak{R}\psi$, and we can say that for every $t\in\mathbb{R}_+$ there exists $c_{\xi}\left( t\right)\in \left[\widebar{\psi}\left(t\right)\wedge\mathfrak{R}\psi\left(t\right),\widebar{\psi}\left(t\right)\vee\mathfrak{R}\psi\left(t\right)\right] $ (in the interior of such interval whenever $\widebar{\psi}\neq\mathfrak{R}\psi$, i.e., whenever $\delta\neq0$) such that 
		\begin{equation}\label{8}
			e^{\xi\widebar{\psi}\left(t\right)}-e^{\xi\mathfrak{R}\psi\left(t\right)}=f'_{\xi}\left(c_\xi\left(t\right)\right)\left(\widebar{\psi}\left(t\right)-\mathfrak{R}\psi\left(t\right)\right)=\xi e^{\xi c_{\xi}\left(t\right)}\delta\left(t\right).
		\end{equation}
		By the axiom of choice we construct the function $c\colon\mathbb{R}_+\times\mathbb{R}_+\to\mathbb{R}_-$ defined by 
		\[
		c\left(\xi,t\right)= c_{\xi}\left(t\right),\quad \xi>0,\,t\ge0,
		\]
		and $c\left(0,t\right)= 0,\,t\ge0.$ Note that the codomain of $c\left(\cdot,\cdot\right)$ is $\mathbb{R}_-$ since both $\widebar{\psi}$ and $\mathfrak{R}\psi$ take values there. Recalling the definition of $w$ and using \eqref{8} we can write
		\[
		w\left(t\right)=
		\int_{\mathbb{R}_+}\left[e^{\widebar{\psi}\left(t\right)\xi}
		-e^{\mathfrak{R}\left(\psi\left(t\right)\right)\xi}
		-\delta\left(t\right)\xi\right]\nu\left(\dd\xi\right)=
		{\color{black}{\underbrace{\left(\int_{\mathbb{R}_+}\xi\left[e^{\xi c\left(\xi,t\right)}-1\right]\nu\left(\dd\xi\right)\right)}_{= \widebar{w}\left(t\right)}}}\,\,\delta\left(t\right),\quad t\ge0.
		\]
		Now we have to prove that $\widebar{w}$ is continuous on $\mathbb{R}_+$. The first step is to show the continuity of the function $c\left(\xi,\cdot\right)$ in $\mathbb{R}_+$ for every fixed $\xi\in \mathbb{R}_+.$ It is of course trivial for $\xi=0$, so we just focus on $\xi>0$. If $\widebar{t}\in\left(0,\infty\right)$ such that $\delta\left(\widebar{t}\right)>0$, then we can find $\epsilon>0$ such that $\delta>0$ in $\left(\widebar{t}-\epsilon,\widebar{t}+\epsilon\right)$. Hence we use \eqref{8} to prove that
		\[
		c\left(\xi,t\right)=h_{\xi}\left(\frac{e^{\xi\widebar{\psi}\left(t\right)}-e^{\xi\mathfrak{R}\psi\left(t\right)}}{\delta\left(t\right)}\right),\quad t\in\left(\widebar{t}-\epsilon,\widebar{t}+\epsilon\right),
		\]
		recalling that $h_{\xi}=\left(f_{\xi}'\right)^{-1}$. So $c\left(\xi,\cdot\right)$ is continuous in the points $\widebar{t}\in\left(0,\infty\right)$ where $\delta\left(\widebar{t}\right)>0$. An analogous reasoning shows the continuity in the points where $\delta<0.$ Let us now consider  $\widebar{t}\in\mathbb{R}_+$ a zero for $\delta$, i.e., $\delta\left(\widebar{t}\right)=0$. For every sequence $\left(t_n\right)_n\subset\mathbb{R}_+$ such that $t_n\to \widebar{t}$ as $n\to \infty$ one has, by construction,
		\[
		\widebar{\psi}\left(t_n\right)\wedge\mathfrak{R}\psi\left(t_n\right)\le c\left(\xi,t_n\right)\le \widebar{\psi}\left(t_n\right)\vee\mathfrak{R}\psi\left(t_n\right),\quad n \in\mathbb{N}.
		\]
		Therefore an application of the {squeeze theorem} gives
		\[
		\lim_{n\to\infty}c\left(\xi,t_n\right)=
		\mathfrak{R}\psi\left(\widebar{t}\right)=\widebar{\psi}\left(\widebar{t}\right)=c\left(\xi,\widebar{t}\right).
		\]
		
		At this point we deduce the continuity of the function $\widebar{w}$ using the dominated convergence theorem. Indeed, take $t\in\mathbb{R}_+$, a sequence $t_n\to t$, and define $g_{\left(n\right)}\left(\xi\right)=\xi\left[e^{\xi c\left(\xi,t_{\left(n\right)}\right)}-1\right],\,\xi\in\mathbb{R}_+$. Then $g_n\to g$ pointwise in $\mathbb{R}_+$ by the continuity of $c\left(\xi,\cdot\right)$ and, for a certain $C>0$ s.t. $t_n\le C, \,n\in\mathbb{N}$ (which exists since $\left(t_n\right)_n$ is bounded), we have 
		\begin{multline*}
			\left|\xi\left[e^{\xi c\left(\xi,t_n\right)}-1\right]\right|
			=\xi\left[1-e^{\xi c\left(\xi,t_n\right)}\right]\le\xi\left[1-e^{\xi \min_{0\le s\le C}c\left(\xi,s\right)}\right]\\
			\le\xi\left[1-e^{\xi \min_{0\le s\le C}\left\{\left(\mathfrak{R}\psi\wedge\widebar{\psi}\right)\left(s\right)\right\}}\right]\in L^1\left(\dd\nu\right),\quad n\in\mathbb{N}.
		\end{multline*}
		Therefore we can rewrite \eqref{delta1} as follows
		\[
		\delta\left(t\right)=\gamma\left(t\right)+\int_{0}^tK\left(t-s\right)\left({z}\left(s\right)+\widebar{w}\left(s\right)\right)\delta\left(s\right)\dd s,\quad t\ge0,
		\]
		and we invoke \cite[Theorem C.$1$]{ee} to assert that $\delta\ge0,$ i.e., that \eqref{le} holds true.
	\end{itemize}

\end{document}